\pgfplotsset{compat=1.18}
\def\centerarc[#1](#2)(#3:#4:#5)
\newtheorem{thm}{Theorem}[section]
\newtheorem{cor}[thm]{Corollary}
\newtheorem{prop}[thm]{Proposition}
\newtheorem{lem}[thm]{Lemma}
\newtheorem{thmm}{Theorem}
\theoremstyle{definition}
\newtheorem{definition}[thm]{Definition}
\theoremstyle{definition}
\newtheorem{rem}[thm]{Remark}
\newcommand{\R}{\mathbb{R}}
\newcommand{\Z}{\mathbb{Z}}
\newcommand{\C}{\mathbb{C}}
\newcommand{\N}{\mathbb{N}}
\newcommand{\RS}{\widehat{\mathbb{C}}}
\newcommand{\Ree}{\operatorname{Re}}
\newcommand{\Imm}{\operatorname{Im}}
\newcommand{\E}{E_\lambda}
\newcommand{\Orb}{\operatorname{Orb}}
\DeclareMathOperator{\Ind}{Ind}
\DeclareMathOperator{\ind}{ind}
\DeclareMathOperator{\intt}{int}
\begin{document}

\title{Indecomposable continua for unbounded itineraries of exponential maps}
\author{Radosław Opoka}
\address{University of Warsaw, ul. Banacha 2, 02-097 Warsaw, Poland}
\email{r.opoka@student.uw.edu.pl}

\begin{abstract}
We study the dynamics of the exponential maps $\E: \C \longrightarrow \C$ defined by $\E(z) = \lambda e^z$, where $\lambda > \frac{1}{e}$. We prove that for itineraries of a certain form, the set of all points sharing the given itinerary, together with the point at infinity, is an indecomposable continuum in the Riemann sphere. These itineraries contain infinitely many blocks of zeros whose lengths increase, and they may be unbounded. 
We prove that in every such continuum, there exists exactly one point whose $\omega$-limit set contains the repelling fixed point of $\E$. For every other point, the $\omega$-limit set is equal either to the point at infinity, or to the forward orbit of $0$ together with the point at infinity. Thus, we generalize the results of R. Devaney and X. Jarque concerning indecomposable continua for bounded itineraries.
\end{abstract}

\maketitle

\section{Introduction}

In this paper we study the iterations of entire functions $\E:\C\longrightarrow\C$ defined by
$$\E(z) = \lambda e^z\textrm{, }\quad\lambda\in\C\setminus\{0\}.$$

The dynamical plane $\C$ can be naturally partitioned into two
disjoint invariant sets: the Fatou set and the Julia set (see \cite{CG, MIL}). A point $z \in \C$ is in the Fatou set of some holomorphic function ${f:\C\longrightarrow\RS}$, if the family $\{f^n\}_{n=1}^\infty$ is defined and normal in the sense of Montel in some neighbourhood of $z$ (i.e.~every sequence of maps from this family has a subsequence converging almost uniformly to a holomorphic map or infinity). This set contains points with stable dynamics in some sense. The Julia set is the complement of the Fatou set. Points in the Julia set exhibit chaotic behaviour. Various results regarding the dynamics of entire maps can be found in \cite{BER}.

To study the dynamics of $\E$ we use symbolic dynamics. We divide the plane into countably many disjoint horizontal strips $R_j$ of height $2\pi$:
$$R_j=\{z\in\mathbb{C}: (2j-1)\pi < \operatorname{Im}(z) \leq (2j+1)\pi\}$$
for $j\in\mathbb{Z}$.
Then we assign a sequence of integers $S(z)=(s_0s_1\ldots)$ to each point $z\in\C$ in such a way that $s_j = k$ if and only if $\E^j(z)\in R_k$. The sequence $S(z)$ is called an itinerary or external address of $z$. 
We say that an itinerary $s=(s_0s_1\ldots)\in\Z^{\N}$ is bounded if there exists $M>0$ such that $|s_k| < M$ for every $k\in\N$. Otherwise, we say that $s$ is unbounded. We are interested in the properties of the sets
$$I_s = \{z\in\mathbb{C} : S(z) = s\},$$
where $s\in\Z^{\N}$.
The answer to the question for which $s$ the set $I_s$ is non-empty was provided by R.~Devaney and M. Krych in \cite{DK} for $\lambda>0$ and by D. Schleicher and J. Zimmer in \cite{SZ} for $\lambda\in\C\setminus\{0\}$. They showed that $I_s$ is non-empty if and only if the itinerary $s$ is exponentially bounded, which means, using the notation from \cite{SZ}, that if $s=(s_0s_1\ldots)$ then
\begin{equation}\label{eq:Expo bounded}
    \exists_{A,x>0} \forall_{k\in\N}\hspace{1mm}|s_k|<AF^k(x),
\end{equation}
where the map $F:\R\longrightarrow \R$ is defined as 
\begin{equation}\label{eq:F}
   F(t) = e^t - 1. 
\end{equation}

In \cite{DK} the structure of the Julia set of $\E$ was studied.
In the case $\lambda \in (0, \frac{1}{e})$ the Julia set of $E_\lambda$ consists of uncountably many disjoint curves. 
Each such curve is a subset of $I_s$ for some exponentially bounded itinerary $s$, is homeomorphic to a half-line $[0,\infty)$ and tends to the point
at infinity. We call these curves, without the endpoints (the point corresponding to the initial point of the half-line), hairs or external rays. All points of the hair are not accessible by curves contained in the complement of the Julia set (see \cite{DG}). In the case of $\lambda \in (0, \frac{1}{e})$ the unique singular value $0$ is contained in the basin of an attracting fixed point.

Some generalizations of the results about the existence of hairs are shown in \cite{6A}.
The existence of hairs for $\E$, $\lambda\in\C\setminus\{0\}$, is shown in \cite{SZ}. In that paper D. Schleicher and J. Zimmer proved that any escaping point i.e. $z\in\C$ such that $\E^n(z)\xrightarrow{n\to\infty}\infty$, is an element of the unique hair, or is a landing point (endpoint) of the unique hair, or maps after finitely many iterations on the hair on which the singular value 0 lies. The third case only occurs when the singular value escapes.
Moreover, the existence of similar hairs is a phenomenon that occurs in a broader context, not only for exponential maps (see \cite{KB, RRRS}). 
M. Viana showed that the hairs for the complex exponential family are $C^\infty$-smooth curves (see \cite{VIA}). 
On the other hand, in \cite{COM}, P. Comdühr showed an example of an entire map with the Julia set consisting of hairs, which are nowhere-differentiable curves.

We obtain different dynamics if we consider the situation when the singular value $0$ tends to the point at infinity, which happens e.g.~for $\lambda \in (\frac{1}{e}, \infty)$. In this case the Julia set is equal to the entire plane $\C$ (see \cite{D, MIS}). In \cite{D} R. Devaney proved that for $\lambda>\frac{1}{e}$ there is a natural compactification of the set $\{z\in\C: \forall_{n\in\N}\quad 0\leqslant\Imm(\E^n(z))\leqslant\pi\}$, which is an indecomposable continuum (an indecomposable continuum is a continuum that cannot be written as the union of two proper subcontinua). This set is a subset of $I_s$ for $s=(000\ldots)$.
In \cite{DJ} R. Devaney and X. Jarque analyse itineraries of the form 
$$s = 0_{n_0}t_{0}0_{n_1}t_{1}0_{n_2}t_{2}\ldots,$$
where $t_{0}, t_{1}, \ldots$ is a sequence of blocks of integers in which at least one term of every block is non-zero, and $0_{n_k}$ is a block of zeros of length $k$. By a block we mean a finite sequence of symbols. They proved that if $\lambda>\frac{1}{e}$ and $s$ is a bounded itinerary of the above form, then there exists a sequence of natural numbers $(n_q)_{q\in\N}$, such that the set $I_s\cup\{\infty\}$ is an indecomposable continuum in the Riemann sphere. 
Moreover, 
$$I_s\cup\{\infty\} = \overline{\gamma_s}\cup\{\infty\} = A(\gamma_s),$$ where $\gamma_s:(0,+\infty)\longrightarrow\C$ is a curve such that $\lim_{\eta\to +\infty}\gamma_s(\eta) = \infty$, consisting of escaping points, and $A(\gamma)$ is the accumulation set of the curve $\gamma:(0,+\infty)\longrightarrow\C$, which is the set of all possible limits $\lim_{\eta_n\to 0}\gamma(\eta_n)$. We also call $\gamma_s$ a hair. The above phenomenon arises when a hair in the Julia set becomes so entangled that it accumulates everywhere on itself. 
R. Devaney and X. Jarque in \cite{DJ} proved also that if $\lambda>\frac{1}{e}$ and $s$ is a bounded itinerary then there exists exactly one element $z_s$ of $I_s$ with a bounded orbit, where a (forward) orbit of $z$ under some function $f$ is a set $\Orb(z)=\{z,f(z),f^2(z),\ldots\}$. For every other point $z$ in $I_s$ one of the two occurs:
\begin{itemize}
    \item $z\in\gamma_s$, then $\E^n(z)\to\infty$ with $n\to +\infty$;
    \item $z\in I_s\setminus(\gamma_s\cup z_s)$, then the $\omega$-limit set of $z$ is equal to $\Orb(0)\cup\{\infty\}$.
\end{itemize}
The set of elements of $I_s$ satisfying the second condition may be empty. In this case, the hair $\gamma_s$ lands at $z_s$ ($z_s$ is an endpoint of $\gamma_s$) and the closure of the hair with the point at infinity cannot be an indecomposable continuum.

The existence of indecomposable continua was proved in many other cases.
For $\E$ indecomposable continua occur in the Misiurewicz case, that is when the singular value $0$ has a finite trajectory under iteration, e.g. for $\lambda=2\pi i$ (see \cite{DJR}). In \cite{REM} L. Rempe showed that, if $\E$ is an exponential map whose singular value $0$ is on a hair or is the landing point (endpoint) of such hair, then there exists a hair $\gamma$ whose accumulation set (in
the Riemann sphere) is an indecomposable continuum containing $\gamma$. In the case of $\lambda >\frac{1}{e}$ constructed itineraries in the proof of Rempe's theorem are again bounded, as in \cite{DJ}.
In \cite{PZ} Ł. Pawelec and A. Zdunik showed that the Hausdorff dimension of each continuum constructed in \cite{DJ, DJR, REM} is equal to 1. In \cite{FZ} J. Fu and G. Zhang proved the existence of hairs with a bounded accumulation set, which can be either an indecomposable continuum or a Jordan arc. Such accumulation set is a subset of $I_s$ for some bounded $s$. The construction of such hairs is carried out for $\lambda = 2\pi i$. Note that the itinerary is defined differently in \cite{DJR, FZ, REM} than in \cite{DJ}. 

In \cite{LYU} M. Lyubich described ergodic properties of $E_1(z)=e^z$ and proved that for the map $E_1$ the $\omega$-limit set of almost every $z\in\C$ is $\Orb(0) \cup \{\infty\}$. 
This result can be extended to the case $\lambda > \frac{1}{e}$ (see \cite{ROM}).

In this paper we are interested in the case $\lambda>\frac{1}{e}$. Let us recall that results in \cite{DJ, DJR, FZ} are provided assuming that itineraries are bounded. We abandon this assumption and we show the existence of indecomposable continua in the case of unbounded itineraries. 
Let us recall that a block is a finite sequence of integers, and $0_k$ is a block of zeros, which length is $k\in\N$. The main result is stated as follows.

\begin{thmm}\label{thm:A}
    Consider a map $\E(z)=\lambda e^z$, $\lambda>\frac{1}{e}$. Let $t_{0}, t_{1}, \ldots$ be a sequence of finite blocks of integers, such that for every $m\in\N$ at least one term in the block $t_m$ is non-zero. Then, there exists a sequence of natural numbers $(n_q)_{q\in\N}$ such that if
    $$s = 0_{n_0}t_{0}0_{n_1}t_{1}0_{n_2}t_{2}\ldots,$$
    then $I_s\cup\{\infty\} = \overline{\gamma_s}\cup\{\infty\}=A(\gamma_s)$ and $I_s\cup\{\infty\}$ is an indecomposable continuum in the Riemann sphere.
\end{thmm}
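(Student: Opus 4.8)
The plan is to follow the broad strategy of Devaney--Jarque \cite{DJ}, but to carefully track the dependence of all estimates on the (possibly unbounded) blocks $t_m$, and to choose the lengths $n_q$ of the zero-blocks inductively so as to absorb whatever growth the $t_m$ contribute. The first step is to establish the existence and escaping character of the hair $\gamma_s$. Using the Schleicher--Zimmer parametrization of hairs for exponentially bounded itineraries, I would verify that the particular $s$ we build is exponentially bounded in the sense of \eqref{eq:Expo bounded}; this is precisely where the choice of the $n_q$ enters, since long runs of zeros keep $F^k(x)$ from being overtaken by the entries of the $t_m$. This gives a curve $\gamma_s\colon(0,\infty)\to\C$ of escaping points with $\gamma_s(\eta)\to\infty$ as $\eta\to+\infty$, and with itinerary $s$, so that $\gamma_s\subseteq I_s$.

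The heart of the argument is the indecomposability, which I would obtain via the standard folklore criterion: a continuum $K$ in the sphere is indecomposable provided it contains a dense curve $\gamma$ (a \emph{composant}) that accumulates onto all of $K$ from at least three ``directions,'' or more robustly, provided one exhibits three points of $K$ no two of which lie in a common proper subcontinuum. Concretely, I would show $A(\gamma_s)=\overline{\gamma_s}\cup\{\infty\}=I_s\cup\{\infty\}$ and that the single curve $\gamma_s$ wraps back and accumulates onto itself everywhere. The mechanism is the one sketched in the introduction: each block $0_{n_q}$ forces the corresponding piece of $\gamma_s$ to stretch far into the right half-plane (since $\E$ expands horizontal distances like $e^{\Ree z}$) and then the nonzero block $t_q$ bends it back, so that as $q\to\infty$ the curve makes longer and longer excursions that fold over every previously-constructed portion. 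By choosing $n_q$ large enough at stage $q$ (depending on the finitely many blocks $t_0,\dots,t_q$ already fixed and on the geometry built so far), I can guarantee that the excursion at stage $q$ passes within distance $2^{-q}$ of every point of the stage-$(q-1)$ curve, forcing the accumulation set to contain the whole closure. I would phrase this as a nested-sequence or covering argument and appeal to the compactness of the sphere to conclude $A(\gamma_s)$ is a continuum containing $\gamma_s$ densely; indecomposability then follows because any proper subcontinuum, being closed and not all of $K$, would have to miss an open set, contradicting the everywhere-accumulation.

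The identification $I_s\cup\{\infty\}=\overline{\gamma_s}\cup\{\infty\}$ requires showing that \emph{every} point sharing the itinerary $s$ lies in the closure of the escaping hair, i.e.\ there are no ``extra'' points of $I_s$ outside $\overline{\gamma_s}$. Here I would use the expansion properties of $\E$ on the strips $R_j$ together with the symbolic coding: two points with the same itinerary are forced to stay in the same strip under every iterate, and the hyperbolic (or horizontal) expansion then pins any such point to the accumulation set of the hair. This is essentially the argument used in \cite{DJ} for bounded $s$, and the point is to check that it survives when the $t_m$ are unbounded, again thanks to the buffering zero-blocks.

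I expect the main obstacle to be the inductive choice of the $n_q$ and the attendant quantitative control. In the bounded case the uniform bound on the entries of the $t_m$ lets one choose the $n_q$ from a single quantitative estimate; here, because each $t_m$ may be arbitrarily large, the required length $n_q$ must be determined \emph{after} $t_q$ is revealed and must dominate its contribution to the escape rate while simultaneously realizing the folding geometry. Making these two demands compatible — keeping $s$ exponentially bounded (so $I_s\neq\emptyset$ and the hair exists at all) while still forcing the curve to fold back densely — is the delicate part, and it will require a careful bookkeeping lemma establishing, for each $q$, an explicit lower bound on $n_q$ in terms of $t_q$ and the diameter of the previously constructed arc. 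Once that lemma is in place, the topological conclusion (indecomposability) and the $\omega$-limit set analysis follow by adapting the corresponding arguments of Devaney--Jarque.
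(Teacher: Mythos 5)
Your overall plan (build the itinerary inside a class of controlled growth, force the hair to fold back by long zero-blocks chosen inductively, identify $I_s\cup\{\infty\}$ with $\overline{\gamma_s}\cup\{\infty\}$) matches the paper's strategy, and your emphasis on the bookkeeping needed to keep $s$ exponentially bounded while the $t_m$ grow is exactly the right concern; the paper handles it by padding each $t_j$ with zeros so that the whole itinerary lies in a class $\Sigma_M^p$ of linear growth, for which uniform tail estimates and uniform targets $V(a_n,b_{n+k},M_n)$ can be set up once and for all.

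The genuine gap is in your passage from ``$\gamma_s$ accumulates on all of $\overline{\gamma_s}\cup\{\infty\}$'' to indecomposability. Your closing argument --- that a proper subcontinuum ``would have to miss an open set, contradicting the everywhere-accumulation'' --- is not valid: a closed proper subset of $K$ trivially misses a relatively open subset of $K$, and this contradicts nothing; what one must show is that every proper subcontinuum has \emph{empty interior} in $K$, and density of the accumulating curve alone does not give that (a priori a proper subcontinuum could contain a relatively open piece of $K$, hence a piece of $\gamma_s$). The paper instead invokes Curry's theorem, which requires two further hypotheses that your proposal never addresses and that do not come for free: (i) $\overline{\gamma_s}$ is one-dimensional, i.e.\ has empty interior in $\C$, and (ii) $\overline{\gamma_s}$ does not separate the plane. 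Both are proved in the paper by a Montel-type argument using that the Julia set of $\E$ is all of $\C$: if $\overline{\gamma_s}$ had interior, or if its complement had a bounded component trapped in a strip, the forward images would avoid the lines $\Imm(w)=(2l+1)\pi$ forever (since $s$ does not end in all zeros), contradicting the blow-up property of the Julia set. Without these two facts and a theorem of Curry's type, the topological conclusion does not follow from the folding construction, so this step needs to be supplied rather than asserted. Your ``three points in no common proper subcontinuum'' criterion is a correct alternative characterization, but you never verify it either.
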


This theorem is a generalization of the mentioned theorem of R. Devaney and X. Jarque (\cite[Theorem 5.1]{DJ}).

For $\lambda>\frac{1}{e}$ the map $\E$ has two repelling fixed points in $R_0$: $q_+$ and $q_-$. The imaginary part of $q_+$ is positive and the imaginary part of $q_-$ is negative (see \cite{D}).
The second result of this paper is related to another result of R. Devaney and X. Jarque (\cite[Theorem 6.1 and Corollary 6.2]{DJ}).

\begin{thmm}\label{thm:B}
    Consider a map $\E(z)=\lambda e^z$, $\lambda>\frac{1}{e}$. Let $t_{0}, t_{1}, \ldots$ be a sequence of finite blocks of integers, such that for every $m\in\N$ at least one term in the block $t_m$ is non-zero.
    Then, there exists a sequence of natural numbers $(n_q)_{q\in\N}$ such that if
    $$s = 0_{n_0}t_{0}0_{n_1}t_{1}0_{n_2}t_{2}\ldots,$$
    then there exists exactly one point $z_s\in I_s$ such that
        \begin{itemize}
        
        \item the $\omega$-limit set of $z_s$ contains at least one of the points $q_+$, $q_-$;
        
        \item if $z\in I_s\setminus(\gamma_s\cup\{z_s\})$, then the $\omega$-limit set of $z$ is equal to $\Orb(0)\cup\{\infty\}$.
\end{itemize}
Every other point from the set $I_s$ lies on the curve $\gamma_s$ and the $\omega$-limit set of this point is $\{\infty\}$.
\end{thmm}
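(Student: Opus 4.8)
The plan is to prove Theorem~\ref{thm:B} by combining the topological structure already established in Theorem~\ref{thm:A} with a careful analysis of the real dynamics inside the zero-blocks. The starting point is that Theorem~\ref{thm:A} gives us $I_s\cup\{\infty\}=\overline{\gamma_s}\cup\{\infty\}=A(\gamma_s)$, so the hair $\gamma_s$ consists of escaping points (whose $\omega$-limit set is clearly $\{\infty\}$), and every non-escaping point of $I_s$ is an accumulation point of $\gamma_s$. First I would isolate the distinguished point $z_s$. The natural candidate is the point whose orbit stays ``as far as possible'' from infinity in a controlled way: since the itinerary begins with a long block $0_{n_0}$, the initial iterates lie in the strip $R_0$ with $E_\lambda$ acting roughly like the real map $F(t)=e^t-1$ near its fixed points. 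The two repelling fixed points $q_+,q_-$ of $E_\lambda$ live in $R_0$, and I expect $z_s$ to be the unique point in $I_s$ whose orbit does not escape to infinity and does not shadow $\Orb(0)$, i.e.~the point whose orbit is ``trapped'' and accumulates on $\{q_+,q_-\}$.

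\medskip

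The key step is to show existence and uniqueness of such a trapped point. For existence, I would use the linearly ordered structure of $I_s\cap R_0$ (the hair and its accumulation set are organized by the ordering coming from $\Ree$ or from the escaping coordinate $\eta$). Within each initial zero-block the map $E_\lambda$ restricted to $R_0$ is conjugate to something close to the one-dimensional expanding map $F$, which has an unstable fixed point; pulling back the requirement ``the orbit never leaves a bounded region'' through the infinitely many blocks, I would build a nested sequence of compact sets whose intersection is non-empty and yields a point $z_s$ with bounded orbit. By expansivity of $E_\lambda$ along the hair (the derivative blows up as one moves toward the escaping end), this intersection is a single point, giving uniqueness. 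The choice of the rapidly increasing lengths $(n_q)$ supplied by Theorem~\ref{thm:A} is exactly what guarantees that the real one-dimensional dynamics dominates long enough in each block for the pullback argument to close up.

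\medskip

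Next I would identify the $\omega$-limit set of $z_s$. Because the orbit of $z_s$ is bounded and repeatedly enters $R_0$ during the long zero-blocks where the dynamics are governed by $F$ near its repelling fixed points, the orbit must accumulate on at least one of $q_+$ or $q_-$. The precise claim ``contains at least one of $q_+,q_-$'' is weaker than pinning down which one, so it suffices to show the orbit cannot escape and cannot be eventually attracted to anything other than the fixed-point set; a compactness plus invariance argument on $\omega(z_s)$, combined with the fact that $q_+,q_-$ are the only bounded-orbit fixed points available in the relevant strip, delivers this. For the second bullet, I would take any $z\in I_s\setminus(\gamma_s\cup\{z_s\})$: such a point is non-escaping (else it would lie on $\gamma_s$) and is not the trapped point $z_s$, so its orbit must be pushed far out during the non-zero blocks $t_m$ yet return near $R_0$ during the zero-blocks. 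The alternating behaviour forces the orbit to shadow the forward orbit of the singular value $0$ (which itself escapes under the zero-block dynamics) and to accumulate at $\infty$, giving $\omega(z)=\Orb(0)\cup\{\infty\}$; this is the mechanism already present in the bounded case of~\cite{DJ}, and I would adapt that argument, checking it survives unboundedness of the $t_m$. Finally, the escaping points on $\gamma_s$ trivially have $\omega$-limit set $\{\infty\}$, completing the trichotomy.

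\medskip

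The main obstacle I anticipate is the existence-and-uniqueness of $z_s$ together with the control of its $\omega$-limit set, because unlike the bounded case we cannot assume a uniform bound on the excursions caused by the blocks $t_m$. The danger is that an unbounded block $t_m$ could eject the candidate orbit so violently that the nested-compact-set construction fails to remain non-empty, or that the returning iterate lands too close to the escaping regime to be recaptured near $\{q_+,q_-\}$. Overcoming this requires exploiting the freedom in choosing $(n_q)$: by making each zero-block long enough relative to the size of the preceding block $t_m$, the contraction of the relevant inverse branches of $E_\lambda$ over the long zero-block reabsorbs the expansion incurred during $t_m$, keeping the pullback sets compact and shrinking. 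Making this quantitative—balancing the expansion of $F$ across $0_{n_q}$ against the growth of $t_m$—is where the real work lies, and it is precisely the estimate that the sequence $(n_q)$ from Theorem~\ref{thm:A} is engineered to satisfy.
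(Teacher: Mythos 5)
Your overall architecture (a nested intersection of compact pullbacks to produce $z_s$, contraction of inverse branches for uniqueness, and shadowing of $\Orb(0)$ for the remaining non-escaping points) matches the paper's strategy, but the proposal contains a genuine error at its core: you characterize $z_s$ as ``the point whose orbit is trapped in a bounded region'' and propose to build it by pulling back the requirement that the orbit never leaves a bounded set. For the unbounded itineraries that are the whole point of this theorem, no such point exists. The paper's Lemma 7.1 shows that if $s$ is unbounded then \emph{every} $z\in I_s$ has unbounded orbit (since $\E^n(z)\in R_{s_n}$ and $|s_n|$ is unbounded), and $\infty$ lies in every $\omega$-limit set. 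Your nested sequence of compact sets defined by ``the orbit never leaves a bounded region'' would therefore have empty intersection. This is precisely why the theorem is stated in terms of ``the $\omega$-limit set contains $q_+$ or $q_-$'' rather than ``bounded orbit'' as in the bounded case of \cite{DJ}. The correct condition, which the paper uses, is that the orbit returns to a \emph{fixed} compact rectangle $D_{e_j}$ only at the specific times $d_j$ marking the ends of the blocks $t_j$, while in between it necessarily makes unbounded excursions through the strips prescribed by $t_j$.

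A second, related gap is in your existence argument. Non-emptiness of the nested intersection is not automatic: one must show that the pullback of $D_{e_j}$ through the block $t_j$ and the preceding zero-block actually lands inside $D_{e_{j-1}}$. The paper achieves this by introducing the absorbing regions $A^{\pm}(n)$ and proving (via Schwarz--Pick contraction of $L_{\lambda,0}$ on $\intt A^{\pm}(n)$) that after sufficiently many applications of $L_{\lambda,0}$ the pullback is squeezed into $B(q_{\pm},\tfrac1j)$; the lengths $n_j$ are then chosen at least this large. This is not merely a technical refinement --- it is what simultaneously guarantees non-emptiness of $G$ and forces $\E^{d_{j-1}+1}(z_s)$ into shrinking neighbourhoods of $q_{\pm}$, which is how the first bullet of the theorem is actually proved. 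Your alternative (``compactness plus invariance of $\omega(z_s)$, and $q_{\pm}$ are the only bounded-orbit fixed points available'') does not work, because $\omega(z_s)$ is not contained in any compact subset of $\C$ and contains much more than fixed points. Your treatment of the second bullet and of the escaping points on $\gamma_s$ is essentially right in outline, and your closing paragraph correctly identifies where the quantitative work lies; but the bounded-orbit characterization must be replaced before the argument can be carried out.
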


The structure of the paper is as follows. In Section 2 we present the notation and the initial definitions and theorems that we refer to in this paper. In Section 3 we introduce itineraries with linear growth and we prove their properties. In Section 4 we introduce targets. In Section 5 we construct itineraries for which hairs pass twice through a proper target. The properties shown in these two sections allow us to prove the existence of hair that accumulates on itself. In Section 6 we prove Theorem \ref{thm:A}. In Section 7 we prove Theorem \ref{thm:B}.

\section*{Acknowledgments}

This research was funded in whole or in part by the National Science Centre, Poland, grant no. 2023/51/B/ST1/00946. For the purpose of Open Access, the author has applied a CC-BY public copyright licence to any Author Accepted Manuscript (AAM) version arising from this submission.

I would like to thank my supervisor, Krzysztof Bara\'nski, for his help and support. I would also like to thank the Christian-Albrechts-Universit\"at zu Kiel for their hospitality and Walter Bergweiler for the useful discussions during my research stay.

\section{Notation and preliminaries}

 In this article we use the notation $\N = \{0, 1, 2, \ldots\}$. We denote the Riemann sphere by $\RS = \C\cup\{\infty\}$. By $B(z,r)$ we denote an open disc with the centre at $z\in\C$ and radius $r>0$. We denote the closure of set $A$ in $\C$ by $\overline{A}$. We indicate when closures in other spaces are considered (e.g. in $\RS$). 
 We say that the set $A\subseteq \C$ does not separate the plane if the set $\C\setminus A$ is connected. 
 
 Let $\varphi: [0,+\infty) \longrightarrow \RS$ be a continuous and injective curve. We say that the curve $\varphi$ accumulates on the set $A\subseteq\RS$, if for every $z\in A$, there exists a sequence $(\eta_j)_{j=0}^\infty$ such that $\lim_{j\to\infty} \eta_j=+\infty$ and $\lim_{j\to\infty}\varphi(\eta_j)=z$. The curve $\varphi: [0,+\infty) \longrightarrow \RS$ accumulates on itself when it accumulates on its image. 
 
 For an infinite sequence $s\in\Z^\N$ we use the notation $s=(s_0s_1\ldots)$ when we want to list terms of $s$, and similarly for a finite sequence $p$ we use the notation $p=(p_0p_1\ldots p_n)$. On the other hand, if $p$ is a finite sequence of integers and $t$ is a finite or infinite sequence of integers, we use a concatenation of $p$ and $t$ to denote a sequence $s$ in which the sequence $p$ is followed by the sequence $t$: $s=pt=p_0p_1\ldots p_n t$. We set the value of a product $\prod_{k=1}^0 a_n$ to be equal $1$ and the value of a sum $\sum_{k=1}^0 a_n$ to be equal $0$.
 
Let $\ind X$ denote the small inductive dimension of $X$, $\Ind X$ denote the large inductive dimension of $X$ and $\dim X$ denote the topological dimension of $X$ (see \cite{ENG}).
Let us present two theorems about these dimensions. 
        
    \begin{thm}\cite[Theorem 1.7.7]{ENG}\label{thm:Dimension 2}
    For every separable metric space $X$ we have
    $\ind X = \Ind X = \dim X$.    
    \end{thm}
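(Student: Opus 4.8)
The plan is to prove the three inequalities
$$\ind X \le \Ind X \le \dim X \le \ind X,$$
which together force equality. Throughout I would use that a separable metric space is Lindel\"of and perfectly normal, so that countable bases are available and the countable sum theorem for the covering dimension applies; I would also use that both inductive dimensions are monotone with respect to arbitrary (not merely closed) subspaces in metric spaces. The first inequality $\ind X \le \Ind X$ is the soft one: arguing by induction on $\Ind X = n$, for a point $x$ and an open $U \ni x$ I apply the defining property of $\Ind$ to the closed set $\{x\}$ (points are closed in a $T_1$ space) to obtain an open $V$ with $x \in V \subseteq U$ and $\Ind \partial V \le n-1$; the inductive hypothesis gives $\ind \partial V \le \Ind \partial V \le n-1$, whence $\ind X \le n$.

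For $\dim X \le \ind X$ I would induct on $\ind X = n$, the cases $n=-1,0$ being the base (the coincidence of all three notions in dimension $\le 0$, which I would treat first and separately). Assuming $\ind X \le n$, separability yields a countable basis $\{U_i\}$ with $\ind \partial U_i \le n-1$ for every $i$. By the inductive hypothesis each closed boundary satisfies $\dim \partial U_i \le \ind \partial U_i \le n-1$, so the countable sum theorem gives $\dim B \le n-1$ for $B=\bigcup_i \partial U_i$. On the complement $X\setminus B$ the traces $U_i \setminus B$ form a basis of clopen sets, since $\partial U_i \subseteq B$; hence $\ind(X\setminus B)\le 0$ and therefore $\dim(X\setminus B)\le 0$. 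The addition theorem $\dim(Y\cup Z)\le \dim Y + \dim Z + 1$ for subspaces of a metric space then yields $\dim X \le (n-1)+0+1 = n$.

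For $\Ind X \le \dim X$ I would induct on $\dim X = n$ using the partition characterization of the covering dimension: in a normal space, $\dim X \le n$ if and only if every pair of disjoint closed sets admits a partition of covering dimension at most $n-1$. Given a closed set $A$ and an open $W\supseteq A$, set $B = X\setminus W$ and take a partition $C$ between $A$ and $B$ with $\dim C \le n-1$; this gives disjoint open sets $U\supseteq A$ and $V\supseteq B$ with $X\setminus C = U\cup V$, so $\partial U \subseteq C$. By monotonicity and the inductive hypothesis, $\Ind \partial U \le \Ind C \le \dim C \le n-1$, and since $A\subseteq U\subseteq W$ this verifies $\Ind X \le n$.

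The main obstacle is not the bookkeeping of these three inductions but the dimension-theoretic engine they rely on: the countable sum theorem and the addition (subset) theorem for the covering dimension, the coincidence of the three dimensions in the zero-dimensional base case, and especially the equivalence between the covering-dimension definition and its formulation through partitions into lower-dimensional separators. Each of these is itself a substantial theorem, and establishing them (rather than the cyclic chain above) is where the real work of the proof lies; this is exactly the machinery developed in \cite{ENG} leading up to the cited statement.
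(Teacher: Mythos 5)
The paper does not prove this statement at all: it is quoted verbatim as Theorem 1.7.7 of Engelking's monograph \cite{ENG} and used as a black box, so there is no in-paper argument to compare against. Your outline is the standard proof of the coincidence theorem from that reference — the cyclic chain $\ind X \le \Ind X \le \dim X \le \ind X$, with the first inequality from $T_1$-separation of points, the second from the partition characterization of $\dim$, and the third from a countable basis with low-dimensional boundaries combined with the countable sum and addition theorems — and each step is sound, with the genuinely substantial ingredients (the sum and addition theorems for $\dim$, the zero-dimensional coincidence, and the partition characterization) correctly identified as the deferred machinery rather than glossed over.
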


    \begin{thm}\cite[Theorem 1.8.10]{ENG}\label{thm:Dimension}
    A subset $X$ of $n$-dimensional Euclidean space $\mathbb{R}^n$ satisfies $\ind X = n$ if and only if the interior of $X$ is non-empty.
    \end{thm}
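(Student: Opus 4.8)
The statement is classical, so the plan is to reduce it to monotonicity of dimension together with the value $\dim\R^n = n$. Since $X$ is a subspace of $\R^n$, it is separable metric, so by Theorem \ref{thm:Dimension 2} it suffices to argue with the covering dimension $\dim$, and subspace monotonicity gives $\dim X \le \dim\R^n = n$ in all cases. The nontrivial content is thus to decide when this bound is attained.

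The direction ($\Leftarrow$) is immediate: if $\intt X \neq \emptyset$, then $X$ contains an open ball, which is homeomorphic to $\R^n$, so by monotonicity $\dim X \ge \dim\R^n = n$, and hence $\dim X = n$.

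For ($\Rightarrow$) I would prove the contrapositive: if $\intt X = \emptyset$, equivalently $\R^n\setminus X$ is dense, then $\dim X \le n-1$. Here the plan is to use the Eilenberg--Otto characterization of dimension by partitions (see \cite{ENG}): $\dim X \le n-1$ holds if and only if for every family of $n$ pairs $(A_i,B_i)$, $i=1,\dots,n$, of disjoint closed subsets of $X$ there exist partitions $L_i$ in $X$ between $A_i$ and $B_i$ with $\bigcap_{i=1}^n L_i = \emptyset$. Given such pairs, I would extend them to disjoint closed subsets of $\R^n$ and, using $\dim\R^n = n$, choose partitions $\widetilde{L}_i$ in $\R^n$ whose common intersection $K = \bigcap_{i=1}^n \widetilde{L}_i$ satisfies $\dim K \le 0$, exploiting that each successive partition lowers the dimension of the running intersection by at most one. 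The restrictions $L_i := \widetilde{L}_i \cap X$ are then partitions in $X$ between $A_i$ and $B_i$, and $\bigcap_{i=1}^n L_i \subseteq K \cap X$.

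The main obstacle is the final step: arranging $K \cap X = \emptyset$. The set $K$ is compact of dimension at most $0$, hence totally disconnected, and I would exploit the density of $\R^n \setminus X$ to deform the partitions $\widetilde{L}_i$ slightly so that their intersection is pushed entirely into the open set $\R^n\setminus X$. Concretely, near each point of $K$ the complement of $X$ is dense, so the partition hypersurfaces can be locally rerouted through points of $\R^n\setminus X$, and compactness of $K$ reduces this to finitely many local adjustments. The delicate point to watch is that $\intt X = \emptyset$ does not force $X$ to be a null set (for instance $X$ may contain a set of fat Cantor type of positive measure), so no single rigid translation need move $K$ off $X$; the argument must instead rely on the flexibility of perturbing each partition independently. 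Once $K \cap X = \emptyset$ is secured, one gets $\bigcap_{i=1}^n L_i = \emptyset$, and the partition criterion yields $\dim X \le n-1$, completing the contrapositive and hence the theorem.
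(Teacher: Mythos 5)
This statement is quoted from Engelking; the paper gives no proof of it, so your attempt can only be measured against the standard argument. Your direction ($\Leftarrow$) and the reduction of ($\Rightarrow$) to the partition criterion are fine, but the final step is a genuine gap, and it is exactly where the entire content of the theorem sits. You pick $n$ partitions $\widetilde{L}_i$ in $\R^n$ with $\dim\bigl(\bigcap_i\widetilde{L}_i\bigr)\leqslant 0$ (correct, via the second separation theorem in $\R^n$) and then assert that, because $\R^n\setminus X$ is dense, the $\widetilde{L}_i$ can be ``locally rerouted'' so that the new common intersection lies entirely in $\R^n\setminus X$. No mechanism is given that controls where the perturbed intersection --- a typically uncountable, Cantor-type zero-dimensional set --- lands, and density of the complement is far too weak a hypothesis to invoke casually: for $X=\R^n\setminus\mathbb{Q}^n$ one has $\intt X=\varnothing$ while $\R^n\setminus X=\mathbb{Q}^n$ is countable, so your claim amounts to producing, for essentially arbitrary pairs of separated sets, $n$ partitions whose common intersection is contained in $\mathbb{Q}^n$. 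That is a global statement equivalent to the theorem itself, not something obtained by finitely many local adjustments near the old intersection. The natural tool for forcing the running intersection to avoid $X$ step by step is the second separation theorem applied with $M=X$, but that presupposes $\ind X\leqslant n-1$, which is what is being proved; your argument is circular at precisely this point. Note also that the empty-interior hypothesis is never genuinely used in your write-up except through this unsupported perturbation claim.

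The standard route (Hurewicz--Wallman, Engelking) replaces the perturbation by a different reduction: since $\R^n\setminus X$ is dense, it contains a countable dense subset $D$ of $\R^n$, and one proves $\ind(\R^n\setminus D)\leqslant n-1$ by exhibiting $\R^n\setminus D$ as the union of $n$ zero-dimensional subspaces and invoking the decomposition theorem. For $D=\mathbb{Q}^n$ these pieces are the sets $M_i$ of points with exactly $i$ rational coordinates, $0\leqslant i\leqslant n-1$; the general case reduces to this one (for instance via the countable dense homogeneity of $\R^n$, which carries $D$ onto $\mathbb{Q}^n$ by an ambient homeomorphism). Monotonicity then gives $\ind X\leqslant\ind(\R^n\setminus D)\leqslant n-1$. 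If you wish to retain the partition language, you must substitute a device of this kind for the rerouting step; as written, the proof does not go through.
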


Due to Theorem \ref{thm:Dimension 2}, we refer to the topological dimension of a set $A\subseteq\C$ without distinguishing the type of the dimension.

In this paper, as in \cite{DJ}, we refer to the following theorem of S. Curry (see \cite{C}), which is the basis for the construction presented in this paper.
 
\begin{thm}\cite[Theorem 8]{C}\label{thm:Curry}
Let $X$ be a one-dimensional continuum contained in the plane, which is the closure of an injective curve that accumulates on itself. Assume that $X$ does not separate the plane. Then $X$ is an indecomposable continuum.
\end{thm}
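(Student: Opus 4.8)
The plan is to use the standard criterion from continuum theory that a continuum is indecomposable if and only if each of its proper subcontinua has empty interior (equivalently, is nowhere dense) in $X$. So it suffices to rule out a proper subcontinuum with nonempty interior. Before that, I would cash in the hypothesis that $\varphi$ accumulates on itself. Writing $R=\varphi([0,+\infty))$ and letting $A(\varphi)=\bigcap_{T\ge 0}\overline{\varphi([T,+\infty))}$ be the accumulation set, the hypothesis gives $R\subseteq A(\varphi)$; since $A(\varphi)$ is closed and $X=\overline{R}$, this forces $A(\varphi)=X$. As the sets $\overline{\varphi([T,+\infty))}$ are nested, contained in $X$, and have intersection $X$, each of them already equals $X$. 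Thus \emph{every tail of the ray is dense in $X$}, which is the structural fact I would exploit throughout.

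Next I would argue by contradiction. Suppose $K\subsetneq X$ is a proper subcontinuum with $U:=\intt_{X}(K)\neq\emptyset$. Then $U$ is open and nonempty, and since $K\neq X$ and $X$ is connected, the frontier $\partial_X K$ is nonempty while $X\setminus K$ is open and nonempty. Density of every tail yields parameters $\eta_j\to+\infty$ with $\varphi(\eta_j)\in U\subseteq K$ and parameters $\xi_j\to+\infty$ with $\varphi(\xi_j)\in X\setminus K$. Hence the injective ray $\varphi$ leaves and re-enters $K$ at arbitrarily large times, producing infinitely many pairwise disjoint subarcs (excursions) that join $\intt_X(K)$ to $X\setminus K$ and cross $\partial_X K$; moreover, by density, these excursions must return arbitrarily close to every point of $X$.

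The remaining hypotheses are $\dim X=1$ and that $X$ does not separate the plane, and turning the oscillation above into a contradiction is the step I expect to be the main obstacle. First, $\dim X=1$ together with Theorem \ref{thm:Dimension} shows $X$ has empty interior in $\C$, so $\Omega:=\RS\setminus X$ is a dense open set; it is connected because $X$ does not separate the plane, and it is simply connected because its complement $X$ in the sphere is a continuum. I would then pass to prime ends via a Riemann map $\psi\colon\mathbb{D}\to\Omega$ and use that the accessible points of $X$ are dense in $\partial\Omega$. Choosing accesses that land in $\intt_X(K)$ and in $X\setminus K$ and combining them with the infinitely many excursions of the \emph{simple} ray, the aim is to construct a crosscut of $\Omega$ realizing a separation incompatible with the connectedness of $\Omega$ (equivalently, forcing $K$, and hence $X$, to separate the plane). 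The delicate part is the planar bookkeeping that certifies the excursions are genuinely linked around $\partial_X K$, so that the crosscut truly separates: this is precisely where injectivity of $\varphi$, one-dimensionality of $X$, and non-separation must be used together, and it is the heart of the argument.

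As an alternative to the crosscut estimates, I would try to route the contradiction through the composant structure of $X$ together with the Baire category theorem applied to the nested dense tails, again using that a nonseparating plane continuum is unicoherent (a classical fact). Either route isolates the same crux: a simple dense ray that accumulates on itself cannot be confined to one side of a proper subcontinuum without contradicting the non-separation of the plane.
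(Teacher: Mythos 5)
This statement is not proved in the paper at all: it is quoted from Curry \cite[Theorem 8]{C} and used as a black box, so there is no internal proof to compare your attempt against. Judged on its own merits, your preliminary reductions are correct. The criterion that a continuum is indecomposable iff every proper subcontinuum has empty interior in $X$ is the standard one; your deduction that every tail $\overline{\varphi([T,+\infty))}$ equals $X$ (from $R\subseteq A(\varphi)\subseteq X=\overline{R}$ together with nestedness) is right; and a proper subcontinuum $K$ with $\intt_X(K)\neq\varnothing$ does force infinitely many pairwise disjoint excursions of the injective ray across $\partial_X K$. The auxiliary facts you cite (empty interior of $X$ in $\C$ via Theorem \ref{thm:Dimension}, simple connectivity of $\RS\setminus X$, density of accessible boundary points) are also correct, and the target of forcing $K$ to separate the plane is coherent, since one can check that under the hypotheses every subcontinuum of $X$ is nonseparating.

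The problem is that the argument stops exactly where the content of the theorem begins. You say yourself that turning the oscillation into a contradiction is ``the main obstacle'' and ``the heart of the argument,'' and neither proposed route is carried out. In the prime-end route the intended contradiction is not even well posed: a crosscut of a simply connected domain \emph{always} separates it into two pieces, so ``realizing a separation incompatible with the connectedness of $\Omega$'' is not in itself contradictory; what is actually needed is the planar bookkeeping showing that the excursions, the accesses, and $K$ together bound a region whose existence violates non-separation or one-dimensionality, and that bookkeeping --- where injectivity of $\varphi$, $\dim X=1$, and non-separation must all be used at once --- is precisely the substance of Curry's proof. The alternative route via composants, unicoherence, and Baire category is only named, not executed. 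As it stands the proposal is a correct framing with an honest placeholder where the decisive step should be; it is a genuine gap, not a proof.
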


It is known that for $\lambda>\frac{1}{e}$ the Julia set of $\E$ is equal to $\C$. (see, for instance, \cite{D, MIS}). Combining this result with Montel's theorem (see \cite{CG}) we obtain the corollary.

\begin{cor}\label{cor:Montel}
   Let $\lambda>\frac{1}{e}$ and $U\subseteq \C$ be a non-empty, open domain. Then 
   $$\C\setminus\{0\} \subseteq\bigcup_{n\in\N}\E^n(U).$$
\end{cor}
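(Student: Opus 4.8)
The plan is to argue by contradiction, exploiting the fact that the iterates $\E^n$ restricted to $U$ form a family of holomorphic maps into $\RS$ that automatically omits two values, together with the hypothesis that the Julia set is all of $\C$, which forbids normality on any open set.

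First I would observe that each iterate $\E^n$ with $n\geq 1$ omits the value $0$ on $U$. Indeed, $\E(z)=\lambda e^z$ never vanishes, so $\E^n=\E\circ\E^{n-1}$ never takes the value $0$ for $n\geq 1$. Moreover, every $\E^n$ is an entire function, hence omits $\infty$. Thus, viewing the iterates as maps $U\to\RS$, the family $\{\E^n\}_{n\geq 1}$ omits both $0$ and $\infty$ on $U$. Next, I would suppose toward a contradiction that some $w\in\C\setminus\{0\}$ satisfies $w\notin\E^n(U)$ for every $n\in\N$. Then each $\E^n$, $n\geq 1$, omits on $U$ the three distinct values $0$, $\infty$, and $w$. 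By Montel's theorem, the family $\{\E^n\}_{n\geq 1}$ is then normal on $U$, so $U$ is contained in the Fatou set of $\E$. Since $\lambda>\frac{1}{e}$, the Julia set of $\E$ equals $\C$, so the Fatou set is empty; this is the desired contradiction. Hence no such $w$ exists, and every $w\in\C\setminus\{0\}$ lies in $\E^n(U)$ for some $n$, which gives the claimed inclusion.

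The argument is short because all the analytic content is contained in the two cited inputs. The one point requiring care is the bookkeeping of omitted values: one must verify that $0$ is genuinely omitted by all positive iterates (a direct consequence of $\E$ omitting $0$), so that, together with $\infty$, the hypothetical extra omitted value $w$ supplies the third value needed to invoke Montel. I expect no further obstacle, since the passage from normality on $U$ to $U$ lying in the Fatou set, and the emptiness of the Fatou set for $\lambda>\frac{1}{e}$, are immediate from the definitions and the cited description of the Julia set.
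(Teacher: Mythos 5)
Your argument is correct and is exactly the combination the paper intends: the paper gives no detailed proof, simply noting that the corollary follows from Montel's theorem together with the fact that the Julia set of $\E$ equals $\C$ for $\lambda>\frac{1}{e}$, and your write-up supplies precisely those details (three omitted values $0$, $w$, $\infty$ force normality on $U$, contradicting the emptiness of the Fatou set). No gaps.
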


Now we state Koebe's theorem (see, for instance, \cite[Theorem 1.3]{POM}).
\begin{thm}\label{thm:Koebe}
Suppose that $f$ is a univalent holomorphic function in
the disc $B(a, r)$ and let $0 < \eta < 1$. Then for $z\in B(a, \eta r)$ we have 
$$\frac{|f'(a)|\eta r}{(1+\eta)^2} \leqslant |f(z)-f(a)| \leqslant \frac{|f'(a)|\eta r}{(1-\eta)^2}.$$
\end{thm}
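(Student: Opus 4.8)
The plan is to reduce the statement to the classical normalized form and then build up the Koebe growth (distortion) theorem for the class $S$ of univalent functions on the unit disc, starting from the area theorem. First I would strip off the affine data by setting
\[
g(w)=\frac{f(a+rw)-f(a)}{r\,f'(a)},\qquad w\in\mathbb{D}=B(0,1),
\]
so that $g$ is univalent on $\mathbb{D}$ with $g(0)=0$ and $g'(0)=1$ (injectivity of $f$ guarantees $f'(a)\neq 0$, so the division is legitimate and $g\in S$). Writing $z=a+rw$ and $\rho=|w|=|z-a|/r$, multiplying the target inequalities by $r|f'(a)|$ shows that they are exactly the rescaling of
\[
\frac{\rho}{(1+\rho)^2}\leqslant |g(w)|\leqslant\frac{\rho}{(1-\rho)^2};
\]
the passage from $\rho$ to the fixed value $\eta$ uses $\rho\leqslant\eta$ together with the monotonicity of $t\mapsto t/(1\mp t)^2$ on $[0,1)$.

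To prove this two-sided estimate for $g\in S$, I would first establish Bieberbach's bound $|a_2|\leqslant 2$ for the expansion $g(w)=w+a_2w^2+\cdots$, obtained from the area theorem applied to the odd square-root transform of $g$. Applying $|a_2|\leqslant 2$ to each Koebe transform of $g$ yields the pointwise distortion inequality
\[
\Bigl|\frac{w\,g''(w)}{g'(w)}-\frac{2|w|^2}{1-|w|^2}\Bigr|\leqslant\frac{4|w|}{1-|w|^2},
\]
whence, taking real parts and integrating $\partial_t\log|g'(te^{i\theta})|$ in $t$, one gets $\tfrac{1-\rho}{(1+\rho)^3}\leqslant|g'(w)|\leqslant\tfrac{1+\rho}{(1-\rho)^3}$. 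Integrating the upper derivative bound along the radius from $0$ to $w$ gives the upper growth estimate $|g(w)|\leqslant\rho/(1-\rho)^2$.

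The lower growth bound does not follow from a pointwise derivative estimate and is where I expect the main work to lie. If $|g(w)|\geqslant\tfrac14$ it is automatic, since $\rho/(1+\rho)^2\leqslant\tfrac14$ for every $\rho\in[0,1]$. Otherwise the Koebe one-quarter theorem (itself a consequence of $|a_2|\leqslant 2$) shows that the straight segment from $0$ to $g(w)$ lies in $g(\mathbb{D})$, so I can pull it back to a curve $C$ joining $0$ to $w$ and estimate
\[
|g(w)|=\int_C |g'(\zeta)|\,|d\zeta|\geqslant\int_0^{\rho}\frac{1-t}{(1+t)^3}\,dt=\frac{\rho}{(1+\rho)^2},
\]
using the lower derivative bound and the fact that $C$ sweeps through every radius in $[0,\rho]$. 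Undoing the normalization $g\mapsto f$ then recovers the displayed inequalities, as in \cite[Theorem~1.3]{POM}.

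Finally, I would note one genuine subtlety: read verbatim the lower bound cannot hold for all $z\in B(a,\eta r)$, since it fails at $z=a$; it is sharp only on the circle $|z-a|=\eta r$. The content actually used downstream is the two-disc sandwich $B\bigl(f(a),\tfrac{|f'(a)|\eta r}{(1+\eta)^2}\bigr)\subseteq f\bigl(B(a,\eta r)\bigr)\subseteq B\bigl(f(a),\tfrac{|f'(a)|\eta r}{(1-\eta)^2}\bigr)$, whose lower inclusion follows because the boundary circle maps to a closed curve staying at distance at least $\tfrac{|f'(a)|\eta r}{(1+\eta)^2}$ from $f(a)$.
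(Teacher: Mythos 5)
The paper does not prove this statement at all; it is quoted verbatim from Pommerenke (cited as \cite[Theorem 1.3]{POM}), so there is no internal proof to compare against. Your proposal is the standard and correct derivation of the growth theorem: normalize to the class $S$, prove $|a_2|\leqslant 2$ via the area theorem, deduce the distortion bounds for $|g'|$, integrate radially for the upper growth estimate, and handle the lower estimate via the Koebe one-quarter theorem and the pulled-back segment. Your closing remark is also a legitimate catch: as literally stated the lower inequality fails at $z=a$ (and indeed at every $z$ with $|z-a|<\eta r$ one only gets the bound with $\rho=|z-a|/r$ in place of $\eta$, which is weaker since $t\mapsto t/(1+t)^2$ is increasing); the correct reading is the one you give, namely the lower bound on the circle $|z-a|=\eta r$ or equivalently the inclusion $B\bigl(f(a),\tfrac{|f'(a)|\eta r}{(1+\eta)^2}\bigr)\subseteq f\bigl(B(a,\eta r)\bigr)$, and the paper only ever invokes the upper estimate, which is valid on the whole disc.
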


We present the following lemma. The proof of this lemma for $\lambda = 1$ can be found in \cite[Lemma 1]{MIS}. A proof for all $\lambda > \frac{1}{e}$ is provided, for instance, in \cite[Lemma 6.5]{DJ}.
    \begin{lem}\label{lem:Mis}
        For every $z\in\C$ and every $n\in\N$ the following inequality holds
        $$|\Imm(\E^n(z))| \leqslant |(\E^n)'(z)|.$$
    \end{lem}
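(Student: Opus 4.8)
The plan is to reduce the inequality to a single one-step estimate relating $\Imm(\E(w))$ to $|\E(w)|$, and then to iterate it along the orbit. The entry point is the characteristic feature of the exponential map that its derivative coincides with the map itself: since $\E'(z)=\lambda e^{z}=\E(z)$, the chain rule gives
$$(\E^{n})'(z)=\prod_{k=1}^{n}\E'(\E^{k-1}(z))=\prod_{k=1}^{n}\E^{k}(z),$$
and hence $|(\E^{n})'(z)|=\prod_{k=1}^{n}|\E^{k}(z)|$. Thus the assertion is equivalent to the bound $|\Imm(\E^{n}(z))|\le\prod_{k=1}^{n}|\E^{k}(z)|$, which I would prove directly.

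Next I would record the one-step identity that drives the whole argument. Writing $w=x+iy$ with $x=\Ree(w)$ and $y=\Imm(w)$, we have $\E(w)=\lambda e^{x}(\cos y+i\sin y)$, so, using $\lambda>0$, $|\E(w)|=\lambda e^{x}$ and $\Imm(\E(w))=\lambda e^{x}\sin y$. Consequently
$$|\Imm(\E(w))|=|\E(w)|\,|\sin(\Imm(w))|\qquad\text{for every }w\in\C.$$

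I would then establish $|\Imm(\E^{n}(z))|\le\prod_{k=1}^{n}|\E^{k}(z)|$ by induction on $n\ge 1$. For the base case $n=1$, applying the identity with $w=z$ and the elementary bound $|\sin(\Imm(z))|\le 1$ gives $|\Imm(\E(z))|\le|\E(z)|$. For the inductive step I would apply the identity with $w=\E^{n-1}(z)$ to obtain $|\Imm(\E^{n}(z))|=|\E^{n}(z)|\,|\sin(\Imm(\E^{n-1}(z)))|$, and then use the sharper bound $|\sin t|\le|t|$ together with the induction hypothesis $|\Imm(\E^{n-1}(z))|\le\prod_{k=1}^{n-1}|\E^{k}(z)|$ to conclude $|\Imm(\E^{n}(z))|\le|\E^{n}(z)|\prod_{k=1}^{n-1}|\E^{k}(z)|=\prod_{k=1}^{n}|\E^{k}(z)|$.

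There is no serious obstacle in this argument; the only point deserving care is the interplay between the two elementary estimates on the sine. The base case is forced to use $|\sin|\le 1$, since the initial factor $|\Imm(z)|$ is not controlled by the orbit, whereas every later step uses $|\sin t|\le|t|$ so that the accumulated factors telescope exactly into the product $\prod_{k=1}^{n}|\E^{k}(z)|$. (Note that for $n=0$ the inequality reduces to $|\Imm(z)|\le 1$, so the statement is to be read for $n\ge 1$.)
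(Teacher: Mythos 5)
Your proof is correct and is essentially the standard argument from the sources the paper cites for this lemma (the paper itself gives no proof, only references to \cite{MIS} and \cite{DJ}): the chain rule identity $(\E^n)'(z)=\prod_{k=1}^n \E^k(z)$, the one-step relation $|\Imm(\E(w))|=|\E(w)|\,|\sin(\Imm(w))|$, and induction using $|\sin t|\leqslant 1$ at the first step and $|\sin t|\leqslant |t|$ afterwards. Your parenthetical remark is also apt: since the paper takes $\N=\{0,1,2,\ldots\}$, the statement should indeed be read for $n\geqslant 1$ (or with the trivial $n=0$ case excluded), and this is consistent with how the lemma is used later.
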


Let us finish this section with the following definitions.

\begin{definition}
    Let the inverse maps of $\E$ be
    $L_{\lambda, k}: \C\setminus\{z\in\C: \Imm(z) = 0\textrm{, }\Ree(z) \leqslant 0\} \longrightarrow \C$ defined as $$L_{\lambda, k}(z) = L_k\left(\frac{z}{\lambda}\right),$$ where $L_k$ is a branch of the logarithm defined on $\C\setminus\{z\in\mathbb{C}: \Imm(z) = 0\textrm{, }\Ree(z) \leqslant 0\}$, which values belong to $R_k$. 
\end{definition}

\begin{definition}
    Let $\sigma:\Z^{\N} \longrightarrow \Z^{\N}$  be the shift map defined by $$\sigma((s_0s_1s_2\ldots)) = (s_1s_2s_3\ldots).$$
\end{definition}

\section{Unbounded itineraries with linear growth}

In this section, we present a special subset of itineraries -- the set of itineraries with linear growth. This set contains both bounded and unbounded itineraries. Later, we use the properties of these itineraries to construct indecomposable continua.

Let $\Omega$ be a set of itineraries that do not end with an infinite sequence of zeros, i.e.
    $$
    \Omega = \{(s_j)_{j=0}^\infty\in\mathbb{Z}^{\mathbb{N}}: \forall_{j\in\mathbb{N}} \hspace{1mm} \exists_{m>j}\hspace{1mm} s_m\neq 0\}.
    $$
In the whole paper we consider only itineraries from $\Omega$.
Let the set $\Gamma_s$ be the set of escaping points with itinerary $s$, i.e.,
$$\Gamma_s = \{z\in I_s: \E^n(z) \xrightarrow[]{n \to \infty} \infty\}.$$
According to \cite[Theorem 4.2, Corollary 6.9]{SZ} and the fact that $\lambda\in\R$, for every exponentially bounded itinerary $s\in\Omega$ (see \eqref{eq:Expo bounded}), the set $\Gamma_s$ is an image of the injective curve of the form $\psi:(0,+\infty)\longrightarrow\C$ or $\psi:[0,+\infty)\longrightarrow\C$, such that $\lim_{\eta\to +\infty}\psi(\eta) = \infty$. 
In both cases we denote 
$$\gamma_s = \psi\big|_{(0,+\infty)}$$ 
and we call $\gamma_s$ a hair (ray) associated with the itinerary $s$.
If for the hair $\gamma_s$ there exists the limit $w=\lim_{\eta\to 0} \gamma_s(\eta)$ then we say that the hair $\gamma_s$ lands at $w$, and we call $w$ the endpoint of $\gamma_s$.
If $\Gamma_s$ is an image of the curve of the form $\psi:[0,+\infty)\longrightarrow\C$ then the hair $\gamma_s$ lands at $\psi(0)$. If $\Gamma_s$ is of the form $\psi:(0,+\infty)\longrightarrow\C$ then $\gamma_s$ may or may not have an endpoint. We define the accumulation set $A(\gamma_s)\subseteq\RS$ of a hair $\gamma_s$ as the set of all possible limits $\lim\gamma_s(\eta_n)$, where $(\eta_n)_{n\in\N}$ is such that $\eta_n\to 0$. If the hair $\gamma_s$ lands at $w$, then $A(\gamma_s)=\{w\}$.

According to \cite[Proposition 3.4]{SZ} for every exponentially bounded itinerary $s$ there exists $\eta_s$ such that for every $\eta\geqslant \eta_s$, with proper parametrization of $\gamma_s$, we have
\begin{equation}\label{eq:Functional equation}
    \E(\gamma_s(\eta)) = \gamma_{\sigma(s)}(F(\eta)).
\end{equation}
Moreover, if there are $x> 0$ and $A\geqslant \frac{1}{2\pi}$ such that $|s_k| < AF^k(x)$ for every $k\in\N$, then we can set $\eta_s = x + 2 \ln(\ln(\lambda) + 3)$ and for $\eta\geqslant\eta_s$ we have
\begin{equation}\label{eq:Hair parametrization}
    \gamma_s(\eta) = \eta - \ln(\lambda) + 2\pi is_0 + r_s(\eta),
\end{equation}
where $r_s$ satisfies
\begin{equation}\label{eq:Remainder}
    |r_s(\eta)| < 2e^{-\eta}(\ln(\lambda) + 2 + 2\pi |s_1| + 2\pi AC)
\end{equation}
with a universal constant $C>0$.

\begin{definition}
    Let $M\in\N$ and $p\in\N$. Let the set $\Sigma_M^p$ be defined as
    $$
    \Sigma_M^p=\{(s_j)_{j=0}^\infty\in\Omega: \forall_{j\in\mathbb{N}} \hspace{1mm} |s_j|\leqslant M + jp\}.
    $$
\end{definition}
All itineraries from $\Sigma_M^p$ are exponentially bounded. For $p\geqslant 1$ the set $\Sigma_M^p$ contains itineraries of both types, bounded and unbounded. The set $\Sigma_M^0$ is a set of itineraries bounded by constant $M$ that do not end with all zeros. This is the set of itineraries analysed in the paper \cite{DJ}. 
For $n\in\N$, let us introduce the notation 
$$M_n = M + np.$$

\begin{lem}\label{lem:Dzeta}
    For all $M,p\in\N$ there exists $\zeta(M,p)\in\R$ such that for every $s\in\Sigma_M^p$ there exists $\theta_s\geqslant\eta_s$ such that for all $\eta\geqslant\theta_s$ we have
    $$\Ree(\gamma_s(\eta)) \geqslant \zeta(M,p)$$
    and the equality holds only for $\eta=\theta_s$.
\end{lem}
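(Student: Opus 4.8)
The plan is to work directly with the asymptotic parametrization \eqref{eq:Hair parametrization}, whose remainder is controlled by \eqref{eq:Remainder}, and to extract from it a uniform control on $\Ree(\gamma_s(\eta))$ for large $\eta$. Since $2\pi i s_0$ is purely imaginary, \eqref{eq:Hair parametrization} gives
\begin{equation*}
\Ree(\gamma_s(\eta)) = \eta - \ln(\lambda) + \Ree(r_s(\eta)),
\end{equation*}
so everything reduces to bounding $|r_s(\eta)|$ uniformly over $s \in \Sigma_M^p$. The first and most important step is therefore to choose the constants $A$ and $x$ of \eqref{eq:Expo bounded} uniformly over the family $\Sigma_M^p$. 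Because every $s \in \Sigma_M^p$ satisfies $|s_k| \leq M + kp = M_k$ and $F^k(x)$ grows super-exponentially in $k$, the quantities $M_k / F^k(x)$ stay bounded, so one can fix $x > 0$ and $A \geq \frac{1}{2\pi}$ with $M_k < A F^k(x)$ for all $k \in \N$ simultaneously. With this choice the threshold $\eta_s = x + 2\ln(\ln(\lambda)+3) =: \eta_0$ becomes independent of $s$, and, using $|s_1| \leq M_1$ in \eqref{eq:Remainder}, we obtain a uniform constant $B = B(M,p) := 2(\ln(\lambda) + 2 + 2\pi M_1 + 2\pi AC)$ such that $|r_s(\eta)| < B e^{-\eta}$ for every $s \in \Sigma_M^p$ and every $\eta \geq \eta_0$.

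Combining this with the displayed identity yields the two-sided estimate
\begin{equation*}
\eta - \ln(\lambda) - B e^{-\eta} \leq \Ree(\gamma_s(\eta)) \leq \eta - \ln(\lambda) + B e^{-\eta}, \qquad \eta \geq \eta_0,
\end{equation*}
valid uniformly in $s$. In particular $\Ree(\gamma_s(\eta)) \to +\infty$ as $\eta \to +\infty$, and $\Ree(\gamma_s(\eta_0)) \leq \eta_0 - \ln(\lambda) + B e^{-\eta_0}$. This last inequality is what suggests the correct value of the constant: I would set
\begin{equation*}
\zeta(M,p) := \eta_0 - \ln(\lambda) + B e^{-\eta_0},
\end{equation*}
so that $\Ree(\gamma_s(\eta_0)) \leq \zeta(M,p)$ for every $s \in \Sigma_M^p$.

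Next I would pin down $\theta_s$ by a crossing argument. The function $\eta \mapsto \Ree(\gamma_s(\eta))$ is continuous (as $\gamma_s$ is a continuous curve), is at most $\zeta(M,p)$ at $\eta_0$, and tends to $+\infty$; hence by the intermediate value theorem the level set $T_s = \{\eta \geq \eta_0 : \Ree(\gamma_s(\eta)) = \zeta(M,p)\}$ is non-empty, and it is bounded above because $\Ree(\gamma_s(\eta)) > \zeta(M,p)$ for all large $\eta$. By continuity $T_s$ is closed, so I would define $\theta_s := \max T_s \geq \eta_0 = \eta_s$. By construction $\Ree(\gamma_s(\theta_s)) = \zeta(M,p)$, and for $\eta > \theta_s$ one has $\Ree(\gamma_s(\eta)) > \zeta(M,p)$: otherwise, since $\Ree(\gamma_s(\eta)) \to +\infty$, the intermediate value theorem would produce a further point of $T_s$ beyond $\theta_s$, contradicting maximality. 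This gives exactly $\Ree(\gamma_s(\eta)) \geq \zeta(M,p)$ for $\eta \geq \theta_s$ with equality only at $\theta_s$.

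The genuinely delicate point, and the one I would spend the most care on, is the uniformity in the first step: the statement asserts a single constant $\zeta(M,p)$ working for every $s \in \Sigma_M^p$ at once, so it is essential that $A$, $x$, $\eta_0$ and $B$ be chosen independently of $s$, which is possible precisely because the bound $M_k = M + kp$ defining $\Sigma_M^p$ does not depend on $s$. Once this uniform choice is secured, everything else is a two-sided asymptotic estimate together with an elementary intermediate value / last-crossing argument, and in particular no control on the derivative of $\gamma_s$ is needed.
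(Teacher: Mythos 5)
Your proposal is correct and follows essentially the same route as the paper: a uniform choice of $A$ and $x$ over $\Sigma_M^p$ giving a uniform threshold $\eta^*$ and a uniform bound on $|r_s(\eta)|$, an explicit $\zeta(M,p)$ guaranteeing the tail starts at or below the level $\zeta(M,p)$, and the definition of $\theta_s$ as the last crossing of that level. Your write-up is in fact slightly more careful than the paper's at the final step, where you justify via the intermediate value theorem that the set of crossings is non-empty, closed and bounded above, whereas the paper simply asserts the existence of a largest crossing.
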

\begin{proof}
    For fixed $M$ and $p$ there exist $x> 0$ and $A\geqslant\frac{1}{2\pi}$ such that for every $n\in\N$ we have $M_n < AF^n(x)$. Therefore, for every $s\in\Sigma_M^p$ and every $k\in\N$ we have $|s_k| < AF^k(x)$. Thus, there exists $\eta^* = x+2\ln(\ln(\lambda)+3)$ such that we can set $\eta_s=\eta^*$ for every $s\in\Sigma_M^p$. Moreover, there exists a constant $D$ such that $|r_s(\eta)| < D$ for every $s\in\Sigma_M^p$ and every $\eta \geqslant \eta^*.$ Therefore, we can choose $\zeta(M,p)\in\R$ large enough such that for every $s\in\Sigma_M^p$ we have
    $$ \{z\in\C: \Ree(z) < \zeta(M,p)\} \ \cap \gamma_s([\eta^*,\infty)) \neq \varnothing.$$
    Thus, for every $s\in\Sigma_M^p$ there exists $x$ such that $\Ree(\gamma_s(x)) = \zeta(M,p)$. The largest such $x$ satisfy $\Ree(\gamma_s(x)) = \zeta(M,p)$ and $\Ree(\gamma_s(\eta)) > \zeta(M,p)$ for all $\eta>x$. Thus, we can set $\theta_s$ as the largest $x$ which satisfy $\Ree(\gamma_s(x)) = \zeta(M,p)$.
\end{proof}

Let $s\in\Sigma_M^p$. We call
$$\omega_{s,\zeta(M,p)} = \gamma_s([\theta_s,+\infty))$$ 
the tail of the hair $\gamma_s$. If it is needed, we can take a larger $\zeta(M,p)$.

From now on, we fix $M\in\N$ and $p\in\Z_+$. Therefore, we obtain fixed $\zeta=\zeta(M,p)$. Later, we impose more conditions on $\zeta$ that may cause this number to increase. For every $n\in\Z_+$ we have $\{\sigma^n(s):s\in\Sigma_M^p\}=\Sigma_{M_n}^p$. We set $\zeta_n=\zeta(M_n,p)$.
Every tail $\omega_{s,\zeta}=\gamma_s([\theta_s,+\infty))$ is a curve which lies in the half-plane $\{z\in\C: \Ree(z) \geqslant \zeta\}$, $\lim_{\eta\to +\infty} \Ree(\gamma_{s}(\eta)) = +\infty$, and begins at a point with the real part equal to $\zeta$. We use the symbol $\gamma_s$ to denote the image of the curve $\gamma_s$ unless this leads to confusion.
We have $$\omega_{s,\zeta}\subseteq\gamma_s\subseteq I_s\subseteq R_{s_0}.$$

\begin{prop}\label{prop:Re increases}
    If $\zeta$ is chosen large enough then for every $s\in\Sigma_M^p$ and every $z\in\omega_{s,\zeta}$ we have $\Ree(\E^n(z)) < \Ree(\E^{n+1}(z))$ for every $n\in\N$.
\end{prop}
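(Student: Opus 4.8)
The plan is to track the forward orbit of a point on the tail through the functional equation \eqref{eq:Functional equation} and the asymptotic parametrization \eqref{eq:Hair parametrization}, and then to show that the super-exponential growth produced by $F$ dominates the exponentially small error term $r$. First I would fix $z=\gamma_s(\eta)$ with $\eta\geqslant\theta_s$. Since $\eta\geqslant\theta_s\geqslant\eta_s=\eta^*$ and $F$ is increasing with $F(t)>t$ for $t>0$, we have $F^n(\eta)\geqslant\eta\geqslant\eta^*$ for every $n$, and one verifies, using that $\sigma^n(s)$ is exponentially bounded with the same constant $A$ (with $x$ replaced by $F^n(x)$) together with monotonicity of $F$ and $F'(t)=e^{t}>1$, that \eqref{eq:Hair parametrization} remains valid along the whole orbit; hence \eqref{eq:Functional equation} iterates to $\E^n(z)=\gamma_{\sigma^n(s)}(F^n(\eta))$. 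Writing $\xi=F^n(\eta)\geqslant\theta_s$ and taking real parts in \eqref{eq:Hair parametrization}, the quantity to control becomes
\begin{equation*}
\Ree(\E^{n+1}(z))-\Ree(\E^n(z)) = \big(F(\xi)-\xi\big) + \Ree\big(r_{\sigma^{n+1}(s)}(F(\xi))\big) - \Ree\big(r_{\sigma^n(s)}(\xi)\big),
\end{equation*}
so it suffices to prove that the main term $F(\xi)-\xi=e^{\xi}-1-\xi$, which is positive, increasing, and unbounded for $\xi>0$, beats the two remainder terms.

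For the remainders I would use \eqref{eq:Remainder}. Because the index-$1$ symbol of $\sigma^n(s)$ is $s_{n+1}$ with $|s_{n+1}|\leqslant M_{n+1}=M+(n+1)p$, and because the constant $A$ may be kept the same along the orbit, the bound \eqref{eq:Remainder} gives $|r_{\sigma^n(s)}(\xi)|<2e^{-\xi}(c_1+c_2 n)$ and $|r_{\sigma^{n+1}(s)}(F(\xi))|<2e^{-F(\xi)}(c_1+c_2(n+1))$ for suitable constants $c_1,c_2>0$ depending only on $M,p,\lambda,A$ and the universal constant $C$.

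The step I expect to be the main obstacle is exactly the factor $c_1+c_2 n$, which grows with $n$ because the itinerary is unbounded; in the bounded setting of \cite{DJ} this factor is a constant and the estimate is immediate. The key observation that resolves this is that $n$ is only logarithmic in $\xi$. After enlarging $\zeta$ so that $\theta_s\geqslant 2$ for every $s\in\Sigma_M^p$ — which is possible since \eqref{eq:Hair parametrization} and the uniform bound $|r_s|<D$ from the proof of Lemma \ref{lem:Dzeta} give $\theta_s\geqslant\zeta+\ln\lambda-D$ — we have $F(t)=e^{t}-1\geqslant 2t$ for all $t\geqslant\theta_s$, hence $\xi=F^n(\eta)\geqslant F^n(\theta_s)\geqslant 2^n\theta_s$ and therefore $n\leqslant\log_2\xi$. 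Substituting this bound turns the total error into $2e^{-\xi}(c_1+c_2\log_2\xi)+2e^{-F(\xi)}(c_1+c_2(\log_2\xi+1))$, which tends to $0$ as $\xi\to\infty$.

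To finish, I would set $H(\xi)=(e^{\xi}-1-\xi)-2e^{-\xi}(c_1+c_2\log_2\xi)-2e^{-F(\xi)}(c_1+c_2(\log_2\xi+1))$ and note that $H$ is continuous on $[2,\infty)$ with $H(\xi)\to+\infty$, so there is a threshold $T\geqslant 2$ with $H(\xi)>0$ for all $\xi\geqslant T$. Enlarging $\zeta$ once more so that $\theta_s\geqslant T$ for every $s\in\Sigma_M^p$, every orbit parameter satisfies $\xi=F^n(\eta)\geqslant\theta_s\geqslant T$, whence the displayed difference is at least $H(\xi)>0$ for all $n\in\N$. This yields $\Ree(\E^n(z))<\Ree(\E^{n+1}(z))$, as claimed.
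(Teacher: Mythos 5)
Your proposal is correct and follows essentially the same route as the paper's proof: iterate the functional equation \eqref{eq:Functional equation} to write $\E^n(z)=\gamma_{\sigma^n(s)}(F^n(\eta))$, take real parts in \eqref{eq:Hair parametrization}, and check that the gap $F(\xi)-\xi$ dominates the remainders from \eqref{eq:Remainder}, whose prefactor grows only linearly in $n$ while $e^{-F^n(x)}$ decays super-exponentially. The only differences are bookkeeping: you keep $A$ fixed and replace $x$ by $F^n(x)$ where the paper replaces $A$ by $A+np$ as in \eqref{eq:Const for shift}, and you package the final estimate into a single threshold function $H$ rather than the paper's two separate conditions ($|r_{\sigma^n(s)}|<1$ and $F(\theta)-\theta>2$) — both versions are sound.
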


\begin{proof}
    Let $x\geqslant 1$ and $A\geqslant\frac{1}{2\pi}$ be such that $M_k < A F^k(x)$ for every $k\in\N$. Thus, for every $s\in\Sigma_M^p$ and every $k\in\N$ inequality $|s_j| < AF^k(x)$ holds.
    We have $\omega_{s,\zeta}=\gamma_s([\theta_s,+\infty))$ for some $\theta_s \geqslant x + 2\ln(\ln(\lambda) + 3)$. Due to \eqref{eq:Functional equation}, for every $s\in\Sigma_M^p$ and every $n\in\N$ the curve $\E^n(\omega_{s,\zeta})$ is of the form $\gamma_{\sigma^n(s)}([F(\theta_s), +\infty))$. For every $s\in\Sigma_M^p$ and every $n,k\in\N$ we have 
    \begin{equation}\label{eq:Const for shift}
        (A+np)F^k(x) = AF^k(x) + npF^k(x) > M_k + np = M_{n+k}.
    \end{equation}
    Thus for every $s\in\Sigma_M^p$, if $\sigma^n(s)=u=(u_0u_1\ldots)$ then $|u_k| < (A+np)F^k(x)$ for every $k\in\N$. Therefore, based on \eqref{eq:Remainder}, for every $\eta > F^n(\theta_s)$ we have 
    \begin{equation}\label{eq:Remainder of parametrization}
    \begin{aligned}
        |r_{\sigma^n(s)}(\eta)| & < 2e^{-F^n(\theta_s)}(\ln(\lambda) + 2 + 2\pi(M+(n+1)p) + 2\pi (A+np)C) \\
        & < 2e^{-F^n(x)}k_n,
    \end{aligned}
    \end{equation}
    where $k_n = \ln(\lambda) + 2 + 2\pi(M+(n+1)p) + 2\pi (A+np)C$. Then $(k_n)_{n\in\N}$ is an arithmetic sequence: $k_{n+1}-k_n = 2\pi p (1+C)$. Therefore, for $x$ large enough, $|r_{\sigma^n(s)}(\eta)| < 2e^{-F^n(x)}k_n \to 0$ monotonically with $n\to +\infty$. Let us choose $\zeta$ large enough such that for every $s\in\Sigma_M^p$ and every $n\in\N$ inequality $|r_{\sigma^n(s)}(\eta)| < 1$ holds for every $\eta > F^n(\theta_s)$, and that $F(\theta) - \theta > 2$ for every $\theta > \theta_s$. Thus, from \eqref{eq:Hair parametrization} we know that for every $s\in\Sigma_M^p$, every $z\in\omega_{s,\zeta}$ and every $n\in\N$, the point $\E^n(z)$ is inside the disc $B(w_1, 1)$ and the point $\E^{n+1}(z)$ is inside the disc $B(w_2, 1)$, where $w_1$ and $w_2$ are points with imaginary part equal to some even multiple of $\pi$ and whose real parts satisfy $\Ree(w_2) = F(\Ree(w_1)) > \Ree(w_1) + 2$. Therefore $\Ree(\E^{n+1}(z)) > \Ree(\E^n(z))$.
\end{proof}

Due to \eqref{eq:Const for shift} there exists $\eta^*$ such that for every $s\in\bigcup_{n\in\N}\Sigma_{M_n}^p$ we can set $\eta_s=\eta^*$. Combining this with \eqref{eq:Remainder of parametrization}, we get that for every $s\in\Sigma_M^p$, every $n\in\N$ and every $\eta\geqslant\eta^*$ the inequality $|r_{\sigma^n(s)}(\eta)| < 2e^{\eta^*}k_n$ holds. Thus, we can choose the $\zeta_n$ numbers so that they form an arithmetic sequence. Therefore, we can represent $\gamma_s$ as
$$\gamma_s = \displaystyle{ \bigcup_{n=1}^\infty L_{\lambda, s_0} \circ L_{\lambda, s_1} \circ \ldots \circ L_{\lambda, s_{n-1}} \left(\omega_{\sigma^{n}(s),\zeta_{n}} \right)}.$$

For sufficiently large $\zeta$, the function $\E$ maps the line $\{z\in\mathbb{C}:\Ree(z) = \zeta\}$ to the circle with radius $\lambda e^{\zeta}$ centred at $0$, which intersects the lines defined by $\Imm(z) = \pm (2M_0+1)\pi$ at points with real parts greater than $\zeta$. From now on, let $\zeta$ be large enough that the above property holds and the thesis of Proposition \ref{prop:Re increases} holds.

For $s \in \Sigma_M^p$ let $\hat{s}$ be such that $\hat{s}\in\sigma^{-1}(s)$ and $\hat{s}\in\Sigma_M^p$ e.g. $\hat{s} = (0s_0s_1s_2\ldots)$. Our choice of $\zeta$ implies that the set $\E(\omega_{\hat{s},\zeta})$ is properly contained in $\omega_{s,\zeta}$, because $\E(\omega_{\hat{s},\zeta})$ lies in the strip $R_{s_0}$ and to the right from the circle with radius $\lambda e^\zeta$ centred at $0$. 
We can therefore introduce the following definition.

\begin{definition}\label{def:Alpha}
    Let $s\in\Sigma_M^p$. Let $\alpha_{s,\zeta}=\omega_{s,\zeta}\setminus \E(\omega_{\hat{s},\zeta})$, where $\hat{s}$ is any of the elements of $\sigma^{-1}(s)\cap\Sigma_M^p$. We call $\alpha_{s,\zeta}$ the base of the tail $\omega_{s,\zeta}.$ 
\end{definition}

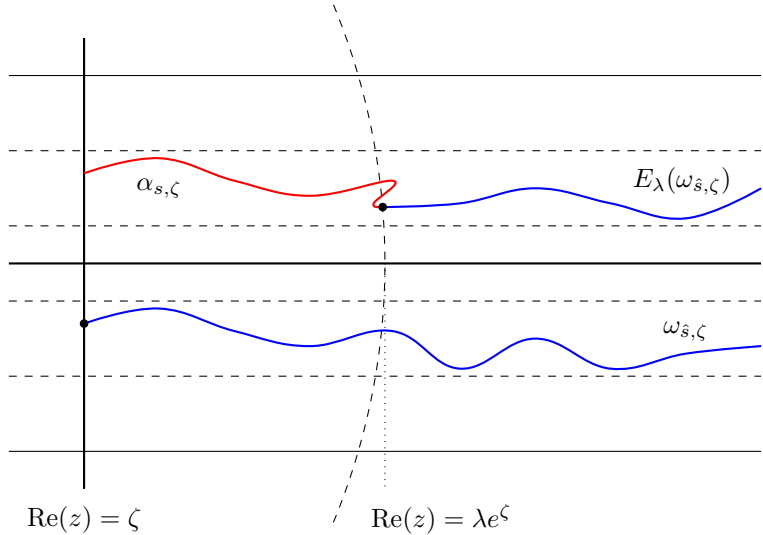
\begin{figure}[h]
    \centering
    \begin{tikzpicture}[font=\small]

        \draw [thick] (-1,0) -- (9,0);
        \draw [dashed] (-1,0.5) -- (9,0.5);
        \draw [dashed] (-1,-0.5) -- (9,-0.5);
        \draw [dashed] (-1,1.5) -- (9,1.5);
        \draw [dashed] (-1,-1.5) -- (9,-1.5);    
        \draw (-1,2.5) -- (9,2.5);
        \draw (-1,-2.5) -- (9,-2.5);

        \draw [thick] (0,-3) -- (0,3);
        \draw node at (0,-3.4) {$\Ree(z)=\zeta$};

        \centerarc[dashed](-5,0)(-22.5:22.5:9)
        
        \draw [dotted] (4,0) -- (4,-3);
        \draw node at (4.75,-3.4) {$\Ree(z)=\lambda e^\zeta$};

        \draw [thick, red] plot [smooth, tension=0.7] coordinates {(0,1.2) (1,1.4) (2, 1.1) (3, 0.9) (4.1, 1.1) (3.85, 0.8) (3.95, 0.75)};
        \draw [thick, blue] plot [smooth, tension=0.7] coordinates {(3.95, 0.75) (5, 0.8) (6, 1) (7, 0.8) (8, 0.6) (9, 1)};

        \draw [thick, blue] plot [smooth, tension=0.7] coordinates {(0,-0.8) (1,-0.6) (2, -0.9) (3, -1.1) (4.1, -0.9) (5, -1.4) (6, -1) (7, -1.4) (8, -1.2) (9, -1.1)};

        \filldraw (3.97, 0.75) circle (0.05);
        \filldraw (0,-0.8) circle (0.05);
        
        \draw node at (1,1) {$\alpha_{s,\zeta}$};
        \draw node at (7.95,1.1) {$\E(\omega_{\hat{s},\zeta})$};
        \draw node at (8,-0.9) {$\omega_{\hat{s},\zeta}$};

    \end{tikzpicture}
    \caption{The tail $\omega_{s,\zeta}$ and its base $\alpha_{s,\zeta}$.}
    \label{fig:Omega}
\end{figure}

We denote $\omega_{s,\zeta}$ and $\alpha_{s,\zeta}$ by $\omega_s$ and $\alpha_s$, respectively, when it does not lead to confusion. 

\begin{rem}\label{rem:Alpha}
    Let us notice that we have $\alpha_s = \gamma_s([\theta_s, F(\theta_{\hat{s}})))$. This is due to the \eqref{eq:Functional equation}.
\end{rem}

\section{Accumulating on itself -- targets}

In this section, we introduce the concept of a target. The properties of targets enable us to produce the very tangled hairs in the subsequent section. The accumulation set of such hairs turns out to be an indecomposable continuum. The definition of the target appears in \cite{DJ} but we modify it. We have to take into account the fact that we are interested in unbounded itineraries. Let us recall that we have fixed constants $M\in\N$ and $p\in\Z_+$. Therefore, we have fixed constant $\zeta$, but we impose some new restrictions on $\zeta$, which may cause it to increase.

For every $n\in\N$ let $a_n$ and $b_n$ be defined as
$$a_n = \inf\{\Ree(z): z\in\E^n\left(A_M^p\right)\} \hspace{1mm}\textrm{ and } \hspace{1mm} b_n = \E^{n+1}(\zeta) + 1,$$
where
$A_M^p = \bigcup_{s\in\Sigma_M^p}\alpha_s.$
From the above definition, it follows that $a_0=\zeta$. We have $a_n < a_{n+1}$ for every $n\in\N$. It follows from Proposition \ref{prop:Re increases}. Namely, for every $s\in\Sigma_M^p$ and every $z\in \E^n(\alpha_s)$ we have $\Ree(\E(z)) > \Ree(z)$.

\begin{lem}\label{lem:Re less than b}
  Provided $\zeta$ is large enough, for every $n\in \mathbb{N}$ and every $z\in A_M^p$, we have $\operatorname{Re}(\E^n(z)) \leqslant b_n$.
\end{lem}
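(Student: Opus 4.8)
The plan is to reduce the statement to a single one-step estimate at the base $A_M^p$ and then propagate it by the real dynamics of $\E$. The engine is the elementary inequality
$$\Ree(\E(w)) = \lambda e^{\Ree(w)}\cos(\Imm(w)) \leqslant \lambda e^{\Ree(w)} = \E(\Ree(w)),$$
where on the right-hand side $\E$ denotes the restriction of $\E$ to $\R$, which is increasing. Since $\E|_\R$ is monotone, a straightforward induction on $n$ then yields $\Ree(\E^n(z)) \leqslant \E^n(\Ree(z))$ for every $z\in\C$ and every $n\in\N$. Consequently, it will suffice to prove the base estimate
$$\Ree(z) \leqslant \E(\zeta) = \lambda e^\zeta \qquad \text{for every } z\in A_M^p,$$
because then $\Ree(\E^n(z)) \leqslant \E^n(\lambda e^\zeta) = \E^{n+1}(\zeta) < \E^{n+1}(\zeta)+1 = b_n$, which is even slightly stronger than required.

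To establish the base estimate, I would fix $z\in A_M^p$, so that $z\in\alpha_s$ for some $s\in\Sigma_M^p$. By Remark \ref{rem:Alpha} we may write $z=\gamma_s(\eta)$ with $\eta\in[\theta_s, F(\theta_{\hat s}))$, where $\hat s\in\sigma^{-1}(s)\cap\Sigma_M^p$. The right endpoint of this parameter interval is the point at which $\E(\omega_{\hat s,\zeta})$ begins: by the functional equation \eqref{eq:Functional equation},
$$\gamma_s(F(\theta_{\hat s})) = \E(\gamma_{\hat s}(\theta_{\hat s})).$$
Since $\Ree(\gamma_{\hat s}(\theta_{\hat s}))=\zeta$ by Lemma \ref{lem:Dzeta}, this gives $|\gamma_s(F(\theta_{\hat s}))| = \lambda e^{\Ree(\gamma_{\hat s}(\theta_{\hat s}))} = \lambda e^\zeta$; that is, the endpoint lies on the circle of radius $\lambda e^\zeta$ centred at the origin, so in particular its real part is at most $\lambda e^\zeta$. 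Thus the base estimate will follow once I know that $\Ree$ does not exceed this endpoint value anywhere on $\alpha_s$.

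The main point, and the place where the hypothesis that $\zeta$ is large is used, is therefore to show that $\Ree\circ\gamma_s$ is non-decreasing along $\alpha_s$, equivalently that the arc $\alpha_s$ lies (weakly) to the left of the vertical line through its right endpoint. I would deduce this from the parametrization \eqref{eq:Hair parametrization}: on the relevant range $\Ree(\gamma_s(\eta)) = \eta - \ln(\lambda) + \Ree(r_s(\eta))$, so it is enough to control the remainder derivative and ensure $\tfrac{d}{d\eta}\Ree(\gamma_s(\eta)) = 1 + \Ree(r_s'(\eta)) > 0$. Since $r_s$ is holomorphic in $\eta$ and, by \eqref{eq:Remainder} together with the uniform bound obtained in the proof of Proposition \ref{prop:Re increases}, is uniformly small on the tails once $\zeta$ is large, a Cauchy estimate bounds $|r_s'|$ by a quantity tending to $0$; choosing $\zeta$ large enough makes it smaller than $1$ and gives the monotonicity. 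With monotonicity in hand, $\Ree(z)=\Ree(\gamma_s(\eta)) \leqslant \Ree(\gamma_s(F(\theta_{\hat s}))) \leqslant \lambda e^\zeta$, which completes the base estimate and hence the lemma.

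I expect this monotonicity step to be the only genuine difficulty. The naive alternative of inserting the crude estimate $|r_s|<1$ into $\eta < F(\theta_{\hat s})$ only yields $\Ree(z)\leqslant \lambda e^\zeta + O(1)$, and such an additive $O(1)$ error would be amplified into an uncontrolled multiplicative factor after even a single further application of the exponential (since $\E(\lambda e^\zeta + c) = e^{c}\,\E^2(\zeta)$), and hence would blow up under iteration. This is exactly why the sharp endpoint-on-the-circle bound, rather than the approximate parametrization bound, is indispensable.
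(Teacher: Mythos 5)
Your reduction via $\Ree(\E(w))\leqslant\lambda e^{\Ree(w)}=\E(\Ree(w))$ and induction is correct, and it is essentially the same inequality the paper uses; the difference is that the paper applies it only to the single right endpoint $\gamma_{\sigma^n(s)}(F^{n+1}(\theta_{\hat s}))$ of each arc $\E^n(\alpha_s)$, whereas you apply it to every point of the base. As you yourself observe, your scheme then stands or falls with the \emph{sharp} base estimate $\Ree(z)\leqslant\lambda e^{\zeta}$ on all of $\alpha_s$, since any additive slack at the base would be exponentiated. The genuine gap is in how you obtain that sharp estimate. You reduce it to the monotonicity of $\eta\mapsto\Ree(\gamma_s(\eta))$ on $[\theta_s,F(\theta_{\hat s}))$ and propose to prove $|r_s'|<1$ by a Cauchy estimate. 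But $r_s$ is a function of the real parameter $\eta$ only; nothing in \eqref{eq:Hair parametrization}--\eqref{eq:Remainder}, nor in the results quoted from \cite{SZ}, provides a holomorphic extension of $r_s$ to a complex neighbourhood of the ray, so no Cauchy estimate is available, and uniform smallness of $r_s$ by itself gives no control whatsoever on $r_s'$. Note that the paper's own Figure \ref{fig:Omega} deliberately draws $\alpha_{s,\zeta}$ crossing the line $\Ree(z)=\lambda e^{\zeta}$ and doubling back, i.e.\ the authors do not claim this monotonicity --- and the $+1$ in the definition $b_n=\E^{n+1}(\zeta)+1$ exists precisely to absorb its failure.

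The paper's proof avoids the issue by never pushing a base bound through the exponential. At \emph{each} level $n$ it writes $\E^n(\alpha_s)=\gamma_{\sigma^n(s)}([F^n(\theta_s),F^{n+1}(\theta_{\hat s})))$ and compares an arbitrary point with the right endpoint through the parameter: after shrinking the uniform remainder bound to $\tfrac12$, both $\Ree(z)$ and $\Ree(\gamma_{\sigma^n(s)}(F^{n+1}(\theta_{\hat s})))$ lie within $\tfrac12$ of the corresponding parameter values, and the parameter of $z$ is the smaller one, so $\Ree(z)-\Ree(\gamma_{\sigma^n(s)}(F^{n+1}(\theta_{\hat s})))<1$; the endpoint's real part is in turn bounded by $\E^{n+1}(\zeta)$ by the separate induction using $\Ree\leqslant|\cdot|=\lambda e^{\Ree(\cdot)}$. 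Thus the additive error $+1$ is incurred afresh at every level and never iterated. To salvage your argument you would need an a priori uniform bound on $\gamma_s'$ over $s\in\Sigma_M^p$ (e.g.\ from derivative estimates accompanying Viana's smoothness result), which is a substantially stronger input than anything the paper invokes.
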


\begin{proof}
For every $s\in\Sigma_M^p$ we have $\omega_s= \gamma_s([\theta_s, +\infty)])$. For every $n\in\N$ the set $\E^n(\omega_s)$ is of the form $\E^n(\omega_s) = \gamma_{\sigma^n(s)}([F^n(\theta_s),+\infty))$. We have also $\E^n(\alpha_s) = \gamma_{\sigma^n(s)}([F^n(\theta_s), F^{n+1}(\theta_{\hat{s}})))$, where $\hat{s}$ is as in Definition \ref{def:Alpha}. 

Let us prove that for every $s\in\Sigma_M^p$ and every $n\in\N$ we have 
$$\Ree(\gamma_{\sigma^n(s)}(F^{n+1}(\theta_{\hat{s}}))) \leqslant \E^{n+1}(\zeta).$$
For $n=0$ the above inequality is of the form $\Ree(\gamma_{s}(F(\theta_{\hat{s}}))) \leqslant \E(\zeta)$. This is true because $\gamma_{s}(F(\theta_{\hat{s}})) = \E(\gamma_{\hat{s}}(\theta_{\hat{s}}))$, so $\gamma_{s}(F(\theta_{\hat{s}}))$ lies on the circle with centre at $0$ and radius $\E(\zeta)$. Now, let us assume that $\Ree(\gamma_{\sigma^k(s)}(F^{k+1}(\theta_{\hat{s}}))) \leqslant \E^{k+1}(\zeta)$ holds for some $k\in\N$. Then, 
\begin{align*}
    \Ree(\gamma_{\sigma^{k+1}(s)}(F^{k+2}(\theta_{\hat{s}}))) & \leqslant |\gamma_{\sigma^{k+1}(s)}(F^{k+2}(\theta_{\hat{s}}))| = |\E(\gamma_{\sigma^{k}(s)}(F^{k+1}(\theta_{\hat{s}})))| = \\
    & = \lambda \exp(\Ree(\gamma_{\sigma^{k}(s)}(F^{k+1}(\theta_{\hat{s}})))) \leqslant \\
    & \leqslant \lambda \exp(\E^{k+1}(\zeta)) = \E^{k+2}(\zeta),
\end{align*}
which completes the inductive proof.

Again, as in the proof of Proposition \ref{prop:Re increases}, by choosing $\zeta$ large enough we can bound $|r_{\sigma^n(s)}(\eta)|$ by the same constant for every $n$.  Let this constant be $\frac{1}{2}$. Now, let $z\in\E^n(\alpha_s)$. Then $z=\gamma_{\sigma^n(s)}(\eta)$ for some $\eta\in [F^n(\theta_s), F^{n+1}(\theta_{\hat{s}}))$. We have $|\Ree(z) - \eta| < \frac{1}{2}$, so $\Ree(z) < F^{n+1}(\theta_{\hat{s}}) + \frac{1}{2}$. Similarly, we have $|\Ree(\gamma_{\sigma^n(s)}(F^{n+1}(\theta_{\hat{s}}))) - F^{n+1}(\theta_{\hat{s}})| < \frac{1}{2}$, so $\Ree(\gamma_{\sigma^n(s)}(F^{n+1}(\theta_{\hat{s}}))) > F^{n+1}(\theta_{\hat{s}}) - \frac{1}{2}$. Thus, 
$$\Ree(z) - \Ree(\gamma_{\sigma^n(s)}(F^{n+1}(\theta_{\hat{s}}))) < 1.$$
So we have 
$$\Ree(z) - \E^{n+1}(\zeta) < 1,$$
which ends the proof as $b_n = \E^{n+1}(\zeta) + 1$.
\end{proof}

\begin{definition}
    Let $K\in\N$ and $a,b\in\R$ be such that $\zeta\leqslant a<b$. Let 
    $$ V(a,b, K) = \{ z\in \mathbb{C} : a-1 \leqslant \operatorname{Re}(z) \leqslant b+1,\textrm{ } |\operatorname{Im}(z)| \leqslant (2K+1)\pi\}.$$
\end{definition}

Lemma \ref{lem:Re less than b} and the definition of $a_n$ gives the following conclusion.
\begin{cor}\label{cor:Alpha in V}
    For every $s\in\Sigma_M^p$ we have $\E^n(\alpha_s)\subseteq V(a_n,b_n,M_n)$.
\end{cor}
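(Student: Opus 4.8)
The plan is to fix $s\in\Sigma_M^p$ and an arbitrary point $w\in\E^n(\alpha_s)$, write $w=\E^n(z)$ with $z\in\alpha_s$, and then verify the three inequalities defining membership in $V(a_n,b_n,M_n)$ separately: the lower bound $a_n-1\leqslant\Ree(w)$, the upper bound $\Ree(w)\leqslant b_n+1$, and the strip bound $|\Imm(w)|\leqslant(2M_n+1)\pi$. Since the sentence preceding the statement already advertises that the result "packages" Lemma \ref{lem:Re less than b} together with the definition of $a_n$, I expect each of the three checks to reduce to an observation that has already been established earlier, so the corollary should require no genuinely new estimate.

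For the real part I would argue as follows. The lower bound is immediate from the definition $a_n=\inf\{\Ree(z):z\in\E^n(A_M^p)\}$: because $z\in\alpha_s\subseteq A_M^p$, the point $w=\E^n(z)$ lies in $\E^n(A_M^p)$, hence $\Ree(w)\geqslant a_n>a_n-1$. The upper bound is exactly the content of Lemma \ref{lem:Re less than b}, which (for $\zeta$ chosen large enough) asserts $\Ree(\E^n(z))\leqslant b_n$ for every $z\in A_M^p$; thus $\Ree(w)\leqslant b_n\leqslant b_n+1$. This disposes of both real-part constraints with no computation beyond invoking the lemma and the definition.

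The imaginary-part bound is the one place where I would spell out a short argument about itineraries. Every point of $\E^n(\alpha_s)$ lies on the tail $\E^n(\omega_s)$, which by \eqref{eq:Functional equation} is the piece $\gamma_{\sigma^n(s)}([F^n(\theta_s),+\infty))$ of the hair with shifted itinerary $\sigma^n(s)$; hence $w\in\gamma_{\sigma^n(s)}\subseteq I_{\sigma^n(s)}\subseteq R_{(\sigma^n(s))_0}=R_{s_n}$. Because $s\in\Sigma_M^p$ we have $|s_n|\leqslant M+np=M_n$, and by definition of the strips the condition $w\in R_{s_n}$ forces $(2s_n-1)\pi<\Imm(w)\leqslant(2s_n+1)\pi$, so $|\Imm(w)|\leqslant(2M_n+1)\pi$. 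Combining the three bounds gives $w\in V(a_n,b_n,M_n)$, and since $w$ and $s$ were arbitrary the inclusion $\E^n(\alpha_s)\subseteq V(a_n,b_n,M_n)$ follows. I do not anticipate a real obstacle here; the only point requiring slight care is recording that points of $\E^n(\alpha_s)$ carry the itinerary $\sigma^n(s)$ and therefore sit in the strip indexed by $s_n$, which is what ties the vertical width of $V$ to the growth bound $M_n$ of $\Sigma_M^p$.
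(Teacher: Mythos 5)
Your proposal is correct and follows essentially the same route as the paper, which derives the corollary directly from Lemma \ref{lem:Re less than b} (for the upper bound on the real part) and the definition of $a_n$ as an infimum (for the lower bound), with the strip bound $|\Imm(w)|\leqslant(2M_n+1)\pi$ coming, exactly as you record, from the fact that $\E^n(\alpha_s)$ carries the itinerary $\sigma^n(s)$ and hence lies in $R_{s_n}$ with $|s_n|\leqslant M_n$. You merely make explicit the imaginary-part check that the paper leaves implicit; there is no substantive difference.
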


\begin{prop}\label{prop:E(V) covers V}
    Provided $\zeta$ is large enough, we have
    $$\E\left(V(a_{n},b_{n+k}, M_n)\right) \supseteq V(a_{n+1},b_{n+k+1}, M_{n+1})$$
    for every $n\in\N$ and every $k\in\N$.
\end{prop}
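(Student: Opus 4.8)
The plan is to use the explicit action of $\E$, namely $\E(x+iy) = \lambda e^{x}e^{iy}$, which sends each vertical line $\{\Ree(z)=c\}$ onto the circle of radius $\lambda e^{c}$ about $0$ and runs the argument once around that circle as $\Imm(z)$ increases by $2\pi$. First I would record the shapes of the two sets. The set $V(a,b,K)$ is a closed rectangle, and for $V_{1}:=V(a_{n},b_{n+k},M_{n})$ the imaginary extent equals $(4M_{n}+2)\pi\ge 2\pi$, because $M_{n}=M+np\ge 0$. Hence, for every admissible modulus the full range of arguments is realized, and
$$\E(V_{1})=\{w\in\C:\lambda e^{a_{n}-1}\le |w|\le \lambda e^{b_{n+k}+1}\}$$
is exactly a closed annulus. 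Writing $V_{2}:=V(a_{n+1},b_{n+k+1},M_{n+1})$, the asserted inclusion $\E(V_{1})\supseteq V_{2}$ is therefore equivalent to the metric statement that $V_{2}$ is contained in this annulus.

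Since $a_{n+1}-1\ge a_{0}-1=\zeta-1>0$, the rectangle $V_{2}$ lies in the right half-plane; its smallest modulus equals $a_{n+1}-1$ (attained on the real axis) and its largest modulus equals $\sqrt{(b_{n+k+1}+1)^{2}+((2M_{n+1}+1)\pi)^{2}}$ (attained at a corner). Thus it suffices to prove the two scalar inequalities
$$\lambda e^{a_{n}-1}\le a_{n+1}-1\qquad\text{and}\qquad \sqrt{(b_{n+k+1}+1)^{2}+((2M_{n+1}+1)\pi)^{2}}\le \lambda e^{b_{n+k}+1}.$$

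For the first inequality I would use that every point of $\E^{n}(A_{M}^{p})$ lies on a tail: by Remark \ref{rem:Alpha} and the computation in the proof of Lemma \ref{lem:Re less than b}, such a point has the form $\gamma_{\sigma^{n}(s)}(\eta)$, and by the parametrization \eqref{eq:Hair parametrization} (with remainder smaller than $1$, as arranged in Proposition \ref{prop:Re increases}) its imaginary part lies within $1$ of the even multiple of $\pi$ equal to $2\pi s_{n}$. Consequently $\cos(\Imm(z))\ge\cos 1>\tfrac{1}{e}$ while $\Ree(z)\ge a_{n}$, so that
$$a_{n+1}=\inf_{z\in\E^{n}(A_{M}^{p})}\lambda e^{\Ree(z)}\cos(\Imm(z))\ge \lambda e^{a_{n}}\cos 1,$$
and the first inequality reduces to $\lambda e^{a_{n}}(\cos 1-\tfrac{1}{e})\ge 1$, which holds as soon as $\zeta$ (hence $a_{n}\ge\zeta$) is large.

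The second inequality is where the unbounded setting genuinely differs from the bounded one, and is the main obstacle. From $b_{j}=\E^{j+1}(\zeta)+1$ one gets the recursion $b_{n+k+1}=\lambda e^{b_{n+k}-1}+1$, whence $\lambda e^{b_{n+k}+1}=e^{2}(b_{n+k+1}-1)$; squaring reduces the claim to $(B+1)^{2}+((2M_{n+1}+1)\pi)^{2}\le e^{4}(B-1)^{2}$ with $B=b_{n+k+1}$. Here the factor $e^{4}-1$ supplies ample room on the right, but one must dominate the term $((2M_{n+1}+1)\pi)^{2}$ \emph{uniformly in both $n$ and $k$}. This is possible precisely because $b_{n}=\E^{n+1}(\zeta)+1$ grows iterated-exponentially in $n$ whereas $M_{n+1}=M+(n+1)p$ grows only linearly: for $\zeta$ large each real iterate at least doubles, so $B\ge \E^{n+2}(\zeta)\ge 2^{n+2}\zeta$ outpaces the linear growth of $M_{n+1}$ for every $n$, and the inequality holds simultaneously for all $n,k\in\N$. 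In the bounded case $p=0$ the sequence $M_{n}$ is constant and this domination is immediate; the content of the proposition is that the slack afforded by the $e^{4}$ factor still suffices when $M_{n}$ is allowed to grow linearly.
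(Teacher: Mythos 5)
Your argument is correct, and it shares the paper's skeleton: $\E(V(a_n,b_{n+k},M_n))$ is the closed annulus of radii $\lambda e^{a_n-1}$ and $\lambda e^{b_{n+k}+1}$ because the rectangle's height is at least $2\pi$, and the inclusion reduces to comparing the nearest and farthest points of $V(a_{n+1},b_{n+k+1},M_{n+1})$ with those two radii. Where you differ is in how the two radius comparisons are carried out. The paper routes both through the $1$-vertical circle machinery: Lemma \ref{lem:Vertical circles} gives $a_{n+1}\geqslant \E(a_n)-1$, hence $a_{n+1}-1\geqslant\E(a_n-1)$, and the $1$-verticality of $\kappa(\E(b_{n+k}+1))$ at height $(2M_{n+1}+1)\pi$ shows that the right side of the target rectangle stays inside the outer circle, so the corner modulus is never computed explicitly. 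You instead bound the inner radius by the cosine estimate $a_{n+1}\geqslant\lambda e^{a_n}\cos 1$, justified via the parametrization \eqref{eq:Hair parametrization} with remainder less than $1$; this is weaker than the paper's $\E(a_n)-1$ for large $a_n$ but amply sufficient here. For the outer radius you compare the corner modulus directly with $e^{2}(b_{n+k+1}-1)$, exploiting that $b_j$ grows as an iterated exponential while $M_j$ grows only linearly, uniformly in $k$ since increasing $k$ only increases $b_{n+k+1}$. This is essentially the same quantitative content as the verticality condition $2r\geqslant y^{2}+1$ of Remark \ref{rem:Vertical circle}, unwound by hand. Your version is more self-contained for this single proposition; the paper's Lemmas \ref{lem:Const for vertical} and \ref{lem:Vertical circles} cost more up front but are reused later (for instance in Lemma \ref{lem:Limit set 3} and Lemma \ref{lem:Function g}), which is what the amortized approach buys.
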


To prove the above proposition we need to introduce a new definition and prove some lemmas.

\begin{definition}
    For $r>0$ let $\kappa(r)$ denote the circle with its centre at $0$ and radius~$r$.
    Let $y > 0$ and $0<\delta<r$. We say that the circle $\kappa(r)$ is $\delta$-vertical at height $y$ if
    $$\kappa(r) \cap \{z\in\C: \Imm(z) = y\} \cap \{z\in\C: \Ree(z) \geqslant 0\} \neq \varnothing$$
    and the only point $w$ belonging to this intersection satisfies
    $$\Ree(w) \geqslant r-\delta.$$
\end{definition}

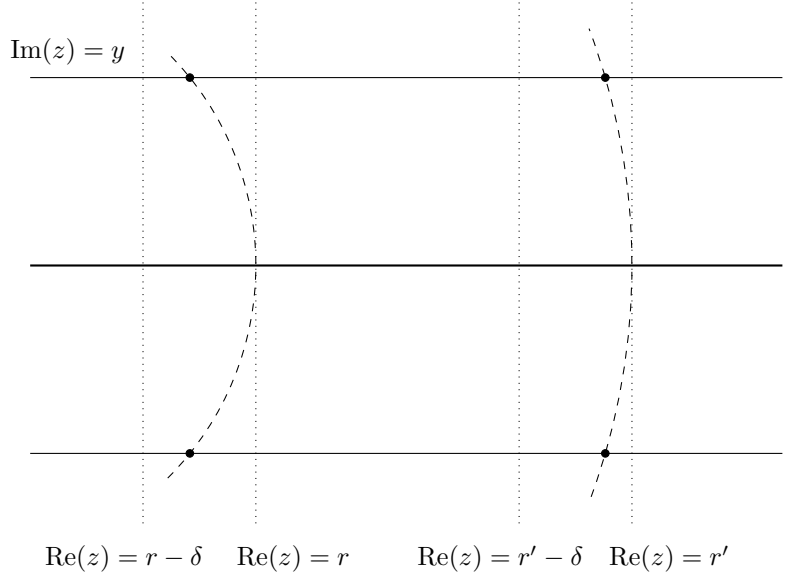
\begin{figure}
    \centering
    \begin{tikzpicture}[font=\small]
    
        \draw [thick] (-1,0) -- (9,0);        
        \draw (-1,2.5) -- (9,2.5);
        \draw node at (-0.5,2.85) {$\Imm(z) = y$};
        \draw (-1,-2.5) -- (9,-2.5);

        \centerarc[dashed](-2,0)(-45:45:4)

        \filldraw (1.1225, 2.5) circle (0.05);
        \filldraw (1.1225, -2.5) circle (0.05);

        \draw [dotted] (0.5,3.5) -- (0.5,-3.5);
        \draw node at (0.25,-3.9) {$\Ree(z)=r-\delta$};
        
        \draw [dotted] (2,3.5) -- (2,-3.5);
        \draw node at (2.5,-3.9) {$\Ree(z)=r$};

        \centerarc[dashed](-2,0)(-20:20.5:9)

        \filldraw (6.6458, 2.5) circle (0.05);
        \filldraw (6.6458, -2.5) circle (0.05);

        \draw [dotted] (7,3.5) -- (7,-3.5);
        \draw node at (7.5,-3.9) {$\Ree(z)=r'$};
        
        \draw [dotted] (5.5,3.5) -- (5.5,-3.5);
        \draw node at (5.25,-3.9) {$\Ree(z)=r'-\delta$};
    
    \end{tikzpicture}
    \caption{Two circles which are $\delta-$vertical at height $y$.}
    \label{fig:Vertical circle}
\end{figure}

\begin{rem}\label{rem:Vertical circle}
    Notice that the circle $\kappa(r)$ is $\delta$-vertical at height $y$ if and only if $$r \geqslant \frac{y^2+\delta^2}{2\delta}.$$ Thus, if the circle $\kappa(r)$ is $\delta$-vertical at height $y$ then for every $r'>r$, the circle $\kappa(r')$ is $\delta$-vertical at height $y$ too (see Figure \ref{fig:Vertical circle}).
\end{rem}

\begin{lem}\label{lem:Const for vertical}
    There exists $q>0$ such that for every $K\in\Z_+$ the following implication holds: if the circle $\kappa(r)$ is $1$-vertical at height $(2K+1)\pi$ for $r>1$, then the circle $\kappa(re^q)$ is $1$-vertical at height $(2(K+p) + 1)\pi$.
\end{lem}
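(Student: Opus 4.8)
The plan is to reduce this geometric statement to the arithmetic criterion supplied by Remark \ref{rem:Vertical circle} and then to produce a bound on a certain ratio that is uniform in $K$. Specializing that remark to $\delta = 1$, the circle $\kappa(\rho)$ is $1$-vertical at height $y$ precisely when $\rho \geq \frac{y^2+1}{2}$. Writing $y_1 = (2K+1)\pi$ and $y_2 = (2(K+p)+1)\pi = y_1 + 2p\pi$, the hypothesis that $\kappa(r)$ is $1$-vertical at height $y_1$ becomes $r \geq \frac{y_1^2+1}{2}$, while the conclusion that $\kappa(re^q)$ is $1$-vertical at height $y_2$ becomes $re^q \geq \frac{y_2^2+1}{2}$.

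Since $r \geq \frac{y_1^2+1}{2}$, it suffices to pick $q$ so that $e^q\cdot\frac{y_1^2+1}{2} \geq \frac{y_2^2+1}{2}$, that is, so that $e^q \geq \frac{y_2^2+1}{y_1^2+1}$ for every $K \in \Z_+$; then $re^q \geq \frac{y_1^2+1}{2}\cdot\frac{y_2^2+1}{y_1^2+1} = \frac{y_2^2+1}{2}$ as required. The heart of the matter is thus to bound this ratio uniformly in $K$. First I would use $K \geq 1$, hence $y_1 \geq 3\pi$, to estimate $y_2 = y_1 + 2p\pi \leq y_1\left(1 + \frac{2p}{3}\right)$, so that $y_2^2 \leq \left(1 + \frac{2p}{3}\right)^2 y_1^2$. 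As $\left(1 + \frac{2p}{3}\right)^2 \geq 1$, adding $1$ gives $y_2^2 + 1 \leq \left(1 + \frac{2p}{3}\right)^2 (y_1^2 + 1)$, whence
\begin{equation*}
\frac{y_2^2+1}{y_1^2+1} \leq \left(1 + \frac{2p}{3}\right)^2
\end{equation*}
independently of $K$. Consequently $q = 2\ln\!\left(1 + \frac{2p}{3}\right)$ (or any larger value) works, and $q > 0$ because $p \in \Z_+$.

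The only genuinely delicate point is obtaining a bound on $\frac{y_2^2+1}{y_1^2+1}$ that does not depend on $K$: a naive estimate would leave a $K$-dependent constant, but the observation that $y_1$ is bounded below by $3\pi$ lets the additive shift $2p\pi$ be absorbed into a single multiplicative factor. Everything else is a direct translation through the criterion of Remark \ref{rem:Vertical circle}; in particular the hypothesis $r > 1$ in the statement is automatically consistent, since $r \geq \frac{y_1^2+1}{2} \geq \frac{9\pi^2+1}{2} > 1$.
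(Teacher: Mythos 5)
Your proof is correct and follows essentially the same route as the paper: both reduce the statement via Remark \ref{rem:Vertical circle} to finding $q$ with $e^q \geq \frac{(2(K+p)+1)^2\pi^2+1}{(2K+1)^2\pi^2+1}$ uniformly in $K$. The only (minor) difference is that the paper obtains the uniform bound by asserting that this ratio is decreasing in $K$ and taking $e^q \geq c_0$, whereas you derive an explicit bound $\left(1+\frac{2p}{3}\right)^2$ from $y_1 \geq 3\pi$; both are valid.
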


\begin{proof}
According to \ref{rem:Vertical circle} we can write
\begin{equation}\label{eq:Const for vertical}
2r \geqslant (2K+1)^2\pi^2 + 1.
\end{equation}
To prove the thesis, we need to show that 
$$2re^q \geqslant (2(K+p)+1)^2\pi^2 + 1.$$
Using \eqref{eq:Const for vertical} it suffices to show that there exists $q>0$ such that for every $K\in\Z_+$
$$e^q \geqslant \frac{(2(K+p)+1)^2\pi^2 + 1}{(2K+1)^2\pi^2 + 1}.$$
Let us treat the expression on the right-hand side as elements of the sequence $(c_K)_{K\in\N}$. It is a decreasing sequence. Therefore, any $q$ satisfying $e^q \geqslant c_0$ is the desired number.
\end{proof}

\begin{lem}\label{lem:Vertical circles}
Provided $\zeta$ is large enough, the circle $\kappa(\E(a_n))$ is $1$-vertical at the height $(2M_{n+1} + 1)\pi$ for every $n\in\N$.
\end{lem}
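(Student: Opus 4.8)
The plan is to prove this by reducing the claim about the circle $\kappa(\E(a_n))$ being $1$-vertical at height $(2M_{n+1}+1)\pi$ to the explicit threshold criterion from Remark \ref{rem:Vertical circle}, and then to verify that our freedom to enlarge $\zeta$ is enough to push the radius $\E(a_n)$ past that threshold uniformly in $n$. By Remark \ref{rem:Vertical circle}, the circle $\kappa(\E(a_n))$ is $1$-vertical at height $(2M_{n+1}+1)\pi$ precisely when
\begin{equation*}
    \E(a_n) \geqslant \frac{(2M_{n+1}+1)^2\pi^2 + 1}{2}.
\end{equation*}
Since $\E(a_n) = \lambda e^{a_n}$ grows exponentially in $a_n$, while the right-hand side is only quadratic in $M_{n+1} = M + (n+1)p$, the estimate should hold comfortably once $a_n$ is large enough; so the whole task is to produce a good lower bound on $a_n$ in terms of $n$.

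First I would control $a_n$ from below. Recall $a_n = \inf\{\Ree(z): z\in\E^n(A_M^p)\}$ and $a_0 = \zeta$. The key is that the sequence $(a_n)$ grows roughly like the iterates $\E^n(\zeta)$: from the parametrization \eqref{eq:Hair parametrization}–\eqref{eq:Remainder} together with the functional equation \eqref{eq:Functional equation}, any point of $\E^n(\alpha_s)$ has real part within a fixed constant (say $\tfrac12$, as in the proof of Lemma \ref{lem:Re less than b}) of the parameter $\eta \in [F^n(\theta_s), F^{n+1}(\theta_{\hat s}))$, and these parameters are bounded below by $F^n(\theta_s) \geqslant F^n(\eta^*)$ uniformly over $s\in\Sigma_M^p$. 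Hence I would establish an inequality of the shape $a_n \geqslant F^n(\eta^*) - \tfrac12$, where $F(t) = e^t - 1$. Because $F^n(\eta^*)$ itself grows like an $n$-fold exponential tower, $\E(a_n) = \lambda e^{a_n} \geqslant \lambda e^{F^n(\eta^*) - 1/2}$ vastly dominates the quadratic-in-$n$ right-hand side above for every $n\in\N$, once $\eta^*$ (equivalently $\zeta$) is chosen large enough.

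It then remains to make this uniform-in-$n$ comparison rigorous. I would treat the inequality $\lambda e^{a_n} \geqslant \tfrac12\big((2M_{n+1}+1)^2\pi^2 + 1\big)$ by splitting off the base case and using monotonicity of $F$: one shows that the left side, as a function of $n$, grows strictly faster than the right side, so that verifying the inequality for $n=0$ (which one can force by taking $\zeta$ large) already secures it for all $n$ via an induction comparing the multiplicative growth factor $e^{a_{n+1}-a_n}$ (which is at least $e^{F^{n+1}(\eta^*)-F^n(\eta^*)}$, enormous) against the ratio $(2M_{n+2}+1)^2/(2M_{n+1}+1)^2$ of consecutive right-hand sides (which tends to $1$ and is bounded). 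The main obstacle is bookkeeping: the lower bound on $a_n$ must be genuinely uniform over all $s\in\Sigma_M^p$ and compatible with the earlier choices of $\zeta$ made in Proposition \ref{prop:Re increases} and Lemma \ref{lem:Re less than b}, so care is needed to ensure that enlarging $\zeta$ here does not disturb those properties — but since all those conditions are of the form ``$\zeta$ sufficiently large,'' taking the maximum of the finitely many thresholds resolves the conflict, and the exponential-versus-quadratic gap does the rest.
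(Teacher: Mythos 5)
Your proposal is correct in substance but follows a genuinely different route from the paper. The paper proves the lemma by induction in $n$: the base case $\kappa(\E(a_0))=\kappa(\E(\zeta))$ is handled by taking $\zeta$ large, and in the inductive step the $1$-verticality of $\kappa(\E(a_n))$ at height $(2M_{n+1}+1)\pi$ is itself used to derive the recursion $a_{n+1}\geqslant \E(a_n)-1\geqslant a_n+q$ (having arranged $\E(x)\geqslant x+q+1$ for $x\geqslant\zeta$), whence $\E(a_{n+1})\geqslant \E(a_n)e^q$ and Lemma \ref{lem:Const for vertical} propagates the verticality to height $(2M_{n+2}+1)\pi$. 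You instead reduce to the explicit threshold $\E(a_n)\geqslant \tfrac12\bigl((2M_{n+1}+1)^2\pi^2+1\bigr)$ from Remark \ref{rem:Vertical circle} and obtain an \emph{absolute} lower bound $a_n\gtrsim F^n(\eta^*)$ from the parametrization \eqref{eq:Hair parametrization} and the functional equation, then beat the quadratic right-hand side directly. Both work; the paper's argument stays entirely inside the target machinery and needs only the crude monotone recursion, while yours re-invokes the Schleicher--Zimmer asymptotics but yields a much stronger quantitative bound on $a_n$ and, in its direct (non-inductive) form, handles all $n$ in one stroke since $F^n(\eta^*)\geqslant \eta^*+2n$ already suffices against a quadratic in $n$. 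Two small points to tidy up: the real part of $\gamma_{\sigma^n(s)}(\eta)$ is within $\tfrac12$ of $\eta-\ln\lambda$, not of $\eta$, so your lower bound on $a_n$ carries the harmless fixed offset $\ln\lambda$; and the parenthetical claim $a_{n+1}-a_n\geqslant F^{n+1}(\eta^*)-F^n(\eta^*)$ does not follow from lower bounds on $a_n$ and $a_{n+1}$ alone --- you also need the matching upper bound $a_n\leqslant \inf_s F^n(\theta_s)-\ln\lambda+\tfrac12$ (which the parametrization does give), or better, drop the final induction altogether and verify the threshold inequality for each $n$ directly from $a_n\geqslant F^n(\eta^*)-C$.
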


\begin{proof}
We prove this by induction. For a fixed height $y > 0$ and any $\delta > 0$, there exists a sufficiently large $r$ such that the circle $\kappa(r)$ is $\delta$-vertical at height $y$. Therefore, by choosing $\zeta$ sufficiently large and noting that $a_0 = \zeta$, we can ensure that the circle $\kappa(\E(a_0))$ is $1$-vertical at any given height, especially at the height $(2M_1 + 1)\pi$.
Additionally, we can choose $\zeta$ such that for every $x\geqslant\zeta$, we have
$$\E(x) \geqslant x + q + 1,$$
where $q$ is the number obtained in Lemma \ref{lem:Const for vertical}.
Now, assume that the circle $\kappa(\E(a_n))$ is $1$-vertical at height $(2M_{n+1} + 1)\pi$. Then, based on how the numbers $a_n$ are defined, we have
$a_{n+1} \geqslant \E(a_n) - 1.$
Therefore
$a_{n+1} \geqslant a_n+q.$
So, we have
$\E(a_{n+1}) \geqslant \E(a_n)\cdot e^q.$
Thus, to show that the circle $\kappa(\E(a_{n+1}))$ is $1$-vertical at height $(2M_{n+2} + 1)\pi$, it suffices to show that the circle $\kappa(\E(a_n)\cdot e^q)$ is $1$-vertical at height $(2M_{n+2} + 1)\pi$. However, this follows from Lemma \ref{lem:Const for vertical} and the inductive assumption and the fact that $M_{n+1} + p = M_{n+2}$.
\end{proof}

\begin{figure}[h]
    \centering
    \begin{tikzpicture}[font=\small]
    
        \draw (-1,2) -- (9,2);
        \draw (-1,-2) -- (9,-2);
        
        \draw node at (-1.4,2) {\small $M_n$};

        \draw (-1,3.5) -- (9,3.5);
        \draw node at (-1.5,3.5) {\small $M_{n+1}$};
        \draw (-1,-3.5) -- (9,-3.5);

        \draw [dotted] (1,4) -- (1,-4);
        \draw node at (0.9,-4.3) {\small $a_n - 1$};
        
        \draw [dotted] (3,4) -- (3,-4);
        \draw node at (2.9,-4.3) {\small $b_{n+k}+1$};

        \draw [dotted] (6,4) -- (6,-4);
        \draw node at (5.9,-4.3) {\small $a_{n+1} - 1$};
        
        \draw [dotted] (8,4) -- (8,-4);
        \draw node at (7.9,-4.3) {\small $b_{n+k+1} + 1$};

        \fill [pattern={Lines[distance=1mm, angle=45, line width=0.1mm]}, pattern color=red, opacity = 0.5] (1,-2) rectangle (3,2);

        \draw node at (-0.5,-1) {\small $V(a_n,b_{n+k},M_n)$};
        \draw [-stealth](0.6,-1) -- (2,-1);

        \fill [pattern={Lines[distance=1mm, angle=45, line width=0.1mm]}, pattern color=blue, opacity = 0.5] (6,-3.5) rectangle (8,3.5);

        \draw node at (2.86,-3) {\small $V(a_{n+1},b_{n+k+1},M_{n+1})$};
        \draw [-stealth](4.5,-3) -- (7,-3);

        \centerarc[dashed](-5,0)(-21.5:21.5:10.5)
        \centerarc[dashed](-5,0)(-16:16:14)

        \draw node at (4.7,4.2) {\small $|z| = \E(a_n-1)$};
        \draw node at (9.2,4.2) {\small $|z| = \E(b_{n+k}+1)$};
        
    \end{tikzpicture}
    \caption{Rectangles $V(a_n,b_{n+k},M_n)$, $V(a_{n+1},b_{n+k+1},M_{n+1})$ and part of image of $V(a_n,b_{n+k},M_n)$.}
    \label{fig:E(V) covers V}
\end{figure}
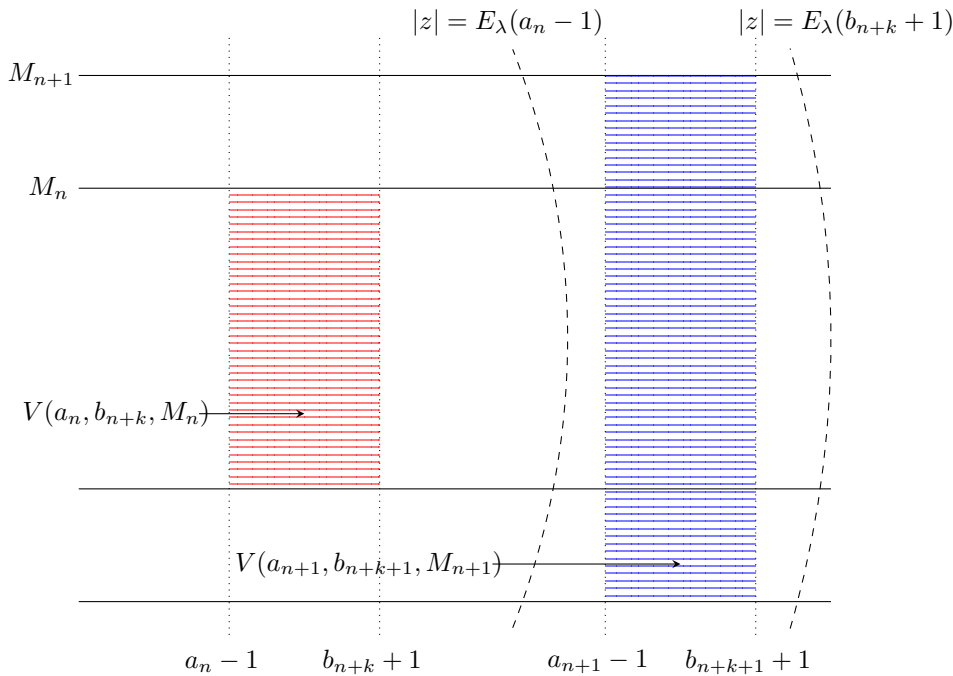

\begin{proof}[Proof of Proposition \ref{prop:E(V) covers V}]
Let us fix any $n\in\N$. The image of the set $V(a_n,b_{n+k},M_n)$ under the function $\E$ is an annulus centred at $0$, with an inner radius of $\E(a_n-1)$ and an outer radius of $\E(b_{n+k}+1)$. From Lemma \ref{lem:Vertical circles} we know that the circle $\kappa(\E(a_n))$ is $1$-vertical at height $(2M_{n+1} + 1)\pi$. Therefore, by invoking the construction of the numbers $a_n$, we deduce that 
$a_{n+1} \geqslant \E(a_n) - 1.$
Therefore,
$a_{n+1} - 1\geqslant \E(a_n) - 2.$
We can choose $\zeta$ large enough so that for every $x\geqslant\zeta$ we have 
$\E(x) \geqslant \E(x-1) + 2.$
Thus we obtain the inequality
$a_{n+1} - 1 \geqslant \E(a_n - 1).$
This means that the rectangle $V(a_{n+1},b_{n+k+1},M_{n+1})$ is contained within the set $\{z\in\C: |z| \geqslant \E(a_n - 1)\}$ (see Figure \ref{fig:E(V) covers V}).
To complete the proof, we need to show that the rectangle $V(a_{n+1},b_{n+k+1},M_{n+1})$ is contained within the closure of $B(0, \E(b_{n+k} + 1))$. Notice that 
$$\E(b_{n+k} + 1 ) = e\E(b_{n+k}) = e^2\E^{n+k+2}(\zeta) = e^2b_{n+k+1} - e^2.$$ For sufficiently large $\zeta$ we have 
$e^2x - e^2 \geqslant x + 2$
for every $x\geqslant\zeta.$
Therefore,
$e^2b_{n+k+1} - e^2 \geqslant b_{n+k+1} + 2.$
Thus, $\E(b_{n+k} + 1 ) -1 \geqslant b_{n+k+1} + 1.$ 
The circle $\kappa(\E(a_n))$ is $1$-vertical at height $(2M_{n+1} +1)\pi$. Since $a_n<b_{n+k}$, the circle $\kappa(\E(b_{n+k} + 1))$ is also $1$-vertical at height $(2M_{n+1} +1)\pi$. Therefore, the segment that forms the right side of the rectangle $V(a_{n+1}, b_{n+k+1}, M_{n+1})$, i.e. $\{z\in\C: \Ree(z) = b_{n+k+1} + 1,\textrm{ } |\Imm(z)| \leq (2M_{n+1} + 1)\pi\}$, is disjoint from the exterior of the circle $\kappa(\E(b_{n+k} + 1))$ (see Figure \ref{fig:E(V) covers V}),  which completes the proof.
\end{proof}

\begin{cor}\label{cor:Limit of a_n}
    We have $\lim_{n\to +\infty} a_n = +\infty$.
\end{cor}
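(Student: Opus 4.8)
The plan is to show that $\lim_{n\to+\infty} a_n = +\infty$ by extracting a uniform lower bound on the growth increments $a_{n+1} - a_n$ from the work already done in the preceding lemmas. The key observation is that the proof of Lemma \ref{lem:Vertical circles} already establishes, for $\zeta$ chosen large enough, the two chained inequalities $a_{n+1} \geqslant \E(a_n) - 1$ and $\E(x) \geqslant x + q + 1$ for every $x \geqslant \zeta$, where $q>0$ is the constant furnished by Lemma \ref{lem:Const for vertical}. Combining these, as was done there, yields $a_{n+1} \geqslant a_n + q$ for every $n\in\N$.

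From here the conclusion is immediate. First I would note that $a_0 = \zeta \geqslant 0$, so by induction $a_n \geqslant \zeta \geqslant \zeta$ for all $n$, which in particular keeps every $a_n$ in the range $[\zeta,\infty)$ where the inequality $\E(x) \geqslant x + q + 1$ is valid; this guarantees the recursive bound $a_{n+1} \geqslant a_n + q$ applies at every step. Iterating the bound gives
$$a_n \geqslant a_0 + nq = \zeta + nq$$
for every $n\in\N$. Since $q > 0$ is a fixed positive constant, the right-hand side tends to $+\infty$ as $n\to+\infty$, and therefore so does $a_n$.

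There is really no main obstacle here, since all the analytic content has been front-loaded into Lemma \ref{lem:Vertical circles} and Lemma \ref{lem:Const for vertical}: the corollary is a one-line consequence of the uniform additive increment $q$. The only point requiring a modicum of care is to confirm that the hypotheses supporting $a_{n+1}\geqslant a_n + q$ hold for \emph{every} $n$ simultaneously, which follows because $\zeta$ has already been fixed large enough for Lemmas \ref{lem:Vertical circles} and \ref{prop:E(V) covers V} to hold for all $n\in\N$, and because the sequence $(a_n)$ never leaves the half-line $[\zeta,+\infty)$ on which the exponential lower bound was imposed. Thus the arithmetic-progression lower bound $a_n \geqslant \zeta + nq$ is valid throughout, and the corollary follows.
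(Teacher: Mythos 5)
Your proof is correct and follows exactly the route the paper intends: the corollary is stated without proof precisely because the inequality $a_{n+1}\geqslant a_n+q$ was already derived in the proof of Lemma \ref{lem:Vertical circles}, and iterating it gives $a_n\geqslant\zeta+nq\to+\infty$. The only blemish is the typo ``$a_n\geqslant\zeta\geqslant\zeta$'' (you mean $a_n\geqslant a_0=\zeta$), which does not affect the argument.
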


\begin{definition}\label{def:Alpha^n}
    Let $\alpha_s^0 = \alpha_s$, which is the base of the tail $\omega_s$. For $n \geqslant 1$ let
    $$\alpha_s^n = \alpha_s^{n-1} \cup \E^n(\alpha_{\hat{s}^n}),$$
    where $\hat{s}^n$ is any element from the set $\sigma^{-n}(s)$ that belongs to $\Sigma_M^p$, e.g. $\hat{s}^n = 0_n s$, which is the itinerary of the form $(000\ldots 0 s_0 s_1 s_2 \ldots)$ -- $n$ zeros followed by the sequence $s$.
\end{definition}

\begin{lem}
    For every $s\in\Sigma_M^p$ and every $n\in\mathbb{N}$, we have $\alpha_s^{n} \subseteq \alpha_s^{n+1}$. Moreover, $\bigcup_{n\in\mathbb{N}} \alpha_s^n = \omega_s$.
\end{lem}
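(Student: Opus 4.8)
The plan is to treat the two claims separately. The inclusion $\alpha_s^n\subseteq\alpha_s^{n+1}$ is immediate from Definition~\ref{def:Alpha^n}: by construction $\alpha_s^{n+1}=\alpha_s^n\cup\E^{n+1}(\alpha_{\hat{s}^{n+1}})$, so $\alpha_s^n$ is simply one of the terms in this union. The substance of the lemma is therefore the equality $\bigcup_{n}\alpha_s^n=\omega_s$, which I would prove by describing each set $\E^n(\alpha_{\hat{s}^n})$ explicitly as a half-open sub-arc of $\gamma_s$ and checking that these arcs exhaust the whole tail.

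First I would fix the canonical choices $\hat{s}^n=0_n s$ offered in Definition~\ref{def:Alpha^n}, so that $\sigma(\hat{s}^{n+1})=\hat{s}^n$ and $\sigma^n(\hat{s}^n)=s$; in particular $\hat{s}^{n+1}$ is admissible as the preimage used to form the base $\alpha_{\hat{s}^n}$. By Remark~\ref{rem:Alpha} this base is $\alpha_{\hat{s}^n}=\gamma_{\hat{s}^n}([\theta_{\hat{s}^n},F(\theta_{\hat{s}^{n+1}})))$. Applying the functional equation~\eqref{eq:Functional equation} $n$ times (legitimate since every intermediate parameter stays above the uniform bound $\eta^*$) gives $\E^n(\gamma_{\hat{s}^n}(\eta))=\gamma_s(F^n(\eta))$, and hence
$$\E^n(\alpha_{\hat{s}^n})=\gamma_s\big([F^n(\theta_{\hat{s}^n}),F^{n+1}(\theta_{\hat{s}^{n+1}}))\big).$$
The key observation is that consecutive arcs chain up exactly: the right endpoint $F^{n+1}(\theta_{\hat{s}^{n+1}})$ of the $n$-th arc coincides with the left endpoint of the $(n+1)$-th arc. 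Telescoping the union therefore yields $\alpha_s^n=\gamma_s([\theta_s,F^{n+1}(\theta_{\hat{s}^{n+1}})))$.

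It then remains to let $n\to\infty$. Since every $\hat{s}^{n+1}=0_{n+1}s$ lies in $\Sigma_M^p$, the parameters $\theta_{\hat{s}^{n+1}}$ are bounded below by the uniform constant $\eta^*>0$; and since $F(t)=e^t-1>t$ for $t>0$, the map $F$ has no fixed point on $(0,+\infty)$, so the increasing sequence $F^{n+1}(\theta_{\hat{s}^{n+1}})\geqslant F^{n+1}(\eta^*)$ must diverge to $+\infty$. Consequently $\bigcup_{n}\alpha_s^n=\gamma_s([\theta_s,+\infty))=\omega_s$. I expect the only delicate point to be the exact matching of the endpoints in the telescoping step, which is precisely why the preimages must be chosen consistently as $\hat{s}^{n}=0_n s$; once this is arranged, both the chaining of the arcs and the divergence of the $F$-iterates are routine.
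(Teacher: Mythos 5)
Your proposal is correct and follows essentially the same route as the paper: the inclusion is read off from Definition~\ref{def:Alpha^n}, and the union is computed by identifying $\alpha_s^n$ with the parameter arc $\gamma_s([\theta_s, F^{n+1}(\theta_{\hat{s}^{n+1}})))$ via Remark~\ref{rem:Alpha}, the functional equation, and a telescoping induction. The only (harmless) difference is the final divergence step: the paper deduces $F^{n+1}(\theta_{\hat{s}^{n+1}})\to+\infty$ from $\E^n(\alpha_s)\subseteq\{z\in\C:\Ree(z)\geqslant a_n\}$ together with Corollary~\ref{cor:Limit of a_n}, whereas you use the more elementary observation that $F$ is increasing with no fixed point on $(0,+\infty)$, so $F^{n+1}(\theta_{\hat{s}^{n+1}})\geqslant F^{n+1}(\eta^*)\to+\infty$.
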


\begin{proof}
The property $\alpha_s^{n} \subseteq \alpha_s^{n+1}$ follows directly from the Definition \ref{def:Alpha^n}.
Using Remark \ref{rem:Alpha} we can write 
$$\alpha_s^0 = \gamma_s([\theta_s, F(\theta_{\hat{s}^1})))$$
and 
$$\E^n(\alpha_s) = \gamma_{\sigma^n(s)}([F^n(\theta_s), F^{n+1}(\theta_{\hat{s}^1})))$$
for every $s\in\Sigma_M^p$ and every $n\in\N$. Let us prove that we can represent $\alpha_s^n$ as 
$$\alpha_s^n = \gamma_s([\theta_s, F^{n+1}(\theta_{\hat{s}^{n+1}}))),$$
where $\hat{s}^n$ is as in Definition \ref{def:Alpha^n}. For $n=0$ we have $\alpha_s^0 = \gamma_s([\theta_s, F(\theta_{\hat{s}^1})))$. Let us assume that $\alpha_s^k = \gamma_s([\theta_s, F^{k+1}(\theta_{\hat{s}^{k+1}})))$ holds for some $k\in\N$. Then

$$\alpha_s^{k+1} = \gamma_s([\theta_s, F^{k+1}(\theta_{\hat{s}^{k+1}}))) \cup \E^{k+1}(\alpha_{\hat{s}^{k+1}}).$$
Since
\begin{align*}
    \E^{k+1}(\alpha_{\hat{s}^{k+1}}) & = \gamma_{\sigma^{k+1}(\hat{s}^{k+1})}([F^{k+1}(\theta_{\hat{s}^{k+1}}), F^{k+2}(\theta_{\hat{s}^{k+2}}))) \\
    & = \gamma_{s}([F^{k+1}(\theta_{\hat{s}^{k+1}}), F^{k+2}(\theta_{\hat{s}^{k+2}}))),
\end{align*}
we get $\alpha_s^{k+1} = \gamma_s([\theta_s, F^{k+2}(\theta_{\hat{s}^{k+2}})))$. 
Now it is enough to note that with $n\to +\infty$ we have $F^{n}(\theta_{\hat{s}^{n}}) \to +\infty$. This is because the point $F^{n}(\theta_{\hat{s}^{n}}) + 2\pi i s_0$ is close to  point $\gamma_s(F^{n}(\theta_{\hat{s}^{n}}))$ and $\gamma_s(F^{n}(\theta_{\hat{s}^{n}}))\in \E^n(\alpha_s) \subseteq \{z\in\C: \Ree(z)\geqslant a_n\}$ for every $s\in\Sigma_M^p$ and every $n\in\N$.    
\end{proof}

\begin{prop}\label{prop:Family P}
Let $k\in\N$ and let $s=(s_0s_1s_2 \ldots)\in\Sigma_M^p$. The family of sets
$$P_s^n = L_{\lambda, s_0} \circ L_{\lambda, s_1} \circ \ldots \circ L_{\lambda, s_{n-1}}\left(V(a_n, b_{n+k}, M_n)\right)$$
indexed by $n\in\Z_+$ is a decreasing family of subsets of $V(\zeta, b_k, M_0)$. Additionally,
$$\alpha_s^k\subseteq\displaystyle\bigcap_{n=1}^\infty P_s^n\subseteq \gamma_s.$$
\end{prop}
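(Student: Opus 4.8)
The plan is to establish the four assertions in turn --- that $(P_s^n)_n$ is decreasing, that each $P_s^n\subseteq V(\zeta,b_k,M_0)$, that $\alpha_s^k\subseteq\bigcap_n P_s^n$, and that $\bigcap_n P_s^n\subseteq\gamma_s$ --- using throughout the basic observation that, since $|s_j|\leqslant M_j$ for $s\in\Sigma_M^p$, the strip $R_{s_j}$ lies inside the imaginary band $\{|\Imm(z)|\leqslant(2M_j+1)\pi\}$ of the corresponding rectangle, so that $L_{\lambda,s_j}$ is exactly the inverse of $\E$ restricted to that strip.

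First I would prove monotonicity. Writing $\Phi_n=L_{\lambda,s_0}\circ\cdots\circ L_{\lambda,s_{n-1}}$, we have $P_s^{n+1}=\Phi_n\bigl(L_{\lambda,s_n}(V(a_{n+1},b_{n+k+1},M_{n+1}))\bigr)$, so it suffices to check $L_{\lambda,s_n}(V(a_{n+1},b_{n+k+1},M_{n+1}))\subseteq V(a_n,b_{n+k},M_n)$. By Proposition \ref{prop:E(V) covers V} we have $\E(V(a_n,b_{n+k},M_n))\supseteq V(a_{n+1},b_{n+k+1},M_{n+1})$; moreover $\E$ maps the single-strip sub-rectangle $V(a_n,b_{n+k},M_n)\cap R_{s_n}$ onto the \emph{full} annulus $\{\E(a_n-1)\leqslant|w|\leqslant\E(b_{n+k}+1)\}$, which (exactly as in the proof of Proposition \ref{prop:E(V) covers V}) contains $V(a_{n+1},b_{n+k+1},M_{n+1})$. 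Since $L_{\lambda,s_n}$ inverts $\E|_{R_{s_n}}$, the inclusion follows, and applying $\Phi_n$ gives $P_s^{n+1}\subseteq P_s^n$. The same computation with $n=0$ yields $P_s^1\subseteq V(a_0,b_k,M_0)=V(\zeta,b_k,M_0)$, and monotonicity propagates this to all $n$.

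Next I would prove $\alpha_s^k\subseteq\bigcap_n P_s^n$ using the decomposition $\alpha_s^k=\alpha_s\cup\bigcup_{j=1}^k\E^j(\alpha_{\hat s^j})$ from Definition \ref{def:Alpha^n}. Fix $n\geqslant1$ and $z\in\alpha_s^k$. As $\alpha_s^k\subseteq\gamma_s\subseteq I_s$, the point $\E^n(z)$ lies in $R_{s_n}$, so $|\Imm(\E^n(z))|\leqslant(2M_n+1)\pi$. For the real part, if $z\in\alpha_s$ then $\E^n(z)\in V(a_n,b_n,M_n)$ by Corollary \ref{cor:Alpha in V} and $\Ree(\E^n(z))\geqslant a_n$ by the definition of $a_n$; if $z=\E^j(w)$ with $w\in\alpha_{\hat s^j}$ and $1\leqslant j\leqslant k$, then $\E^n(z)=\E^{n+j}(w)\in V(a_{n+j},b_{n+j},M_{n+j})$, whence $a_n\leqslant a_{n+j}\leqslant\Ree(\E^n(z))\leqslant b_{n+j}+1\leqslant b_{n+k}+1$ by monotonicity of $(a_m)$, $(b_m)$ and $j\leqslant k$. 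In every case $\E^n(z)\in V(a_n,b_{n+k},M_n)$, and since $z\in I_s$ we may reconstruct $z=\Phi_n(\E^n(z))\in P_s^n$. As $n$ is arbitrary, $\alpha_s^k\subseteq\bigcap_n P_s^n$.

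The hard part will be the last inclusion $\bigcap_n P_s^n\subseteq\gamma_s$. From the identity $P_s^n=\{z:\E^j(z)\in R_{s_j}\ (0\leqslant j<n),\ \E^n(z)\in V(a_n,b_{n+k},M_n)\}$ one reads off $\bigcap_n P_s^n=\{z\in I_s:\E^n(z)\in V(a_n,b_{n+k},M_n)\ \forall n\geqslant1\}$. For such $z$ the lower bound $\Ree(\E^n(z))\geqslant a_n-1$ together with Corollary \ref{cor:Limit of a_n} forces $\E^n(z)\to\infty$, so $z\in\Gamma_s$. It then remains to upgrade $z\in\Gamma_s$ to $z\in\gamma_s=\psi|_{(0,\infty)}$, that is, to exclude the possibility that $z$ is an escaping endpoint $\psi(0)$. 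For a genuine hair point the potential iterates to $+\infty$ under $F$ via \eqref{eq:Functional equation}, so $\E^N(z)$ eventually enters a tail $\omega_{\sigma^N(s),\zeta_N}$ (as $\theta_{\sigma^N(s)}$, like $\zeta_N$, grows only linearly while $F^N$ grows like an iterated exponential), and then $z=\Phi_N(\E^N(z))\in\Phi_N(\omega_{\sigma^N(s),\zeta_N})\subseteq\gamma_s$. The endpoint $\psi(0)$ instead maps under $\E^n$ to the endpoint of $\gamma_{\sigma^n(s)}$, which sits at potential $0$ and never reaches a tail. I expect the genuine obstacle to be showing that this endpoint is incompatible with the lower bound $\Ree(\E^n(z))\geqslant a_n-1$: here one must use that $a_n$ is governed by the iterates of the bases $\alpha_u$, all of which start from strictly positive potential $\theta_u$, so that a point escaping from potential $0$ escapes strictly more slowly and eventually violates the bound. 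Quantifying this escape-rate comparison, via the expansion estimates of Lemma \ref{lem:Mis} and Theorem \ref{thm:Koebe}, is the delicate step on which the whole inclusion rests.
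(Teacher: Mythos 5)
Your proof follows the paper's argument essentially step for step. For monotonicity and containment in $V(\zeta,b_k,M_0)$ the paper simply cites Proposition \ref{prop:E(V) covers V}; you fill in the point that actually makes this work, namely that $\E$ maps $V(a_n,b_{n+k},M_n)\cap R_{s_n}$ \emph{onto} the full annulus $\{\E(a_n-1)\leqslant|w|\leqslant\E(b_{n+k}+1)\}$ so that the specific branch $L_{\lambda,s_n}$ sends $V(a_{n+1},b_{n+k+1},M_{n+1})$ back into $V(a_n,b_{n+k},M_n)$ --- this is correct and is implicitly what the paper uses. Your proof of $\alpha_s^k\subseteq\bigcap_n P_s^n$ is the paper's: decompose $\alpha_s^k$ into the pieces $\E^j(\alpha_{\hat s^j})$, apply Corollary \ref{cor:Alpha in V}, and tighten the imaginary bound from $M_{n+j}$ to $M_n$ using $\E^n(z)\in R_{s_n}$ (the paper phrases this as replacing $V(a_{n+q},b_{n+q},M_{n+q})$ by $V(a_{n+q},b_{n+q},M_n)$). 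For the last inclusion you and the paper both reach the same point: $z\in\bigcap_n P_s^n$ has itinerary $s$ and escapes because $\Ree(\E^n(z))\geqslant a_n-1\to\infty$.

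The one place you diverge is the final step. The paper concludes ``therefore $z$ lies on $\gamma_s$'' directly from $z\in\Gamma_s$, silently identifying $\Gamma_s$ with $\gamma_s=\psi|_{(0,+\infty)}$; you rightly point out that this leaves open the case where $\Gamma_s=\psi([0,+\infty))$ and $z$ is the escaping endpoint $\psi(0)$, and this concern is not vacuous, since Proposition \ref{prop:Not continuum} exhibits fast itineraries in $\Sigma_M^p$, and for fast itineraries the hair does land at an escaping point by \cite[Corollary 6.9]{SZ}. However, you then declare the exclusion of the endpoint ``the delicate step on which the whole inclusion rests'' and do not carry it out, so as written your proof of $\bigcap_n P_s^n\subseteq\gamma_s$ is incomplete by your own admission. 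Two ways to close it: (i) for the itineraries actually used later (long inserted zero blocks), $s$ is not fast --- at the start of each long zero block no $k$ with $|s_{n+k}|>AF^k(x)$ exists because $|s_{n+k}|\leqslant M+(n+k)p$ grows linearly while $F^k(x)$ is an iterated exponential --- so $\Gamma_s=\gamma_s$ and there is nothing to prove; (ii) for general $s\in\Sigma_M^p$ your sketched idea is the right one and can be made quantitative: $a_n$ satisfies $a_{n+1}\geqslant\E(a_n)-1$ with $a_0=\zeta$, so it grows like an iterated exponential of $\zeta$, whereas $\E^n(\psi(0))$ is the endpoint of $\gamma_{\sigma^n(s)}$, whose real part is controlled by the linearly growing entries of $\sigma^n(s)$ and therefore cannot satisfy $\Ree(\E^n(\psi(0)))\geqslant a_n-1$ for all $n$. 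Either route finishes the argument in a few lines; without one of them the last inclusion is asserted rather than proved, exactly as in the paper.
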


\begin{proof}
Directly from Proposition \ref{prop:E(V) covers V} we can see that the family $\{P_s^n\}_{n\in\Z_+}$ is a decreasing family of subsets of $V(\zeta, b_k, M_0)$. These sets are also closed as they are preimages under a continuous function of a closed set.
To show that 
$$\alpha_s^k\subseteq\displaystyle\bigcap_{n=1}^\infty P_s^n,$$
we need to prove that $\alpha_s^k\subseteq P_s^n$ for each $n\in\Z_+$. It is sufficient to show that $\E^n(\alpha_s^k) \subseteq V(a_n, b_{n+k}, M_n)$. We can decompose the curve $\alpha_s^k$ into pairwise disjoint sets $\beta_s^q = \alpha_s^q\setminus \alpha_s^{q-1}$, where $q\in\{1,2,\ldots, k\}$, and the set $\beta_s^0 = \alpha_s^0$. For each $q\in\{0,1,\ldots, k\}$ we have $\beta_s^q = \E^q(\alpha_{\hat{s}^q})$, where $\hat{s}^q$ is any element from the set $\sigma^{-q}(s)$ that belongs to $\Sigma_M^p$, e.g., $\hat{s}^q = 0_n s$. From Corollary \ref{cor:Alpha in V} we know that for every $n\in\Z_+$ the inclusion
$\E^{n+q}(\alpha_{\hat{s}^q}) \subseteq V(a_{n+q}, b_{n+q}, M_{n+q})$
holds.
Since the itinerary of elements in the set $\alpha_{\hat{s}^q}$ belongs to $\sigma^{-q}(s)$, the itinerary of elements in the set $\E^{n+q}(\alpha_{\hat{s}^q})$ is $\sigma^n(s)$. The itinerary $s$ is an element of $\Sigma_M^p$ so $\sigma^n(s)\in\Sigma_{M_n}^p$. Therefore, we can modify the above inclusion as
$\E^{n+q}(\alpha_{\hat{s}^q}) \subseteq V(a_{n+q}, b_{n+q}, M_n).$
Thus,
$$\E^n(\beta_s^q) =  \E^{n+q}(\alpha_{\hat{s}^q}) \subseteq V(a_{n+q}, b_{n+q}, M_n) \subseteq V(a_n, b_{n+k}, M_n).$$
Since the above inclusion holds for each $q\in\{0,1,\ldots, k\}$ we have the inclusion
$$\E^n(\alpha_s) \subseteq V(a_{n}, b_{n+k}, M_n)$$
for each $n\in\Z_+$. This completes the proof of the first inclusion.

Now, consider an arbitrary point $z$ belonging to the intersection of sets $P_s^n$. Such a point has the itinerary $s$ because the fact that $z\in P_s^n$ implies that the initial $n$ terms of the itinerary of $S(z)$ coincide with the initial $n$ terms of the itinerary $s$.
Such a point is an escaping point ($z\in\C$ such that $\E^n(z) \xrightarrow[]{n \to \infty} \infty$) because $\E^n(z)\in V(a_n,b_{n+k},M_n)$, and from Corollary \ref{cor:Limit of a_n} we have $a_n\to +\infty$ with $n\to + \infty$. Therefore, $z$ lies on $\gamma_s$.
\end{proof}

\section{Accumulating on itself -- tangled hairs}\label{sec:Curly hairs}

In this section we present a construction that allows us to create hairs that are, in some sense, tangled. For this reason, the accumulation set of these hairs turns out to be a topologically interesting set. The reasoning presented in this section follows that of \cite{DJ}. For the reader's convenience, we present it here. We present it with details.

\begin{definition}
Let $n\in\Z_+$ and $k,l\in\N$. We say that a curve passes twice through $V(a_n,b_{n+k}, M_l)$ if it connects the left and right sides of this rectangle at least twice. We say that a curve passes twice through the set
$$L_{\lambda, s_0} \circ L_{\lambda, s_1} \circ \ldots \circ L_{\lambda, s_{n-1}}\left(V(a_n, b_{n+k}, M_l)\right)$$
if its image under the function $\E^n$ passes twice through $V(a_n,b_{n+k}, M_l)$.
\end{definition}

Let $0_k s$ denote an itinerary that starts with a sequence of $k$ zeros, followed by an infinite sequence $s=(s_0s_1s_2\ldots)\in \Sigma_M^p$. Notice that the sequence $0_k s$ is also an element of the set $\Sigma_M^p$ thus for each $k\in\N$ there exists a tail $\omega_{0_k s,\zeta}$.

\begin{definition}
    Let $d > 0$. We say that a curve $\varphi\subseteq\mathbb{C}$ is $d$-distant from the real line if $\varphi\subseteq\{z\in\mathbb{C}: |\operatorname{Im}(z)| \leqslant d\}$.
\end{definition}

\begin{lem}\label{lem:Omega close to R}
    For each $\varepsilon>0$ there exists $K\in\Z_+$ such that for every $k\geqslant K$ and every itinerary $s=\in\Sigma_M^p$, the curve $\omega_{0_k s,\zeta}$ is $\varepsilon$-distant from the real line.
\end{lem}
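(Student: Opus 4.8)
The plan is to use the explicit parametrization of the tail from \eqref{eq:Hair parametrization} together with the functional equation \eqref{eq:Functional equation} to control the imaginary parts of points on $\omega_{0_k s,\zeta}$. The key observation is that prepending $k$ zeros to $s$ produces an itinerary $0_k s$ whose first $k$ symbols are all $0$, so the associated tail lives in the strip $R_0$ and, more importantly, it is the $k$-fold $\E$-preimage (under the branches $L_{\lambda,0}$) of a tail for $s$ itself. Concretely, $\omega_{0_k s,\zeta}$ is contained in $L_{\lambda,0}^{\circ k}(\omega_{s,\zeta})$ (up to the base parts cut off by the choice of $\theta$), and $\E^k$ maps $\omega_{0_k s,\zeta}$ onto $\gamma_s([F^k(\theta_{0_k s}),+\infty))\subseteq\omega_{s,\zeta}$.

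First I would recall that by Lemma \ref{lem:Mis} we have $|\Imm(\E^n(z))|\leqslant|(\E^n)'(z)|$ for all $n$, and that for $z\in\omega_{0_k s,\zeta}$ the iterate $\E^k(z)$ lies in the half-plane $\{\Ree\geqslant\zeta\}$ with $\Ree(\E^k(z))\geqslant a_k$, where $a_k\to+\infty$ by Corollary \ref{cor:Limit of a_n}. The idea is that to reach a point with large real part after $k$ forward iterations, a point starting with small real part must have an extremely small derivative of $\E^{-k}$ at the far end, equivalently the composition of $k$ logarithmic branches $L_{\lambda,0}$ contracts the imaginary direction by a factor that tends to $0$ as $k\to\infty$. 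Since $\E(z)=\lambda e^z$ has derivative $\E'(z)=\E(z)$, the chain rule gives $(\E^k)'(z)=\prod_{j=0}^{k-1}\E^j(z)$ wait — rather $(\E^k)'(z)=\prod_{j=0}^{k-1}\E(\E^j(z))=\prod_{j=1}^{k}\E^j(z)$, so $|(\E^k)'(z)|=\prod_{j=1}^{k}|\E^j(z)|=\prod_{j=1}^{k}\lambda e^{\Ree(\E^{j-1}(z))}$ is enormous when the intermediate real parts are large.

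The cleanest route is therefore to bound $|\Imm(z)|$ directly for $z\in\omega_{0_k s,\zeta}$ by using Lemma \ref{lem:Mis} applied to the inverse. Write $z=L_{\lambda,0}(w)$ where $w=\E(z)$; then $\Imm(z)=\Imm(L_0(w/\lambda))=\arg(w/\lambda)=\arg(w)$, and since $w$ lies to the right of the circle $\kappa(\lambda e^\zeta)$ with $|\Imm(w)|$ itself controlled inductively, one gets $|\Imm(z)|=|\arg w|\leqslant|\Imm(w)|/\Ree(w)\leqslant |\Imm(w)|/\zeta'$ for a large constant $\zeta'=\lambda e^\zeta$. Iterating this contraction $k$ times, and using that every point $\E^j(z)$ for $0\leqslant j< k$ lies in $R_0$ to the right of $\kappa(\lambda e^\zeta)$ so that its real part exceeds $\zeta$, I obtain
\begin{equation*}
|\Imm(z)|\leqslant \frac{|\Imm(\E^k(z))|}{(\zeta')^{k}}\leqslant \frac{(2\cdot 1+1)\pi}{(\lambda e^\zeta)^{k}},
\end{equation*}
where the numerator is bounded because $\E^k(z)\in\omega_{s,\zeta}$ has imaginary part within the strip $R_{s_0}$ plus the remainder bound from \eqref{eq:Remainder}. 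Given $\varepsilon>0$, choosing $K$ so that $(2\pi+\text{const})/(\lambda e^\zeta)^{K}<\varepsilon$ then forces $|\Imm(z)|<\varepsilon$ for all $k\geqslant K$, uniformly in $s\in\Sigma_M^p$, which is exactly the desired $\varepsilon$-distance from the real line.

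The main obstacle I anticipate is making the contraction estimate $|\arg w|\leqslant|\Imm w|/\Ree w$ genuinely uniform in the itinerary $s$ and honest at every one of the $k$ intermediate steps: one must verify that each iterate $\E^j(z)$ for $0\leqslant j<k$ really does have real part bounded below by $\zeta$ (not merely the endpoints), which uses Proposition \ref{prop:Re increases} together with the choice of $\zeta$ guaranteeing that $\E$ sends $\{\Ree=\zeta\}$ to a circle crossing the strip boundaries to the right of $\zeta$. A subtlety is that the imaginary parts of the intermediate points could in principle be as large as $(2M_0+1)\pi$ for the base portion, so one should either restrict attention to the tail (where the explicit parametrization \eqref{eq:Hair parametrization}–\eqref{eq:Remainder} keeps $|\Imm|$ near $2\pi|s_0|=0$) or absorb the finitely many base points into the constant; either way the geometric factor $(\lambda e^\zeta)^{-k}$ dominates and delivers uniform smallness. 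Once this uniform per-step contraction is established, the conclusion is immediate.
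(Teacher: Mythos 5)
Your argument is essentially the paper's own proof: both rest on the identity $\Imm(\E^j(z))=\operatorname{Arg}(\E^{j+1}(z))$ for iterates lying in $R_0$, iterated through the $k$ zero symbols to contract the imaginary part by a uniform factor per step (the paper bounds each step by $\arctan(r/\zeta)\leqslant r/\zeta$, while you use $|\operatorname{Arg} w|\leqslant|\Imm w|/\Ree w$ with $\Ree w\gtrsim\lambda e^{\zeta}$ — a sharper constant but the same mechanism), with the final numerator bounded uniformly in $s\in\Sigma_M^p$ by $(2M+1)\pi$ rather than your $3\pi$. The proof is correct and the uniformity in $s$ is handled the same way as in the paper, so no further comment is needed.
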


\begin{proof}
    Let $z\in\C$ be a number such that $\Ree(z)\geqslant\zeta$, $z\in R_0$, $\E(z)\in R_0$, and $\Ree(\E(z)) > \Ree(z)$. Let us assume that $|\Imm(\E(z))| = r \leqslant \pi$. The point $\E(z)$ lies on a circle with radius $\lambda e^{\Ree(z)}$. For $\zeta$ large enough this circle intersects half-lines defined by $|\Imm(w)|= r$ and $\Ree(w)\geqslant 0$, at points with a real part greater than $\Ree(z)$, thus greater than $\zeta$. We obtain a constraint on the argument of $\E(z)$: $|\textrm{Arg}(\E(z))| \leqslant \textrm{arctan}\left(\frac{r}{\zeta}\right)$. In this way we obtain a constraint on the imaginary part of $z$: $|\Imm(z)| \leqslant \textrm{arctan}\left(\frac{r}{\zeta}\right)$. Therefore, if we assume that for $z$ we additionally have $\E^m(z)\in R_0$ for each $m\leqslant k-1$ we obtain a constraint
    \begin{equation}\label{eq:Omega close to R}
        |\Imm(z)| \leqslant f^{k-1}(\pi),
    \end{equation}
    where $f(r)=\textrm{arctan}\left(\frac{r}{\zeta}\right)$. As $k\to\infty$ the right-hand side of the inequality converges to $0$, because $\textrm{arctan}(x) < x$ for $x>0$, and $\textrm{arctan}(0) = 0$. Thus, for any $\varepsilon>0$ we can choose $K$ large enough such that for every $k\geqslant K$ we have $|\Imm(z)|\leqslant \varepsilon$. Now it suffices to notice that if $v\in\omega_{0_k s,\zeta}$ then $v$ satisfies the assumptions made in the analysis above. Therefore, for any $\varepsilon>0$ there exists $K$ large enough such that for every $k\geqslant K$ the entire curve $\omega_{0_k s,\zeta}$ is $\varepsilon$-distant from the real line. We complete the proof by observing that the \eqref{eq:Omega close to R} does not depend on $s\in\Sigma_M^p$.
\end{proof}

\begin{lem}\label{lem:L^n(Omega)}
   Let $z\in\C$ satisfy $\Ree (z) > 0$, $|\Imm(z)| < \varepsilon$, where $\varepsilon > 0$. If $\varepsilon$ is small enough, then there exists $n\in\N$ such that $\Ree(L_{\lambda,0}^n(z)) \leqslant 0$. There exists also the smallest $m\in\N$ such that $L_{\lambda,0}^m(z)\in\overline{B(0,1)}$. For every $k\leqslant m$ we have $|\Imm(L_{\lambda,0}^k(z))| \leqslant \varepsilon$. Moreover, there exists $m'\leqslant n$ such that $L_{\lambda,0}^{m'}(z)\in\overline{B(0,\frac{1}{e})}$.
\end{lem}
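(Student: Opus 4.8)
The plan is to work in Cartesian coordinates and track the real and imaginary parts of the backward orbit $w_j := L_{\lambda,0}^j(z)$. Writing $w=x+iy$ (off the non-positive real axis), the explicit formula $L_{\lambda,0}(w)=\log(|w|/\lambda)+i\arg(w)$, with $\arg$ taking values in $(-\pi,\pi]$, gives the recursion
$$\Ree(w_{j+1}) = \log\frac{|w_j|}{\lambda}, \qquad \Imm(w_{j+1}) = \arg(w_j).$$
First I would prove the existence of $n$ with $\Ree(w_n)\le 0$. While $w_j$ stays near the positive real axis we have $|w_j|\approx\Ree(w_j)$, so the real parts approximately obey the one-dimensional map $g(x)=\log(x/\lambda)$. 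Because $\lambda>\frac1e$, the equation $\lambda e^x=x$ has no real solution and in fact $\lambda e^x>x$ for every $x\in\R$; taking logarithms this is exactly $g(x)<x$ for all $x>0$. Hence, as long as the orbit hugs the real axis, $(\Ree(w_j))_j$ is strictly decreasing; since $g$ has no positive fixed point and $g(x)\to-\infty$ as $x\to0^+$, the sequence must eventually become $\le 0$. I would make this rigorous by using the imaginary-part bound below to control the error between $|w_j|$ and $\Ree(w_j)$.

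For the imaginary parts and the index $m$, the key estimate is $|\Imm(w_{j+1})|=|\arg(w_j)|\le |\Imm(w_j)|/\Ree(w_j)$ whenever $\Ree(w_j)>0$. While the orbit is outside the unit disc, the inductive hypothesis $|\Imm(w_j)|\le\varepsilon$ together with $|w_j|>1$ forces $\Ree(w_j)\ge\sqrt{1-\varepsilon^2}$; far out the real part is in fact much larger than $1$, so the imaginary parts contract strongly and retain enough slack to keep $|\Imm(w_j)|\le\varepsilon$ up to the first entry $m$ into $\overline{B(0,1)}$. I would identify $m$ as the first time the modulus falls to $\le 1$, which occurs once the decreasing real part descends to a neighbourhood of $\lambda$.

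Finally, for the entry into $\overline{B(0,\frac1e)}$ I would examine the single step at which the real part changes sign. At that step $|w_j|$ is close to $\lambda$, so $\Ree(w_{j+1})=\log(|w_j|/\lambda)$ is close to $0$; since $\arg(w_j)$ is at most $\varepsilon$, the modulus $|w_{j+1}|$ is then dominated by the small imaginary contribution and can be made $\le\frac1e$ for $\varepsilon$ small, giving $m'\le n$. The main obstacle is precisely the control of this transition step: I must guarantee that the orbit passes through the critical modulus $\lambda$ \emph{gently}, landing inside the small discs while still hugging the real axis, rather than overshooting into the left half-plane, where $\arg$ jumps toward $\pm\pi$ and the near-axis structure collapses. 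This is where the smallness of $\varepsilon$ and the precise position of the real part relative to $\lambda$ must be exploited, and I expect it to be the technical heart of the argument.
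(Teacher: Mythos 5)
Your treatment of the first three assertions follows essentially the paper's own route. The existence of $n$ comes from the one-dimensional model $x\mapsto\ln(x/\lambda)$ having no positive fixed point when $\lambda>\frac{1}{e}$ (your ``$\lambda e^x>x$ for all $x$'' is exactly the paper's ``$\ln r+1\leqslant r$ and $\ln\lambda>-1$''), with the $\varepsilon$-error absorbed into the model map $f(r)=\ln\bigl(\sqrt{r^2+\varepsilon^2}\bigr)-\ln\lambda$, precisely as you propose. Your bound $|\Imm(w_{j+1})|=|\arg(w_j)|\leqslant|\Imm(w_j)|/\Ree(w_j)$ plays the role of the paper's derivative estimate $|L_{\lambda,0}'(w)|=1/|w|\leqslant 1$ outside $\overline{B(0,1)}$. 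One caveat: as written your induction does not close, since $\Ree(w_j)\geqslant\sqrt{1-\varepsilon^2}$ only yields the expansive factor $1/\sqrt{1-\varepsilon^2}>1$; the ``slack'' accumulated while $\Ree(w_j)$ is large must actually be quantified (e.g.\ by treating separately the boundedly many steps with $\Ree(w_j)<1$). This is a fixable point, and the paper's own version of the step is comparably terse.

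The genuine gap is in the final claim. Your mechanism --- at the sign-change step $|w_j|$ is ``close to $\lambda$'', hence $\Ree(w_{j+1})=\ln(|w_j|/\lambda)$ is close to $0$ --- is unjustified: one only knows $|w_{n-1}|\leqslant\lambda$, not $|w_{n-1}|\approx\lambda$, and the overshoot you yourself flag as the technical heart genuinely occurs. For $\lambda=1$ take a real orbit passing through $e^{e^{0.4}}\mapsto e^{0.4}\mapsto 0.4\mapsto\ln 0.4\approx-0.92$: every modulus along it exceeds $\frac{1}{e}\approx 0.368$, so no single-step argument at the sign change (nor any choice of $m'\leqslant n$) produces a point of $\overline{B(0,\frac{1}{e})}$. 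Thus your proposed proof of the fourth assertion cannot be completed as described. The paper argues differently: it picks $m'$ with $\Ree(L_{\lambda,0}^{m'}(z))<\frac{1}{e}$ and propagates the imaginary bound through the extra $m'-m$ steps using $|L_{\lambda,0}'|\leqslant e$ on $\{|w|\geqslant\frac{1}{e}\}$, getting $|\Imm(L_{\lambda,0}^{m'}(z))|\leqslant\varepsilon e^{m'-m}$ --- although it, too, leaves the required lower bound $\Ree(L_{\lambda,0}^{m'}(z))>-\frac{1}{e}$ unaddressed, and the example above shows this is a real issue for isolated points. In the paper's application (Proposition \ref{prop:Hair passes twice 1}) the lemma is applied to a connected curve whose real parts sweep a whole half-line, where an intermediate-value argument on the modulus rescues the conclusion; if you want a pointwise statement you must either adopt the paper's bounded-step derivative argument with the missing lower bound supplied, or reformulate the claim for curves.
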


\begin{proof}
    Let $w$ be a point satisfying $\Ree(w) > 0$, $|w| > 1$. Since $|L_{\lambda, 0}'(w)| = \frac{1}{|w|}$ then $|L_{\lambda, 0}'(w)| \leqslant 1$. Thus, for such $w$ the distance from $L_{\lambda, 0}(w)$ to the projection of $L_{\lambda, 0}(w)$ onto the real line is no greater than the distance from $w$ to its projection onto the real line. Then if $|\Imm(w)| < \varepsilon$ for some $\varepsilon > 0 $ then $|\Imm(L_{\lambda,0}(w))| < \varepsilon$. Therefore, if there exist $m\in\N$ such that $|L_{\lambda, 0}^m(z)| \leqslant 1$ and for every $k\leqslant m$ we have $|L_{\lambda, 0}^k(z))| > 1$, then $|\Imm(L_{\lambda, 0}^k(z))| < \varepsilon$ for every $k\leqslant m$. Therefore, to find $m$ described in the thesis of this lemma, it is enough to prove that there exists $n\in\N$ such that $\Ree(L_{\lambda,0}^n(z)) \leqslant 0$. Now, let $v$ be a point satisfying $\Ree(v) > 1$ and $|\Imm(v)| < \varepsilon$. Let $v = x + iy$. Then
    $$ L_{\lambda, 0} (v) = \ln\left(\frac{1}{\lambda}\sqrt{x^2 + y^2}\right) + i \textrm{Arg}\left(\frac{v}{\lambda}\right).$$
    Since $|y| < \varepsilon$, we have
    $$\operatorname{Re}\left(v\right) < \ln\left(\frac{1}{\lambda}\sqrt{\zeta^2 + \varepsilon^2}\right) = \ln\left(\sqrt{\zeta^2 + \varepsilon^2}\right) - \ln(\lambda). $$
    Notice that for sufficiently small $\varepsilon$ the inequality
    $
    \ln\left(\sqrt{r^2 + \varepsilon^2}\right) - \ln(\lambda) < r
    $
    holds for every $r > 0$. This is because for every $r>0$ the inequality $\ln(r) + 1 \leqslant r$ holds and $\ln(\lambda) > -1$. Therefore, if $f(r) = \ln\left(\sqrt{r^2 + \varepsilon^2}\right) - \ln(\lambda)$ then for every $r > 0$ there exists $n\in\N$ such that $f^n(r) \leqslant 0$. Thus, we obtain $n\in\N$ such that $\Ree(L_{\lambda, 0}^n(z)) \leqslant 0$. We also obtain $m'\in\N$ such that $\Ree(L_{\lambda, 0}^{m'}(z)) < \frac{1}{e}$. Let $\eta = \varepsilon e^{m'-m}$. The point $L_{\lambda, 0}^{m'}(z)$ is $\eta$-distant from real line, because for every $z$ such that $|z|\geqslant \frac{1}{e}$ we have $|L_{\lambda, 0}(z)| \leqslant e$. Thus, for $\varepsilon$ small enough we have $|L_{\lambda, 0}^{m'}(z)| \leqslant \frac{1}{e}$.
\end{proof}

\begin{prop}\label{prop:Hair passes twice 1}
Let $n\in\Z_+$ and $j,l \in \mathbb{N}$. There exists an integer $K>0$ such that for every $k$ satisfying $k\geqslant K$ and for every $s\in\Sigma_M^p$, the hair corresponding to the itinerary $0_k s$ passes twice through the rectangle $V(a_n, b_{n+j}, M_l)$.
\end{prop}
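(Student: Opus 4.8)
The plan is to exhibit two disjoint sub-arcs of the hair $\gamma_{0_k s}$, each connecting the left side $\{\Ree(z)=a_n-1\}$ to the right side $\{\Ree(z)=b_{n+j}+1\}$ of $V(a_n,b_{n+j},M_l)$ while staying in the band $|\Imm(z)|\leqslant (2M_l+1)\pi$. A preliminary observation makes both arcs easier to place: since $n\in\Z_+$, the sets $\E^n(\alpha_s)$ lie far to the right (their points have modulus at least $\lambda e^{\zeta}$, and $\E(\alpha_s)\subseteq\omega_{\sigma(s)}\subseteq\{\Ree\geqslant\zeta\}$), so from the definition of $a_n$ one gets $a_n-1>\zeta$ once $\zeta$ is large. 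Thus the whole target rectangle sits strictly to the right of the line $\Ree=\zeta$, which is exactly where the tail of the hair begins.

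First I would produce the first passage from the tail. Fix $\varepsilon<\pi$ and invoke Lemma \ref{lem:Omega close to R} to find $K_1$ so that for $k\geqslant K_1$ and every $s\in\Sigma_M^p$ the tail $\omega_{0_k s}$ is $\varepsilon$-distant from the real line, hence stays inside $|\Imm|\leqslant\pi\leqslant(2M_l+1)\pi$. Since $\omega_{0_k s}=\gamma_{0_k s}([\theta_{0_k s},\infty))$ runs from the point of real part $\zeta$ out to $\infty$ and $\zeta<a_n-1<b_{n+j}+1$, continuity of $\Ree$ along this sub-arc forces it to meet both vertical sides of the rectangle. This gives one crossing, entirely from the near-axis tail.

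Second — and this is the delicate part — I would trace the hair inward past the base of the tail. Lemma \ref{lem:L^n(Omega)} shows that, for $\zeta$ large, the backward $L_{\lambda,0}$-orbit of a tail point stays within $\varepsilon$ of the real axis with strictly decreasing real part, so the inward continuation of $\gamma_{0_k s}$ descends along the axis into the neighbourhood $\overline{B(0,\tfrac1e)}$ of the singular value. There the curve must fold: a point near $0$ with small negative real part has argument close to $\pi$, so $L_{\lambda,0}$ sends it to very negative real part near $\Imm=\pi$, and the next pullback returns it to positive real part near the top of $R_0$. This fold produces a second sub-arc, disjoint from the tail (it occupies a strictly smaller parameter range), that turns back toward the right half-plane and provides the candidate for the second crossing.

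The hard part will be the quantitative control of this folded arc: I must guarantee that after folding it actually reaches real part at least $b_{n+j}+1$, i.e. that it sweeps across the entire width of the far-away target rather than merely re-entering the right half-plane at a moderate (logarithmically small) real part. This is precisely where $k$ must be taken large: more leading zeros force the inward descent closer to the real axis and to $0$ (Lemmas \ref{lem:Omega close to R} and \ref{lem:L^n(Omega)} together with the bound \eqref{eq:Omega close to R}), which throws the fold farther to the right. I expect to make this rigorous by propagating the spanning property of the rectangles through the covering Proposition \ref{prop:E(V) covers V} and by applying Koebe distortion bounds (Theorem \ref{thm:Koebe}) to the univalent branches $L_{\lambda,0}$ in order to control the shape and horizontal extent of the folded arc. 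Uniformity in $s\in\Sigma_M^p$ is automatic, since every constant furnished by the lemmas above is independent of $s$; taking $K$ to be the maximum of $K_1$ and the threshold from the fold estimate then yields the proposition.
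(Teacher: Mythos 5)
Your geometric picture coincides with the paper's: pull the near-axis tail back by $L_{\lambda,0}$ until it descends past the singular value, and use the resulting fold near $\Imm(z)=\pm\pi$ for a second crossing (the paper extracts \emph{both} crossings from the two legs of this hairpin, while you take the tail itself for the first one; that difference is cosmetic, and your first crossing is correct since $a_n-1>\zeta$ for $n\geqslant 1$). The genuine gap is exactly the step you flag as ``the hard part'', and the tools you propose for it would not close it. Proposition \ref{prop:E(V) covers V} concerns forward images of the target rectangles under $\E$ and says nothing about where the backward $L_{\lambda,0}$-orbit of the tail lands near the singular value; Koebe distortion (Theorem \ref{thm:Koebe}) controls the shape of univalent images, not their location, and near $0$ the branch $L_{\lambda,0}$ admits no uniform distortion control anyway. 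Moreover, as you describe the fold, the returning arc only reaches real part $\ln(|w|/\lambda)$, where $w$ is the far-left point produced one pullback earlier -- i.e.\ only logarithmically far to the right of wherever that previous step reached -- so ``re-entering the right half-plane at a moderate real part'' is precisely what happens unless the curve is first thrown \emph{extremely} far to the left.

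What actually closes the gap is an elementary two-logarithm bootstrap, with no covering or distortion argument. After enough pullbacks the curve crosses the line $\Ree(z)=0$, and at that crossing its modulus equals its distance to the real axis, which is at most $\delta$ with $\delta\to 0$ as $k\to\infty$ (Lemmas \ref{lem:Omega close to R} and \ref{lem:L^n(Omega)}). One further application of $L_{\lambda,0}$ therefore yields a sub-arc $\nu_1$ descending to real part $\tau\approx\ln(\delta/\lambda)$, arbitrarily negative, which still runs out to $+\infty$ on the right and stays close to the real axis provided $\delta$ is small compared with $\lambda e^{\tau}$. The final pullback $\nu_2=L_{\lambda,0}(\nu_1)$ then has one end at real part about $\ln(|\tau|/\lambda)$ with $\Imm(\nu_2)$ near $\pm\pi$; choosing $|\tau|>\lambda e^{b_{n+j}+1}$, i.e.\ $\delta$ smaller than roughly $\lambda e^{-\lambda e^{b_{n+j}+1}}$, makes this exceed $b_{n+j}+1$, while both legs of the hairpin lie in $|\Imm(z)|\leqslant\pi\leqslant(2M_l+1)\pi$. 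Everything is read off from $\Ree(L_{\lambda,0}(w))=\ln(|w|/\lambda)$ and $\Imm(L_{\lambda,0}(w))=\arg(w/\lambda)$, and uniformity in $s\in\Sigma_M^p$ comes from the uniformity of $\delta$ in Lemma \ref{lem:Omega close to R}; this is the estimate your write-up still needs.
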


\begin{proof}
   Let $n\in\Z_+$ and $j,l\in\N$. Let $\varepsilon > 0$. According to Lemma \ref{lem:Omega close to R} there exists $k\in \mathbb{Z}_+$ such that the curve $\mu = \omega_{0_k s,\zeta}$ is $\varepsilon$-distant from the real line. Let $w$ be the point on $\mu$ satisfying $\Ree(w)=\zeta$. Referring to Lemma \ref{lem:L^n(Omega)}, let $Q\in\N$ the smallest number such that $L_{\lambda,0}^{Q}(\mu) \cap \overline{B(0,1)} = \varnothing$. According to \ref{lem:L^n(Omega)} curve $L_{\lambda,0}^Q(\mu)$ is $\varepsilon$-distant from the real line. Again referring to Lemma \ref{lem:L^n(Omega)}, let $P\in\N$ be the smallest number such that $L_{\lambda,0}^{Q+P}(\mu) \cap \overline{B(0,\frac{1}{e})} = \varnothing$. The number $P$ is finite and depends only on $\lambda$ and $\varepsilon$. 
   Now, we create the curve $L_{\lambda,0}^{Q+P+1}(\mu)$. Since $\ln\left(\frac{1}{e}\right) = -1$ and $\ln(\lambda) > -1$ then, for sufficiently small $\varepsilon$, the curve $L_{\lambda,0}^{Q+P+1}(\mu)$ intersects the line $\{z\in\C: \Ree(z) = 0\}$. Let $\nu_0$ be a continuous subcurve of the curve $L_{\lambda,0}^{Q+P+1}(\mu)$ whose endpoint has a real part equal to $0$. For every $\delta >0$ we can choose $k>0$ such that the curve $\nu_0$ is $\delta$-distant from the real line. Moreover, the number $k$ does not depend on $s\in\Sigma_M^p$.
    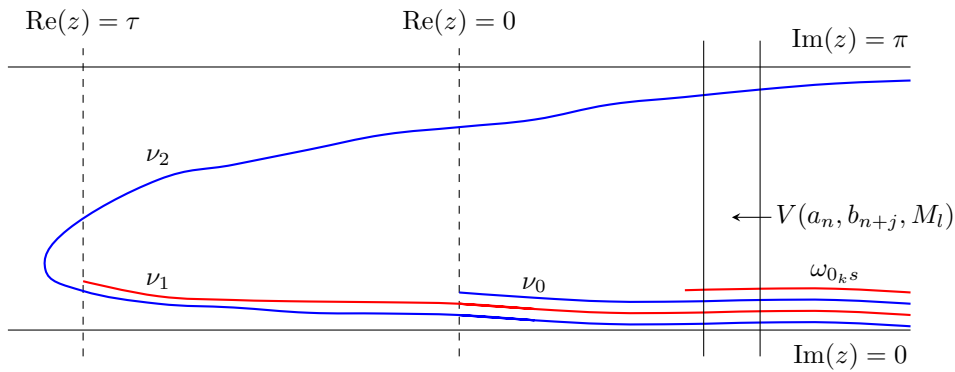
\begin{figure}[h]
        \centering
        \begin{tikzpicture}[font=\small]
    
        \draw (-2,2) -- (10,2);
        \draw (-2,-1.5) -- (10,-1.5);
        
        \draw node at (9.2,2.35) {$\Imm(z)=\pi$};    
        \draw node at (9.2,-1.85){$\Imm(z)=0$};

        \draw [dashed] (4,-1.85) -- (4,2.25);
        \draw [dashed] (-1,-1.85) -- (-1,2.25);

        \draw node at (4,2.6) {$\Ree(z)=0$};
        \draw node at (-1,2.6) {$\Ree(z)=\tau$};

        \draw [thick, blue] plot [smooth, tension=0.7] coordinates {(10,-1.45) (9,-1.4) (8,-1.4) (7,-1.42) (6,-1.42) (5,-1.37) (4,-1.30) (5,-1.37) (4,-1.30) (3,-1.28) (2,-1.27) (1,-1.20) (0,-1.15) (-1,-0.98) (-1.5,-0.7) (-1.2,-0.16) (0,0.5) (1,0.7) (2,0.9) (3, 1.1) (4,1.2) (5,1.3) (6,1.5) (7,1.6) (8,1.7) (9,1.78) (10,1.82)};

        \draw [thick, red] plot [smooth, tension=0.7] coordinates {(10,-1.30) (9,-1.25) (8,-1.25) (7,-1.27) (6,-1.27) (5,-1.22) (4,-1.15) (5,-1.22) (4,-1.15) (3,-1.13) (2,-1.12) (1,-1.10) (0,-1.05) (-1,-0.85)};

        \draw [thick, blue] plot [smooth, tension=0.7] coordinates {(10,-1.15) (9,-1.1) (8,-1.1) (7,-1.12) (6,-1.12) (5,-1.07) (4,-1)};
        
        \draw [thick, red] plot [smooth, tension=0.7] coordinates {(10,-1) (9,-0.95) (8,-0.95) (7,-0.97)};

        \draw node at (9,-0.75) {$\omega_{0_ks}$};
        \draw node at (5,-0.9) {$\nu_{0}$};
        \draw node at (0,-0.85) {$\nu_{1}$};
        \draw node at (0,0.8) {$\nu_{2}$};

        \draw (7.25,-1.85) -- (7.25,2.35);
        \draw (8,-1.85) -- (8,2.35);

        \draw node at (9.4,0) {\small $V(a_n,b_{n+j},M_l)$};
        \draw [-stealth](8.15,0) -- (7.625,0);
        
    \end{tikzpicture}
        \caption{The curve $\nu_2$ passes twice through $V(a_n,b_{n+j},M_l)$.}
        \label{fig:L^n(Omega)}
    \end{figure}
    Let us apply the transformation $L_{\lambda,0}$ again, this time to the curve $\nu_0$. Let $\tau < 0$. By choosing $\nu_0$ sufficiently close to the real line, we can obtain a continuous subcurve $\nu_1$ of the curve $L_{\lambda,0}(\nu_0)$ which endpoint has a real part equal to $\tau$, and it is arbitrarily close to the real line. At this point we apply the function $L_{\lambda,0}$ for the last time within the conducted construction. Notice that by choosing $\tau<0$ with a sufficiently large absolute value we obtain the curve $\nu_2=L_{\lambda,0}(\nu_1)$ which have a part with a sufficiently large real part and an imaginary part close to $\pi$ or $-\pi$. This is because points whose image under the function $\E$ lies far to the left and near the real axis must lie far enough to the right and close enough to the horizontal line determined by an odd multiplicity of $\pi$.    
    Therefore, for $\tau$ with sufficiently large absolute value the curve $\nu_2$ passes twice through the rectangle $V(a_n,b_{n+j}, M_l)$. In the end, it is enough to note that if, starting from the curve $\omega_{0_K s,\zeta}$, we are able to complete the above construction then, for each $k\geqslant K$, starting from the curve $\omega_{0_k s,\zeta}$ we are also able to complete the above construction.
\end{proof}

Based on the previous proposition, we can formulate another one as follows.

\begin{prop}\label{prop:Hair passes twice 2}
Let $n\in\Z_+$ and $j,l \in \mathbb{N}$ and let the numbers $s_0, s_1, \ldots, s_{n-1}$ be such that for every $q\in\{0,1,\ldots n-1\}$, $|s_q|\leqslant M_q$. There exists an integer $K>0$ such that for every $k$ satisfying $k\geqslant K$ and for every $r\in\Sigma_M^p$ the hair $\gamma_{s0_kr}$, where $s0_kr$ denotes the itinerary $s_0s_1\ldots s_{n-1}0_k r$, passes twice through the set
$$L_{\lambda, s_0} \circ L_{\lambda, s_1} \circ \ldots \circ L_{\lambda, s_{n-1}}\left(V(a_n, b_{n+j}, M_l)\right).$$
\end{prop}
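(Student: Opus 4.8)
The plan is to reduce the statement directly to Proposition \ref{prop:Hair passes twice 1} by pushing the hair $\gamma_{s0_kr}$ forward under $\E^n$ and using the functional equation \eqref{eq:Functional equation}. Before anything else I would record why the hair is well defined: the hypotheses $|s_q|\leqslant M_q$ for $q\in\{0,\ldots,n-1\}$, together with $r\in\Sigma_M^p$, guarantee that the concatenation $s0_kr=s_0s_1\ldots s_{n-1}0_kr$ again belongs to $\Sigma_M^p$. Indeed, its first $n$ entries satisfy the required bound by assumption, the next $k$ entries are zero, and the entry at position $n+k+i$ equals $r_i$ with $|r_i|\leqslant M_i\leqslant M_{n+k+i}$ because the sequence $(M_m)$ is increasing. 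Hence $\gamma_{s0_kr}$ exists and all the estimates of Sections 3 and 4 apply to it, in particular the uniform threshold $\eta^*=\eta_s$ valid over all shifted families.

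Next I would unwind the definition of passing twice. By definition, the hair $\gamma_{s0_kr}$ passes twice through $L_{\lambda, s_0}\circ\cdots\circ L_{\lambda, s_{n-1}}\left(V(a_n,b_{n+j},M_l)\right)$ exactly when its image $\E^n(\gamma_{s0_kr})$ passes twice through the rectangle $V(a_n,b_{n+j},M_l)$. Iterating the functional equation \eqref{eq:Functional equation} $n$ times and using that $\eta^*$ can be chosen uniformly, one gets, for every $\eta\geqslant\eta^*$,
$$\E^n\big(\gamma_{s0_kr}(\eta)\big)=\gamma_{\sigma^n(s0_kr)}\big(F^n(\eta)\big)=\gamma_{0_kr}\big(F^n(\eta)\big),$$
since $\sigma^n(s0_kr)=0_kr$. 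Consequently $\E^n(\gamma_{s0_kr})$ contains the tail $\gamma_{0_kr}\big([F^n(\eta^*),+\infty)\big)$ of the shifted hair.

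I would then argue that the portion of $\gamma_{0_kr}$ meeting $V(a_n,b_{n+j},M_l)$ lies inside this tail. Every point of $\gamma_{0_kr}$ inside the rectangle has real part at least $a_n-1$, and by the parametrization \eqref{eq:Hair parametrization}--\eqref{eq:Remainder} such a point is $\gamma_{0_kr}(\eta')$ with $\eta'$ comparable to its real part; combined with the fact that $a_n$ is, up to bounded error, at least $F^n(\eta^*)$ (which follows from the definition of $a_n$ and $\theta_s\geqslant\eta^*$), this forces $\eta'\geqslant F^n(\eta^*)$ once $\zeta$ is taken large enough. Hence the two crossing arcs of $\gamma_{0_kr}$ through $V(a_n,b_{n+j},M_l)$ are sub-arcs of $\E^n(\gamma_{s0_kr})$. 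Finally, Proposition \ref{prop:Hair passes twice 1}, applied to the itinerary $0_kr$ with the same parameters $n,j,l$, supplies an integer $K>0$, independent of $r\in\Sigma_M^p$, such that for every $k\geqslant K$ the hair $\gamma_{0_kr}$ connects the left and right sides of $V(a_n,b_{n+j},M_l)$ at least twice; since those arcs belong to $\E^n(\gamma_{s0_kr})$, the image $\E^n(\gamma_{s0_kr})$ also passes twice through $V(a_n,b_{n+j},M_l)$, which is what we need. One may take the very same $K$ produced by Proposition \ref{prop:Hair passes twice 1}.

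I expect the only delicate point to be the range bookkeeping in the third step, namely verifying that the part of $\gamma_{0_kr}$ that actually crosses $V(a_n,b_{n+j},M_l)$ is reached by $\E^n$ on $\gamma_{s0_kr}$, i.e. that the relevant parameters exceed the threshold $F^n(\eta^*)$ beyond which \eqref{eq:Functional equation} is valid. This is precisely where the uniform choice of $\eta^*$ over all families $\Sigma_{M_m}^p$ and the lower bound $a_n\gtrsim F^n(\eta^*)$ coming from the definition of $a_n$ are essential; everything else is a formal reduction to the previous proposition.
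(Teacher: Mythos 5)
Your overall strategy is the one the paper uses: reduce to Proposition \ref{prop:Hair passes twice 1} by pushing $\gamma_{s0_kr}$ forward under $\E^n$ and invoking the definition of ``passes twice'' for a pulled-back set. The preliminary check that $s0_kr\in\Sigma_M^p$ is fine. However, your third step contains a genuine error. You claim that every point of $\gamma_{0_kr}$ lying in $V(a_n,b_{n+j},M_l)$ has parameter $\eta'\geqslant F^n(\eta^*)$, i.e.\ lies in the tail portion of the hair, by arguing that the parameter is ``comparable to the real part'' via \eqref{eq:Hair parametrization}--\eqref{eq:Remainder}. But \eqref{eq:Hair parametrization} is only valid for $\eta\geqslant\eta_s$; for smaller parameters there is no relation between the parameter and the real part, and indeed a point of the hair at very small parameter can have arbitrarily large real part --- that is precisely the mechanism behind the tangled hairs. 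Concretely, in the proof of Proposition \ref{prop:Hair passes twice 1} the second crossing of $V(a_n,b_{n+j},M_l)$ is produced by the arc $\nu_2$, whose points lie far to the right near $\Imm(z)=\pm\pi$ but whose images under $\E$ lie far to the \emph{left} (near $\Ree(z)=\tau\ll 0$). Such points cannot belong to any tail $\omega_{0_kr,\zeta}$ (along which real parts increase under iteration, Proposition \ref{prop:Re increases}), so their parameters are well below $\eta^*$, let alone $F^n(\eta^*)$. Since the tail itself crosses the vertical strip $a_n-1\leqslant\Ree(z)\leqslant b_{n+j}+1$ essentially once (its real part is monotone in $\eta$ up to an error of size $|r_s|$), at least one of the two required crossings necessarily lies outside the range where your functional-equation argument applies. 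So the step you yourself flag as ``the only delicate point'' does in fact fail as written.

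The repair is to drop the restriction to the tail altogether: $\E^n$ maps the \emph{entire} hair $\gamma_{s0_kr}$ onto the \emph{entire} hair $\gamma_{0_kr}$, not merely onto $\gamma_{0_kr}([F^n(\eta^*),+\infty))$. This follows from the representation
$$\gamma_u = \bigcup_{m=1}^\infty L_{\lambda, u_0} \circ \ldots \circ L_{\lambda, u_{m-1}}\left(\omega_{\sigma^{m}(u),\zeta_{m}} \right)$$
given after Proposition \ref{prop:Re increases} (equivalently, from the fact that $\E$ maps the set of escaping points with itinerary $u$ onto those with itinerary $\sigma(u)$). With this surjectivity in hand, both crossing arcs of $\gamma_{0_kr}$ supplied by Proposition \ref{prop:Hair passes twice 1} are automatically contained in $\E^n(\gamma_{s0_kr})$, no range bookkeeping is needed, and the same constant $K$ works; this is exactly the (very short) argument the paper gives.
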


\begin{proof}
Let $K$ be such a number that, according to Proposition \ref{prop:Hair passes twice 1}, for every $k\geqslant K$, the hair $\gamma_{0_k r}$ passes twice through the set $V(a_n, b_{n+j}, M_l)$. We can obtain the set $\gamma_{0_k r}$ as the image of $\gamma_{s 0_k r}$ under the map $\E^n$. Hence, $\gamma_{s0_k r}$ passes twice through the set
$$L_{\lambda, s_0} \circ L_{\lambda, s_1} \circ \ldots \circ L_{\lambda, s_{n-1}}\left(V(a_n, b_{n+j}, M_l)\right).$$
\end{proof}

\section{Existence of indecomposable continua -- Proof of Theorem \ref{thm:A}}\label{sec:Continua}

In this section we prove Theorem \ref{thm:A}, therefore we show that for some itineraries $s$, the set $I_s$ is an indecomposable continuum in the Riemann sphere.

Let
$$\beta_s = \gamma_s \cup \{\infty\}.$$ 
The curve $\beta_s$ can be parametrized as $\varphi: [0,+\infty) \longrightarrow \RS$ such that $\varphi(0) = \infty$. The set $I_s$ is contained in the strip $R_k$ for some $k\in\Z$. So there exists a point $z_0(s)\in\C$ and an open set $U\subseteq\C$ such that $z_0(s)\in U$ and $U \cap I_s = \varnothing$. Consider the set $\RS\setminus\{z_0(s)\}$ which is homeomorphic to the complex plane $\C$. Therefore, we can represent $\beta_s$ as a subset of $\RS\setminus\{z_0(s)\}$ and use Curry's Theorem (Theorem \ref{thm:Curry}).
Thus, the proof of Theorem \ref{thm:A} is preceded by proofs that for $s\in\Sigma_M^p$:
\begin{itemize}
    \item $\overline{\beta_s}$ is a $1$-dimensional continuum in $\RS\setminus\{z_0(s)\}$,
    \item $\overline{\beta_s}$ does not separate $\RS\setminus\{z_0(s)\}$.
\end{itemize}
Then we prove that for $s = 0_{n_0}t_{0}0_{n_1}t_{1}0_{n_2}t_{2}\ldots$, if lengths of blocks of zeros are large enough then:
\begin{itemize}
    \item $\beta_s$ accumulates on itself,
    \item $I_s\cup\{\infty\} = \overline{\beta_s}$.
\end{itemize}

 \begin{lem}\label{lem:1dim continuum}
    Let $s\in\Sigma_M^p$. The set $\overline{\beta_s}$ is a $1$-dimensional continuum in $\RS\setminus\{z_0(s)\}$.
\end{lem}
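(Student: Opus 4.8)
The plan is to show three things: that $\overline{\beta_s}$ is compact and connected (hence a continuum) in $\RS\setminus\{z_0(s)\}$, and that it is one-dimensional. Recall $\beta_s = \gamma_s\cup\{\infty\}$, where $\gamma_s$ is the hair, an injective curve consisting of escaping points with $\lim_{\eta\to+\infty}\gamma_s(\eta)=\infty$. The point $z_0(s)$ is chosen so that a neighbourhood $U$ of it is disjoint from $I_s\supseteq\gamma_s$, so passing to $\RS\setminus\{z_0(s)\}\cong\C$ keeps $\overline{\beta_s}$ a closed subset of a space homeomorphic to the plane, which is where Curry's theorem will later be applied.

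\emph{Connectedness and compactness.} First I would argue that $\beta_s$ itself is connected: $\gamma_s$ is the continuous image of an interval, hence connected, and adjoining the single limit point $\infty$ (which lies in its closure in $\RS$, since $\gamma_s(\eta)\to\infty$) preserves connectedness. The closure of a connected set is connected, so $\overline{\beta_s}$ is connected. For compactness in $\RS\setminus\{z_0(s)\}$, the key point is that $\overline{\beta_s}$ is closed there \emph{and} bounded away from the deleted point; concretely, $\overline{\beta_s}\subseteq\overline{I_s}\cup\{\infty\}$, and since $I_s\subseteq R_{s_0}$ lies in a single horizontal strip and $U\cap I_s=\varnothing$, the closure stays a positive distance from $z_0(s)$. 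Thus $\overline{\beta_s}$ is a closed subset of the compact sphere avoiding $z_0(s)$, hence a compact subset of $\RS\setminus\{z_0(s)\}$. Together these give that $\overline{\beta_s}$ is a continuum.

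\emph{Dimension.} The one-dimensionality is where I expect the real work. Using Theorem~\ref{thm:Dimension 2} it suffices to control the topological dimension, and by Theorem~\ref{thm:Dimension} a planar set has dimension $2$ exactly when its interior is non-empty; so I must show $\overline{\beta_s}$ has \emph{empty interior} (it is clearly at least one-dimensional, containing an arc). The plan is to show $\overline{\beta_s}\subseteq I_s\cup\{\infty\}$ and that $I_s$ has empty interior. For the latter, one invokes that for $\lambda>\frac1e$ the Julia set is all of $\C$, so by Corollary~\ref{cor:Montel} any non-empty open set has forward images eventually covering $\C\setminus\{0\}$; a non-empty open subset of $I_s$ would force points of other itineraries (those whose orbits wander into other strips $R_k$) into $I_s$, a contradiction. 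Equivalently, since the $R_k$ partition the plane, a point interior to $I_s$ would have a whole disc mapping within the prescribed strips forever, contradicting the expansion/mixing provided by Montel. Adding the single point $\infty$ cannot create interior.

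\emph{Main obstacle.} The delicate step is verifying $\overline{\beta_s}\subseteq I_s\cup\{\infty\}$, i.e.\ that closure points in $\C$ still carry itinerary $s$. This is not automatic because the partition boundaries $\Imm(z)=(2j\pm1)\pi$ could in principle let a limit point fall into an adjacent strip. I would handle this by noting $\gamma_s\subseteq R_{s_0}$ and, more importantly, that the itinerary map is constant on $\gamma_s$ and the relevant sets $\Sigma_M^p$ are defined with a closed half-strip constraint; one must check limits respect the half-open strip convention or lie on $\{\infty\}$. The cleanest route is probably to show directly that $\overline{\gamma_s}\setminus\gamma_s$ consists only of non-escaping points still satisfying $S(z)=s$ (using the parametrization~\eqref{eq:Hair parametrization} and the nesting from Proposition~\ref{prop:Family P} to pin down itineraries of accumulation points) together with $\infty$, so that no interior can appear. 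Once emptiness of interior is established, Theorems~\ref{thm:Dimension 2} and~\ref{thm:Dimension} finish the dimension claim.
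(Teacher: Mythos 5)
Your overall route coincides with the paper's: $\overline{\beta_s}$ is a continuum because it is the closure of the connected curve $\beta_s$ and is a closed subset of the sphere avoiding $z_0(s)$; one-dimensionality is reduced via Theorems \ref{thm:Dimension 2} and \ref{thm:Dimension} to showing that $\overline{\gamma_s}$ has empty interior, which follows from Corollary \ref{cor:Montel} because the forward images of an open set $U\subseteq I_s$ stay in single strips forever and hence miss the lines $\Imm(w)=(2k+1)\pi$. That part of the proposal is sound and matches the paper.

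The genuine gap is precisely at the step you flag as delicate: establishing $\overline{\gamma_s}\subseteq I_s$, equivalently that $I_s$ is closed. Your empty-interior argument needs this containment (you pass from $U\subseteq\overline{\gamma_s}$ to $U\subseteq I_s$), but your proposed fix --- checking that ``limits respect the half-open strip convention'' via the parametrization \eqref{eq:Hair parametrization} and the nesting of Proposition \ref{prop:Family P} --- cannot deliver it: both tools only control the tail $\gamma_s([\theta_s,+\infty))$ and the rectangles $V(a_n,b_{n+k},M_n)$ surrounding it, whereas the accumulation of the hair occurs as $\eta\to 0$, far from the tail, and a priori nothing stops $\E^n$ of a limit point from landing on a boundary line $\Imm(w)=(2j+1)\pi$ of a strip. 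The paper's mechanism is dynamical and hinges on $s\in\Omega$: if $z\notin I_s$ and some iterate $\E^n(z)$ lies on a strip boundary, then all subsequent iterates are real, so the itinerary of $z$ ends in all zeros; choosing $q>n$ with $s_q\neq 0$ (possible because $s$ does not end in all zeros) gives $\E^q(z)\in\intt R_0$, hence a whole neighbourhood of $z$ whose $q$-th image lies in $\intt R_0$ and which is therefore disjoint from $I_s\subseteq\E^{-q}(R_{s_q})$. Without this (or an equivalent) argument the closedness of $I_s$ is unproved; note also that your claim that $\overline{\gamma_s}\setminus\gamma_s$ consists only of non-escaping points is not available at this stage and is not needed --- the only ingredient actually used is that $s$ does not terminate in an infinite block of zeros.
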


\begin{proof}
Firstly, we prove that for each $s\in\Sigma_M^p$, the set $I_s$ is closed. Let $s=(s_0s_1\ldots)$. Let $z$ be any point which is not an element of the set $I_s$. We want to find an open neighbourhood of the point $z$ that is disjoint from $I_s$. Let $S(z)=u=(u_0u_1\ldots)$. Since $z\notin I_s$, there exists $n\in\N$ such that $u_n\neq s_n$. The set $\E^n(I_s)$ is contained in $R_{s_n}$, and $\E^n(z)\in R_{u_n}$. If $\E^n(z)\in \intt R_{u_n}$ then there exists $\delta_1 > 0$ such that $\E^n(B(z,\delta_1))\subseteq \intt R_{u_n}$. Then $\E^n(B(z,\delta_1)) \cap \E^n(I_s) = \varnothing$, so $B(z,\delta_1) \cap I_s = \varnothing$. On the other hand, if $\E^n(z)\in\partial R_{u_n}$ then $z$ lies on the line determined by the equation $\Imm(w) = (2u_n+1)\pi$. Therefore, for every $m\in\N$ greater than $n$ we have $\Imm(\E^m(z)) = 0$. Since $s$ does not end in all zeros then there exists $q\in\N$ greater than $n$ such that $s_q\neq 0$. In that case, there exists $\delta_2>0$ such that $\E^q(B(z,\delta_2))\subseteq \intt R_{0}$. Since $s_q\neq 0$ we have $\E^q(I_s) \cap \E^q(B(z,\delta_2)) = \varnothing$. Therefore $B(z,\delta_2) \cap I_s = \varnothing$, which ends the proof that $I_s$ is closed. It follows that $\overline{\gamma_s}\subseteq I_s$.

The set $\overline{\beta_s}$ is non-empty. It is closed, so it is a compact subset of $\RS$ and $\RS\setminus\{z_0(s)\}$. It is also connected because it is the closure of a connected curve $\beta$. Therefore, $\overline{\beta_s} \subseteq \RS\setminus\{z_0(s)\}$ is a continuum.
We need to prove that the topological dimension of this set is $1$. To do this, it is enough to prove that $\overline{\gamma_s} \subseteq \C$ has a dimension equal to $1$. Referring to Theorem \ref{thm:Dimension 2}, since $\C$ is a metrizable and separable space it is enough to prove that the inductive dimension of $\overline{\gamma_s}$ is $1$. 
Therefore, according to Theorem \ref{thm:Dimension} it is sufficient to prove that the interior of $\overline{\gamma_s}\subseteq\C\simeq \R^2$ is empty. So assume that there exists a point $z$ and an open set $U$ such that $z\in U \subseteq \overline{\gamma_s}$. We have $\overline{\gamma_s}\subseteq I_s$, so $U\subseteq I_s$. Since the itinerary $s$ does not end in all zeros, the set $\bigcup_{n\in\N} \E^n(U)$ is disjoint from each line defined by $\Imm(w) = k\pi$, $k\in\Z$. However, the Julia set of the transformation $\E$ is the entire complex plane. Therefore, we obtain a contradiction with Montel's theorem (Corollary \ref{cor:Montel}). Thus, the set $\overline{\gamma_s}$ indeed has an empty interior, which completes the proof of this lemma.
\end{proof}

\begin{rem}
    In the proof of the lemma above, we only used the fact that $s$ does not end in all zeros and that $I_s$ is non-empty. Therefore, this lemma holds for itineraries $s$ from a larger set than $\Sigma_M^p$. 
\end{rem}

\begin{lem}\label{lem:Continuum 2}
    Let $s\in\Sigma_M^p$. The set $\overline{\beta_s}$ does not separate $\RS\setminus\{z_0(s)\}$.
\end{lem}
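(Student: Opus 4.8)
The plan is to show that the complement of $\overline{\beta_s}$ in $\RS\setminus\{z_0(s)\}$ is connected. Since we are working in a space homeomorphic to $\C$, and $\overline{\beta_s} = \overline{\gamma_s}\cup\{\infty\}$, I would first reduce the problem to analyzing the structure of $\overline{\gamma_s}$ in the plane together with how it behaves near infinity. The key structural fact at my disposal is Proposition \ref{prop:Family P}, which exhibits $\gamma_s$ (and hence $\overline{\gamma_s}\subseteq I_s$) as lying inside a nested intersection of preimages of rectangles $V(a_n,b_{n+k},M_n)$, each of which is a simply connected set that does not separate the plane. The image of $\gamma_s$ escapes to infinity with $\Ree(\E^n(z))\to+\infty$, so the hair is unbounded and tends to $\infty$, which is exactly what makes $\{\infty\}$ a natural point to adjoin.

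First I would argue that $\overline{\gamma_s}$ does not separate the plane $\C$. The natural route is to use the fact that $\overline{\gamma_s}$ is a one-dimensional continuum (Lemma \ref{lem:1dim continuum}) contained in a single strip $R_{s_0}$, together with the nested-rectangle description from Proposition \ref{prop:Family P}. Concretely, each set $P_s^n$ is the image under a univalent branch composition $L_{\lambda,s_0}\circ\cdots\circ L_{\lambda,s_{n-1}}$ of the rectangle $V(a_n,b_{n+k},M_n)$; since these branches are conformal homeomorphisms onto their images and rectangles are non-separating, each $P_s^n$ is a closed non-separating (in fact simply connected) subset of the plane. A descending intersection of such sets should itself be non-separating, and I would invoke the standard topological fact that a nested intersection of non-separating plane continua is non-separating, or equivalently that the complement remains connected in the limit. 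This handles the planar part.

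Next I would incorporate the point at infinity. Because $\gamma_s$ escapes to $\infty$ (its real part tends to $+\infty$ along the tail and stays in a single strip $R_{s_0}$), the set $\overline{\beta_s}=\overline{\gamma_s}\cup\{\infty\}$ sits inside a ``horizontal'' region, and its complement in $\RS$ contains the lower and upper half-strips adjacent to $R_{s_0}$. The cleanest argument is: any two points of $(\RS\setminus\{z_0(s)\})\setminus\overline{\beta_s}$ can be joined by a path avoiding $\overline{\beta_s}$, using that $\overline{\beta_s}$ is contained in the closed strip $R_{s_0}$ (so one can route paths through neighbouring strips and around the hair near $\infty$). Adjoining the single point $\infty$ to a non-separating unbounded continuum whose complement is connected through a neighbourhood of $\infty$ does not disconnect the sphere, since near $\infty$ the complement of the strip is connected.

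The main obstacle I anticipate is the rigorous passage from ``each $P_s^n$ is non-separating'' to ``the intersection $\bigcap_n P_s^n$, and ultimately $\overline{\gamma_s}$, is non-separating,'' because a decreasing intersection of non-separating continua need not in general be non-separating without some care (one must control the complementary components and rule out that bounded complementary components of the $P_s^n$ persist in the limit to trap a region). I would address this by exploiting that the rectangles $V(a_n,b_{n+k},M_n)$ are convex (hence the complement of each rectangle is connected and remains connected under the conformal maps $L_{\lambda,s_j}$, which send complements of rectangles to complements of Jordan domains), so that $\C\setminus P_s^n$ is connected for every $n$; then the complement of the decreasing intersection is the increasing union of the connected complements $\C\setminus P_s^n$, which is connected. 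This monotone-union argument sidesteps the delicate general statement and is the heart of the proof. Finally, I would combine this with $\overline{\gamma_s}\subseteq\bigcap_n P_s^n$ — though here I must be slightly careful, since $\overline{\gamma_s}$ may be a proper subset of the intersection, so I would instead argue directly that $\C\setminus\overline{\gamma_s}\supseteq\bigcup_n(\C\setminus P_s^n)$ together with a connected neighbourhood of any point of $\bigcap_n P_s^n\setminus\overline{\gamma_s}$, ensuring connectivity of the full complement.
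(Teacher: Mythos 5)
There is a genuine gap, and it sits precisely where you flagged your own main worry. Proposition \ref{prop:Family P} gives the inclusions $\alpha_s^k\subseteq\bigcap_{n=1}^\infty P_s^n\subseteq \gamma_s$: the nested intersection of the pulled-back rectangles is contained \emph{in} the hair, and indeed in the bounded rectangle $V(\zeta,b_k,M_0)$, so it is only a compact piece of $\gamma_s$ around the arc $\alpha_s^k$. Your argument needs the reverse containment $\overline{\gamma_s}\subseteq\bigcap_n P_s^n$ in order to write $\C\setminus\overline{\gamma_s}\supseteq\bigcup_n(\C\setminus P_s^n)$, and that containment is false for elementary reasons: $\gamma_s$ is unbounded while every $P_s^n$ lies in the fixed compact set $V(\zeta,b_k,M_0)$, and $\overline{\gamma_s}$ (which in the cases of interest is all of $I_s$, by Lemma \ref{lem:Continuum 4}) is larger still. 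Your monotone-union-of-complements argument is a perfectly good proof that the compact set $\bigcap_n P_s^n$ does not separate the plane, but that says nothing about $\overline{\gamma_s}$: you do not have the subset relation in the direction you need, and even if you did, a subset of a non-separating set can separate (a circle inside a closed disc). The final sentence of your proposal, where you try to patch this by adjoining ``a connected neighbourhood of any point of $\bigcap_n P_s^n\setminus\overline{\gamma_s}$,'' is built on the same reversed inclusion and cannot be repaired within this framework.

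The statement genuinely requires a dynamical input, and the paper supplies it through Corollary \ref{cor:Montel} (the blow-up property coming from $J(\E)=\C$). The paper's proof runs as follows: if $\overline{\gamma_s}$ separated $\C$, some complementary component $U$ would be contained in the strip $R_{s_0}$ with $\partial U\subseteq\overline{\gamma_s}\subseteq I_s$; since $\E^n(\partial U)\subseteq R_{s_n}$ and no point of $I_s$ lies on a line $\Imm(w)=(2l+1)\pi$ (the itinerary of such a point ends in all zeros, while $s\in\Omega$), each $\E^n(U)$ must avoid every such line, so $\bigcup_{n}\E^n(U)$ omits an unbounded set, contradicting Montel's theorem. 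If you want to salvage your write-up, this normality argument --- or some equivalent use of the fact that the Julia set is the whole plane --- is the missing ingredient; no purely topological manipulation of the sets $P_s^n$ will close the gap. Your remarks about adjoining the point at infinity are essentially fine but moot until the planar part is established.
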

\begin{proof}
    It is enough to prove that $\overline{\gamma_s}$ does not separate the complex plane $\C$. Let us assume that the $\overline{\gamma_s}$ separates the plane $\C$. Then complement of the $\overline{\gamma_s}$ contains at least two connected components. One of these sets can be chosen in such a way that it is unbounded. Then another one must be contained in $R_{s_0}$ -- the strip containing $\gamma_s$. Let us denote this set by $U$.
    Now, we want to prove that $\E^n(U)\subseteq R_{s_n}$ for every $n\geqslant 1$. The boundary of $U$ is contained in $\overline{\gamma_s}$ so $\E^n(U)\cap R_{s_n}=\varnothing$. If $\E^n(U)\cap R_{k}=\varnothing$ for $k\neq s_n$ then $\E^n(U)$ intersects the line given by the equation $\Imm(w) = (2l+1)\pi$ for some $l\in\Z$. But then $E^n(\overline{\gamma_s})$ also intersects that line, which is a contradiction because every point on such a line has an itinerary which ends in all zeros. By the same reasoning we can conclude that $\E^n(U) \subseteq \intt R_{s_n}$ for all $n\geqslant 0$. But then the sum of all sets $\E^n(U)$ does not contain lines given by the equation $\Imm(w) = (2l+1)\pi$, $l\in\Z$. And this is a contradiction with Montel's theorem (Corollary \ref{cor:Montel}).
\end{proof}

\begin{lem}\label{lem:Continuum 3}
    Let $t_{0}, t_{1}, \ldots$ be a sequence of blocks of integers. Suppose that for every $m\in\N$ at least one term in the block $t_m$ is non-zero. There exists a sequence of natural numbers $(n_q)_{q\in\N}$ such that if
    $$s = 0_{n_0}t_{0}0_{n_1}t_{1}0_{n_2}t_{2}\ldots,$$
    then $\beta_s$ accumulates on itself.
\end{lem}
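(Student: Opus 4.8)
The plan is to verify the definition of ``accumulates on itself'' directly, by showing that the accumulation end of the hair returns arbitrarily close to every point of $\gamma_s$. Parametrising $\beta_s$ by $\varphi\colon[0,+\infty)\to\RS$ with $\varphi(0)=\infty$ so that the parameter tending to $+\infty$ corresponds to $\eta\to 0$ in $\gamma_s$, it suffices to prove the single inclusion $\gamma_s\subseteq A(\gamma_s)$: once this holds, the closedness of $A(\gamma_s)$ in $\RS$ together with the unboundedness of $\gamma_s$ (choosing points $z_k\in\gamma_s$ with $z_k\to\infty$, each $z_k\in A(\gamma_s)$, and diagonalising) forces $\infty\in A(\gamma_s)$ as well, and hence $\overline{\beta_s}\subseteq A(\gamma_s)$, which is exactly the assertion that $\beta_s$ accumulates on itself.

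First I would fix a way to cover $\gamma_s$ by controllable pieces. Recall that $\gamma_s=\bigcup_{n\geqslant 1}L_{\lambda,s_0}\circ\cdots\circ L_{\lambda,s_{n-1}}(\omega_{\sigma^n(s)})$ and that $\omega_{\sigma^n(s)}=\bigcup_{j}\alpha^{\,j}_{\sigma^n(s)}$; moreover the inclusion $\E(\omega_{\hat s})\subsetneq\omega_s$ shows that the sets $L_{\lambda,s_0}\circ\cdots\circ L_{\lambda,s_{n-1}}(\omega_{\sigma^n(s)})$ increase with $n$. Consequently the arcs $L_{\lambda,s_0}\circ\cdots\circ L_{\lambda,s_{N-1}}(\alpha^{\,j}_{\sigma^N(s)})$, taken over $j\in\N$ and over those lengths $N$ at which a block of zeros begins, already cover $\gamma_s$, and a fixed point $z$ lies in such an arc for \emph{all} sufficiently large $N$. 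By Proposition \ref{prop:Family P} the pullbacks $L_{\lambda,s_0}\circ\cdots\circ L_{\lambda,s_{N-1}}(V(a_N,b_{N+j},M_l))$ are transversally thin neighbourhoods of these arcs, whose transverse width is governed by the contraction of the $N$ inverse branches and hence tends to $0$ as $N\to+\infty$.

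The construction of $(n_q)$ then proceeds by induction. At stage $q$ the initial segment of $s$ of length $N_q$ (namely $0_{n_0}t_0\cdots 0_{n_{q-1}}t_{q-1}$) is already determined; its entries satisfy $|s_i|\leqslant M_i$ provided $p\geqslant 1$ and each block $t_m$ was pushed far enough out, which is the first lower bound I impose on the $n_q$. I would then apply Proposition \ref{prop:Hair passes twice 2} with this fixed segment and a target rectangle $V(a_{N_q},b_{N_q+j_q},M_{l_q})$ chosen from a schedule that exhausts the covering arcs; it yields a threshold $K_q$, and I set $n_q\geqslant K_q$ (also demanding $n_q\to+\infty$). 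The conclusion is that $\gamma_s$ passes twice through the thin pullback set above. Crucially, by the proof of Proposition \ref{prop:Hair passes twice 1} the doubling strand is a deep inverse-branch image $\nu_2$ of a tail, so it lies at parameters $\eta\to 0$ as $n_q\to+\infty$; since it crosses the pullback set from side to side it runs, transversally within the shrinking width, along the whole native arc, and therefore passes within $\varepsilon$ of every point of that arc. Because the covering arcs containing a fixed $z$ recur at all sufficiently late stages, a diagonal choice of the offsets $j_q$ and bounds $l_q$ guarantees that each $z\in\gamma_s$ is approached, from the accumulation end, to within an error tending to $0$. This gives $\gamma_s\subseteq A(\gamma_s)$.

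The main obstacle is the bookkeeping in this last step: coordinating the freely chosen blocks $0_{n_q}$ with the prescribed blocks $t_q$ so that (i) the itinerary stays in some $\Sigma_M^p$, (ii) the target rectangles available at stage $q$, which by Proposition \ref{prop:Hair passes twice 2} must sit at the position $N_q$ of the zero block, nevertheless suffice to approach every point of $\gamma_s$, and (iii) the approaching strands genuinely occur at $\eta\to 0$ with transverse error $\to 0$ simultaneously. The self-referential flavour -- the covering arcs depend on $s$, which is still being built -- is resolved by noting that each arc and each threshold $K_q$ depends only on the already-fixed initial segment, so enlarging $n_q$ never disturbs earlier choices.
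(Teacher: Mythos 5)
Your proposal is correct and follows essentially the same route as the paper: normalise the blocks $t_j$ by prepending zeros so the itinerary lies in some $\Sigma_M^p$, choose each $n_q$ above the threshold from Proposition \ref{prop:Hair passes twice 1}/\ref{prop:Hair passes twice 2} so that $\gamma_s$ passes twice through the pullback rectangles, and conclude via Proposition \ref{prop:Family P} and the expansion of $\E$ that these shrinking pullbacks force accumulation on the arcs $\alpha_s^k$ and hence on all of $\gamma_s$ and on $\infty$. The only cosmetic difference is that you cover $\gamma_s$ directly by pullback arcs indexed by the positions of the zero blocks, whereas the paper first establishes accumulation on the tail $\omega_s$ and then transports this to the rest of $\gamma_s$ via the shifted itineraries $\sigma^{p_k}(s)$; the two bookkeeping schemes are equivalent.
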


\begin{proof}
    Let $m_j\geqslant 1$ be the length of the block $t_{j}$. We choose any $M\in\N$ and $p\in\Z_+$.
    Then we modify the blocks $t_{j}$ for $j\geqslant 0$. Since each block $t_{j}$ has finite length, there exist natural numbers $c_j$ such that if we denote terms of the sequence $0_{c_j}t_{j}$ as $a_0, a_1, \ldots, a_{c_j+m_j-1}$, then for each $k\in\{0,1,\ldots, c_j + m_j - 1\}$ we have $|a_k|\leqslant M_k = M+kp$. Therefore, for every $N\geqslant 0$ the itinerary $s^N$ defined by
    $$s^N = 0_{c_N}t_{N}0_{c_{N+1}}t_{{N+1}}\ldots,$$
    is also an element of $\Sigma_M^p$. We have $s^N=\sigma^{\alpha(N)}(s^0)$ for some number $\alpha(N)$.
    By changing the notation by $0_{c_j}t_{j} \mapsto t_{j}$ we can consider, in the proof of this theorem, finite blocks $t_{j}$ that satisfy the following conditions:
    \begin{itemize}
        \item For every $j\geqslant 0$, the block $t_{j}$ contains at least one non-zero element,
        \item For every $N\geqslant 0$, the itinerary $s^N = t_{N}t_{{N+1}}t_{{N+2}}\ldots$ is an element of $\Sigma_M^p$.
    \end{itemize}
    Now we can proceed to add zeros to the itinerary $s^0$ so that the corresponding hair has the property of accumulating on itself. Let
    $$s=0_{n_0}t_{0}0_{n_1}t_{1}0_{n_2}t_{2}\ldots.$$ Let $j\in\N$. We want to determine the length of blocks of zeros. Let
    $$
    q_j = \sum_{p=0}^j m_p + \sum_{p=0}^{j} n_p,
    $$
    where we assume that the lengths of blocks $0_{n_0}$, $0_{n_1}$, $\ldots$, $0_{n_j}$ have already been determined. So $q_j$ is equal to the number of elements in the sequence $s$ that precede the block $0_{n_{j+1}}$.
    Now, let $\hat{s}_j(u)$ be an itinerary of the form
    $$
    \hat{s}_j(u) = 0_{n_0}t_{0}0_{n_1}t_{1}\ldots 0_{n_j}t_{j}0_{n_{j+1}}u,
    $$
    where $u\in\Sigma_M^p$. Let $n_{j+1}$ be a number such that the hair corresponding to the itinerary $0_{n_{j+1}}u$ passes twice through the rectangle $V(a_{q_j}, b_{2q_j}, M_{q_j})$. The existence of such a number is ensured by Proposition \ref{prop:Hair passes twice 1}. 
    According to Proposition \ref{prop:Hair passes twice 2}, the hair corresponding to the itinerary $\hat{s}_j(u)$ passes twice through the set
    $$
    L_{\lambda, s_0} \circ L_{\lambda, s_1} \circ \ldots \circ L_{\lambda, s_{q_j-1}}\left(V(a_{q_j}, b_{2q_j}, M_{q_j})\right),
    $$
    where $s_0$, $s_1$, $\ldots$, $s_{q_j-1}$ represent the first $q_j$ terms of the itinerary $\hat{s}_j(u)$. 
    This property holds for any $u\in\Sigma_M^p$. Thus, we can substitute $u$ with the itinerary $t_{{j+1}}0_{n_{j+2}}t_{{j+2}}\ldots$. This itinerary is from the set $\Sigma_M^p$ because it was obtained by adding zeros to the itinerary $s^N\in\Sigma_M^p$. 
    Thus, we obtain that the hair $\gamma_s$ passes twice through the set  
    $$
    L_{\lambda, s_0} \circ L_{\lambda, s_1} \circ \ldots \circ L_{\lambda, s_{q_j-1}}\left(V(a_{q_j}, b_{2q_j}, M_{q_j})\right)
    $$    
    for each $j\in\N$. Referring to Proposition \ref{prop:Family P} and the fact that $|\E'(z)|\geqslant \zeta > 1$ for every $z$ with $\Ree(z)\geqslant\zeta$, we get that $\gamma_s$ accumulates on any point of the set $\alpha_s^{q_j}$. Since this property holds for every $j\geqslant 0$ then we conclude that $\gamma_s$ accumulates on the entire tail $\omega_s$.

    To demonstrate that the set $\gamma_s$ accumulates on itself, not just on the tail $\omega_s$, let us consider the itineraries $\sigma^{p_k}(s)$, where $p_k = q_k + n_{k+1}$, $k\in \mathbb{N}$. This itinerary is of the form
    \begin{align*}
        \sigma^{p_k}(s) &= t_{m_{k+1}}0_{n_{k+2}}t_{m_{k+2}}0_{n_{k+3}}t_{m_{k+3}}\ldots.
    \end{align*}
    It also contains zero blocks. The block $0_{n_j}$ was constructed in such a way that the hair $\gamma_{0_{n_j} u}$ passes twice through the rectangle $V(a_{q_j}, b_{2q_j}, M_{q_j})$. Therefore, it passes twice through any rectangle $V(a_{l}, b_{2l}, M_{l})$ as long as $l\leqslant q_j$. Hence, the hair corresponding to the itinerary $\sigma^{p_k}(s)$ accumulates on its tail $\omega_{\sigma^{p_k}(s),\zeta_{p_k}}$. Thus, we obtain that the set $\gamma_s$ accumulates on the set
    \begin{align*}
        L_{\lambda, s_0} \circ L_{\lambda, s_1} \circ \ldots \circ L_{\lambda, s_{p_k-1}} \left( \omega_{\sigma^{p_k}(s),\zeta_{p_k}} \right),
    \end{align*}
    for every $k\geqslant 0$. Thus, the set $\gamma_s$ accumulates on itself.
    
    From the above construction, it follows that the set $\gamma_s$ also accumulates on the point $\infty$. Therefore, the curve $\beta_s$ accumulates on itself. 
\end{proof}

\begin{lem}\label{lem:Continuum 4}
    Let $t_{0}, t_{1}, \ldots$ be a sequence of blocks of integers. Suppose that for every $m\in\N$ at least one term in the block $t_m$ is non-zero. There exists a sequence of natural numbers $(n_q)_{q\in\N}$ such that if
    $$s = 0_{n_0}t_{0}0_{n_1}t_{1}0_{n_2}t_{2}\ldots,$$
    then $I_s\cup\{\infty\} = \overline{\beta_s}$.
\end{lem}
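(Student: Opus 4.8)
The statement splits into two inclusions, $\overline{\beta_s}\subseteq I_s\cup\{\infty\}$ and $I_s\cup\{\infty\}\subseteq\overline{\beta_s}$, and the first is immediate. In the proof of Lemma \ref{lem:1dim continuum} we already saw that $I_s$ is closed in $\C$, so $\overline{\gamma_s}\subseteq I_s$; since $\infty\in\overline{\beta_s}$ (because $\lim_{\eta\to+\infty}\gamma_s(\eta)=\infty$), this gives $\overline{\beta_s}=\overline{\gamma_s}\cup\{\infty\}\subseteq I_s\cup\{\infty\}$. Thus the whole content of the lemma is the reverse inclusion, that every point of $I_s$ lies in $\overline{\gamma_s}\cup\{\infty\}$. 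Escaping points with itinerary $s$ already belong to $\gamma_s$, so I would reduce the problem to fixing $z\in I_s\setminus\gamma_s$ and proving $z\in\overline{\gamma_s}$.

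My plan for this is a pull-back argument in three steps. First I would establish the dynamical fact that such a $z$ returns to the far-right region infinitely often: if $\Ree(\E^n(z))$ were bounded above for all $n$, then $|\E^{n}(z)|=\lambda e^{\Ree(\E^{n-1}(z))}$ would be bounded, the orbit of $z$ would be bounded, and hence the symbols $s_n$ would be bounded as well; so a bounded forward orbit forces $s$ to be a bounded itinerary, and for every $z$ with unbounded orbit (in particular for all of $I_s$ when $s$ is unbounded) one has $\sup_n\Ree(\E^n(z))=+\infty$, which lets me extract $n_i\to\infty$ with $\Ree(\E^{n_i}(z))\to+\infty$. Next I would identify the geometry of $\E^{n_i}(z)$: it lies in $I_{\sigma^{n_i}(s)}$ with very large real part, and by the parametrization estimates \eqref{eq:Hair parametrization}--\eqref{eq:Remainder} every point of $I_{\sigma^{n_i}(s)}$ with sufficiently large real part lies arbitrarily close to the tail $\omega_{\sigma^{n_i}(s)}$, so I can choose $p_i\in\omega_{\sigma^{n_i}(s)}$ with $|p_i-\E^{n_i}(z)|\to0$. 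Finally, writing $z=L_{\lambda,s_0}\circ\cdots\circ L_{\lambda,s_{n_i-1}}(\E^{n_i}(z))$, the same composition of inverse branches carries $p_i$ into $\gamma_s$, and I would estimate $|L_{\lambda,s_0}\circ\cdots\circ L_{\lambda,s_{n_i-1}}(p_i)-z|$ using the strong contraction of the branches $L_{\lambda,\cdot}$ in the far-right region, so that this distance tends to $0$ and produces points of $\gamma_s$ converging to $z$.

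The hard part will be making this pull-back estimate uniform, since the inverse branches do not contract along the entire backward orbit: whenever $\E^m(z)$ comes near the singular value $0$ the branch $L_{\lambda,\cdot}$ is strongly expanding, so the total factor $1/|(\E^{n_i})'(z)|$ need not be small. To handle this I would combine Lemma \ref{lem:Mis}, which controls $|(\E^n)'(z)|$ from below by $|\Imm(\E^n(z))|$, with Koebe distortion (Theorem \ref{thm:Koebe}) on discs of definite size about $\E^{n_i}(z)$, ensuring that the preimage of $p_i$ stays genuinely close to $z$; this is the technical heart. A secondary difficulty is the possible exceptional point of $I_s$ with bounded forward orbit: the recurrence observation above removes it as soon as $s$ is unbounded, but for bounded $s$ such a point may exist, and to place it in $\overline{\gamma_s}$ I would invoke the accumulation-on-itself of Lemma \ref{lem:Continuum 3}, exactly as in Devaney and Jarque. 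Combining the two inclusions then yields $I_s\cup\{\infty\}=\overline{\beta_s}$.
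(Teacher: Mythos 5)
Your overall skeleton matches the paper's: the inclusion $\overline{\beta_s}\subseteq I_s\cup\{\infty\}$ via closedness of $I_s$ (already in Lemma \ref{lem:1dim continuum}), the reduction to showing $z\in\overline{\gamma_s}$ for $z\in I_s\setminus\gamma_s$, and the idea of locating a point of a shifted hair at bounded distance from $\E^{n}(z)$ and pulling it back by inverse branches. But the step you yourself call the technical heart has a genuine gap. A single application of Lemma \ref{lem:Mis} gives only $|(\E^{n_i})'(z)|\geqslant|\Imm(\E^{n_i}(z))|$, and this quantity does not tend to infinity: $\E^{n_i}(z)$ lies in the strip $R_{s_{n_i}}$, so its imaginary part is bounded by $(2|s_{n_i}|+1)\pi$ and can even be arbitrarily small when $s_{n_i}=0$. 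Hence the pulled-back distance, of order $2\pi/|(\E^{n_i})'(z)|$ up to distortion, need not tend to $0$; Koebe's theorem controls only the ratio of derivatives on a disc, not their size, so it cannot rescue this. What is needed is to \emph{chain} Lemma \ref{lem:Mis} over infinitely many intermediate times at which the orbit visits a strip $R_k$ with $k\neq 0$, so that each factor contributes at least $\pi$ and the total derivative grows like $\pi^{j}$. This is precisely why the paper performs the pull-back at the times $d_j$ marking the last non-zero entry of each block $t_j$, and factors $\E^{d_{n_j}}=\E^{d_{n_j}-d_{n_{j-1}}}\circ\cdots\circ\E^{d_{n_0}}$, obtaining preimages of a $4\pi$-disc inside $B\left(z,\frac{4}{\pi^{j}}\right)$. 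Without this decomposition your sequence of hair points has no reason to converge to $z$.

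Two further points. First, a point of $I_{\sigma^{n_i}(s)}$ with large real part is not ``arbitrarily close'' to the tail $\omega_{\sigma^{n_i}(s)}$, only within the strip height $2\pi$ of it, and only once its real part exceeds $\zeta_{n_i}$, which itself grows linearly in $n_i$; knowing merely that $\sup_n\Ree(\E^n(z))=+\infty$ does not produce infinitely many $n$ with $\Ree(\E^n(z))\geqslant\zeta_n$, so even the proximity step is not secured. The paper sidesteps this entirely: because the block $0_{n_{j+1}}$ is long, the hair $\gamma_{\sigma^{d_j}(s)}$ makes a deep excursion to the left (down to real part $-\mu_j$) and therefore sweeps within $4\pi$ of \emph{every} point of its strip with real part at least $-\mu_j-2\pi$; a separate contradiction argument shows $\Ree(\E^{d_j}(z))\geqslant-\mu_j-2\pi$ for infinitely many $j$. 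Second, because that argument makes no use of unboundedness of the orbit, the paper needs no case distinction for the exceptional bounded-orbit point, whereas your appeal to ``accumulation on itself'' for that point is not a proof: Lemma \ref{lem:Continuum 3} shows $\gamma_s$ accumulates on itself, not that it accumulates on the bounded-orbit point.
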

\begin{proof}
    According to Lemma \ref{lem:1dim continuum} it is enough to prove that every $z\in I_s$ is the element of the set $\overline{\gamma_s}$. Let $z\in I_s\setminus\gamma_s$.
    For $j\in\N$ let $d_j$ be such that $$\sigma^{d_j}(s) = e_j 0_{n_{j+1}} t_{{j+1}} 0_{n_{j+2}} t_{{j+2}}\ldots,$$
    where $e_j$ is the last non-zero element of the block $t_{j}$. Based on the construction presented in Section \ref{sec:Curly hairs} the curve $\gamma_{\sigma^{d_j + 1}(s)}$ intersects the disc $B(0,\varepsilon_j)$ and $\varepsilon_j$ is smaller the more zeros there are in the block $0_{n_{j+1}}$. Let us suppose that for every $j\in\N$ the radius $\varepsilon_j < \lambda$. Thus, every curve $\gamma_{\sigma^{d_j}(s)}$ intersects the half-plane $\{z\in\C: \Ree(z) < 0\}$. Let $\mu_j\in[0,+\infty)$ be the minimal real number such that the curve $\gamma_{\sigma^{d_j}(s)}$ is contained in the half-plane $\{z\in\C: \Ree(z)\geqslant -\mu_j\}$. The more zeros there are in the block $0_{n_{j+1}}$ the bigger is $\mu_j$. 
    For every $j\in\N$ one of two cases occurs: $\Ree(\E^{d_j}(z)) \geqslant -\mu_j - 2\pi$ or $\Ree(\E^{d_j}(z)) < -\mu_j - 2\pi$. Firstly, we want to show that for infinitely many $j\in\N$ the first condition holds. So let us assume that there exists $K$ such that for every $k\geqslant K$ we have $\Ree(\E^{d_k}(z)) < -\mu_j - 2\pi$.  Let us focus on points $\E^{d_k - 1}(z)$ and curves $\gamma_{\sigma^{d_k - 1}(s)}$. Let $v$ be the point on $\gamma_{\sigma^{d_k - 1}(s)}$ such that $\Ree(\E(v)) = -\mu_k$. Let us assume, for a moment, that $e_k>0$. We have $$\sqrt{\mu_k^2 + \pi^2(2e_k-1)^2} \leqslant |\E(v)| \leqslant \sqrt{\mu_k^2 + \pi^2(2e_k+1)^2}$$
    and 
    $$|\E^{d_k}(z)| \geqslant \sqrt{(\mu_k+2\pi)^2 + \pi^2(2e_k-1)^2}.$$
    Since $|\E(w)| = \lambda e^{\Ree(w)}$ for every $w\in\C$ then 
    \begin{equation}\label{eq:Closure}
        \Ree(\E^{d_k-1}(z)) > \Ree(v)
    \end{equation}
    if 
    $$(\mu_k+2\pi)^2 + \pi^2(2e_k-1)^2 > \mu_k^2 + \pi^2(2e_k+1)^2.$$ The above inequality is equivalent to the following one
    $$\mu_k > 2e_k\pi - \pi.$$ 
    If $e_k<0$ then, similarly, we get that $\mu_k > -2e_k\pi - \pi = 2|e_k|\pi - \pi$.
    For every $k\geqslant K$ we can set the length of the blocks $0_{n_{k+1}}$ large enough, so we can ensure that the above inequality, and therefore \eqref{eq:Closure}, holds for every $k\geqslant K$. 
    Since $v\in\gamma_{\sigma^{d_k - 1}(s)}$ and $\gamma_{\sigma^{d_k - 1}(s)}$ extends to the infinity to the right, then the distance between $\E^{d_k-1}(z)$ and the curve $\gamma_{\sigma^{d_k-1}(s)}$ is less than $2\pi$. 
    We have $|\Imm(\E^{d_k - 1}(z))| \geqslant \frac{\pi}{2}$, because even if $ \E^{d_k - 1}(z) \in R_0$ then $|\Imm(\E^{d_k - 1}(z))|\in \left(\frac{\pi}{2},\pi\right)$, because $\textrm{Arg}(\E^{d_k}(z))\in \left(\frac{\pi}{2},\frac{3\pi}{2}\right)\setminus\{ \pi \}$. Let $w_{k}\in\gamma_{\sigma^{d_{k}-1}(s)}$ be such that $|w_{k} - \E^{d_{k} - 1}(z)| < 2\pi$. 
    Let $\Phi_k$ be the inverse function of $E^{d_{k+1}-1 - d_k}$ which corresponds with the itinerary $s$ and which is defined on the set $B(\E^{d_{k+1} - 1}(z),r)$ for some $r \geqslant 2\pi$ ($\Phi_k$ is a composition of functions $L_{\lambda,q}$ for a proper choice of indexes $q$). Then, by Theorem \ref{thm:Koebe} and Lemma \ref{lem:Mis} we have
    $$|\Phi_k(w_{k+1}) - \E^{d_{k}}| = |\Phi_k(w_{k+1}) - \Phi_k(\E^{d_{k+1} - 1}(z))| \leqslant \frac{\frac{2}{\pi} \cdot 2\pi }{ (1-\frac{2\pi}{r})^2} = \frac{4}{(1-\frac{2\pi}{r})^2}.$$
    The greater is $k$ the greater are both $\mu_k$ and $|\E^{d_{k+1} - 1}(z)|$. Thus, by choosing $k\geqslant K$ large enough we can set $r$ in the above formula to be arbitrarily large. Therefore, there exists $k\geqslant K$ such that $|\Phi_k(w_{k+1}) - \E^{d_k}(z)| < 5$. But $\Phi_k(w_{k+1})$ is the element of the curve $\gamma_{\sigma^{d_k}(s)}$. Therefore, we have a contradiction with the assumption that $\Ree(\E^{d_k}(z)) < -\mu_k - 2\pi$. Thus, $\Ree(\E^{d_j}(z)) \geqslant -\mu_j - 2\pi$ for infinitely many $j\in\N$. 

    Let $U$ be an open neighbourhood of $z$. We want to show that $\gamma_s$ intersects $U$. Let $n_j$ be the increasing sequence of natural numbers such that $\Ree(\E^{d_{n_j}}(z))\geqslant -\mu_{n_j} - 2\pi$. Thus, the curve $\gamma_{\sigma^{d_{n_j}}(s)}$ intersects the disc $B(\E^{d_{n_j}}(z),4\pi)$. 
    For every $j\in\N$ let us represent the point $\E^{d_{n_j}}(z)$ as 
    $$\E^{d_{n_j}}(z) = \E^{d_{n_j} - d_{n_{j-1}}} \circ \E^{d_{n_{j-1}} - d_{n_{j-2}}} \circ \ldots \circ\E^{d_{n_1} - d_{n_0}}\circ\E^{d_{n_0}}(z).$$
    Using Lemma \ref{lem:Mis} there exists $V_j\subseteq B\left(z,\frac{4}{\pi^{j}}\right)$ such that $$\E^{d_{n_j}}(V_j) = B(\E^{d_{n_j}}(z),4\pi).$$ Therefore, there exists $j\in\N$ such that such $V_j$ is inside $U$, which means that $\gamma_s$ intersects $U$.
\end{proof}

\begin{proof}[Proof of Theorem \ref{thm:A}]
    Using Lemma \ref{lem:1dim continuum}, Lemma \ref{lem:Continuum 2} and Lemma \ref{lem:Continuum 3} we can apply Theorem \ref{thm:Curry}. Therefore, if lengths of blocks of zeros in $s$ are large enough, then the set $\overline{\beta_s}$ is an indecomposable continuum in $\RS\setminus\{z_0(s)\}$ and also in the Riemann sphere. According to Lemma \ref{lem:Continuum 4} we have $\overline{\beta_s} = I_s\cup\{\infty\}$. 
    To complete the proof, it is sufficient to show that $\overline{\beta_s}$ is equal to the accumulation set of the hair $\gamma_s$, which follows from Lemma \ref{lem:Continuum 3}.
\end{proof}

According to the above theorem, from any sequence of blocks $t_{0}, t_{1}, \ldots$, one can create an itinerary $s$ for which the closure of the set $\beta_s$ is an indecomposable continuum in the Riemann sphere. It is sufficient to insert blocks of zeros of appropriate length between consecutive elements of the sequence $t_{0}, t_{1}, \ldots$. The lengths of these blocks must form an unbounded sequence. It turns out that the lengths of these blocks cannot increase too slowly if we want to obtain a set $I_s$ which is an indecomposable continuum.

\begin{prop}\label{prop:Not continuum}
    For every $M\in\N$, every $p\in\Z_+$ and for every unbounded sequence $(n_k)_{k\in\N}$ of positive integers, there exists the itinerary $s\in\Sigma_M^p$ of the form
    $$s = 0_{n_0}t_{0}0_{n_1}t_{1}0_{n_2}t_{2}\ldots,$$
    such that $\overline{\beta_s} = \overline{\gamma_s} \cup \{\infty\}$ is not an indecomposable continuum in the Riemann sphere.
\end{prop}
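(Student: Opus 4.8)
The plan is to turn around the mechanism behind Theorem~\ref{thm:A}: the indecomposability there came solely from the fact that, for long enough blocks of zeros, the hair $\gamma_s$ accumulates on itself (Lemma~\ref{lem:Continuum 3}). Accordingly, given $M$, $p$ and the prescribed unbounded sequence $(n_k)_{k\in\N}$, I would choose the blocks $t_j$ so that $\gamma_s$ lands, i.e.\ so that its accumulation set is a single finite point. This already suffices: if $\gamma_s$ lands at a point $w\in\C$, then, parametrising $\beta_s$ by $\varphi\colon[0,+\infty)\to\RS$ with $\varphi(0)=\infty$ and $\varphi(t)\to w$ as $t\to+\infty$, the set $\overline{\beta_s}$ is an arc with endpoints $\infty$ and $w$, and it decomposes into the two proper subcontinua $\varphi([0,1])$ and $\varphi([1,+\infty))\cup\{w\}$, which meet only at $\varphi(1)$. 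Hence $\overline{\beta_s}$ is not indecomposable, and the whole task reduces to forcing the hair to land.

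To arrange this I would use the contrapositive of Proposition~\ref{prop:Hair passes twice 1}. The number $K$ of leading zeros needed to make a hair pass twice through the rectangle $V(a_n,b_{n+j},M_l)$ grows with the depth $n$ of the target; in the construction of Lemma~\ref{lem:Continuum 3} the relevant target at stage $j$ is $V(a_{q_j},b_{2q_j},M_{q_j})$, where $q_j$ is the total number of symbols preceding the block $0_{n_{j+1}}$. The idea is to choose the blocks $t_j$ \emph{long}, with entries saturating the admissible bound $|s_i|\leqslant M+ip$, so that $q_j$ and hence the double-passage threshold $K_j$ grow faster than the fixed sequence $(n_k)$. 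Then for all large $k$ one has $n_{k+1}<K_j$, so the prescribed blocks of zeros are too short to produce any double passage, and none of the shifted hairs $\gamma_{\sigma^{p_k}(s)}$ becomes tangled. Throughout, the resulting $s=0_{n_0}t_00_{n_1}t_1\ldots$ stays in $\Sigma_M^p$ by construction, exactly as in the reduction step of Lemma~\ref{lem:Continuum 3}.

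It remains to convert ``no double passages'' into genuine landing. Here I would analyse the unique non-escaping point $z_s\in I_s$, whose orbit shadows one of the repelling fixed points $q_\pm\in R_0$. Since $q_\pm$ is repelling we have $|\E'(q_\pm)|=|q_\pm|=\lambda e^{\Ree(q_\pm)}>1$, so $\Ree(q_\pm)>-\ln\lambda$, and along the long blocks of zeros the orbit of $z_s$ stays in a region where $|\E'(\E^k(z_s))|=\lambda e^{\Ree(\E^k(z_s))}\geqslant\rho>1$. A Koebe-distortion estimate (Theorem~\ref{thm:Koebe}) applied to the nested pullbacks $L_{\lambda,s_0}\circ\cdots\circ L_{\lambda,s_{n-1}}$ of a fixed bounded rectangle then shows that the pieces of $\gamma_s$ carrying small parameters have diameters controlled by $\prod_{k<n}|\E'(\E^k(z_s))|^{-1}\to0$, so that the lower end of the hair converges to $z_s$. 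The only way the hair could fail to land at $z_s$ is by swinging out repeatedly, and such swings are precisely the double passages through deep targets that the construction has ruled out; therefore $\overline{\gamma_s}=\gamma_s\cup\{z_s\}$ and $\gamma_s$ lands.

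The main obstacle is this last, converse, implication. Proving that suppressing the specific double passages of Lemma~\ref{lem:Continuum 3} suppresses \emph{all} extraneous accumulation of $\gamma_s$ is delicate, since a priori the hair might accumulate on itself through a passage not captured by the targets $V(a_l,b_{2l},M_l)$; one must show that any self-accumulation would force a double passage through one of these rectangles, and then that the counting $n_{k+1}<K_j$ genuinely excludes it. Equally delicate is maintaining the uniform expansion $|\E'|\geqslant\rho>1$ along the orbit of $z_s$ through the transitions between the long zero blocks and the saturated blocks $t_j$, within the merely linear growth budget $|s_i|\leqslant M+ip$ of $\Sigma_M^p$: a single large non-zero symbol lifts the orbit by a definite amount, and the crux is to verify that, because $(n_k)$ is fixed in advance, sufficiently long intervening blocks always keep $\Ree(\E^k(z_s))$ above the critical value $-\ln\lambda$.
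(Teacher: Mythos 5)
Your reduction is the right one and is exactly the paper's: if $\gamma_s$ lands at a finite point $w$, then $\overline{\beta_s}$ is an arc from $\infty$ to $w$ and splits into two proper subarcs meeting at an interior point, so it cannot be indecomposable. The problem is the mechanism you propose for forcing landing, and you acknowledge the gap yourself; it is fatal as written. Preventing double passages through the particular rectangles $V(a_{q_j},b_{2q_j},M_{q_j})$ used in Lemma~\ref{lem:Continuum 3} only removes a \emph{sufficient} condition for self-accumulation; it does not show that $A(\gamma_s)$ is a single point. A hair can fail to land while never passing twice through any of these specific targets (its accumulation set could be a nondegenerate arc or worse), and your Koebe/expansion argument pushing the lower end of the hair onto the non-escaping point $z_s$ is precisely the hard, bounded-itinerary-type analysis that the paper's own Remark following this proposition says is not carried out here (it cites Rempe for only a sketch in the bounded case). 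Landing at $z_s$ would moreover be landing at a point with bounded orbit, a phenomenon this paper never establishes for any itinerary.

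The paper's actual proof sidesteps all of this using the Schleicher--Zimmer criterion quoted just before the proposition: for $s\in\Omega$, the hair $\gamma_s$ lands at an \emph{escaping} point if and only if $s$ is a fast itinerary. It therefore suffices to exhibit a fast itinerary in $\Sigma_M^p$ carrying the prescribed zero blocks. One takes $s_k=k$ (which lies in $\Sigma_0^1\subseteq\Sigma_M^p$) and inserts the block $0_{n_p}$ immediately after a term $s_{l_p}=l_p\geqslant F^{n_p+p}(1)$. The non-zero entries adjacent to each zero block are then tower-exponentially large --- large enough that $|s_{n+k}|>AF^k(x)$ can be achieved across the entire zero block (this is where Lemma~\ref{lem:x and A > 2} enters) --- while still satisfying the linear bound $|s_i|\leqslant i$, because their \emph{positions} are themselves tower-exponentially large. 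This is the point your plan misses: you try to saturate $|s_i|\leqslant M+ip$ with long blocks in order to kill double passages, whereas what is needed (and what the linear budget in fact permits) is to place entries of absolute value at least $F^{n_p+p}(1)$ next to the zero blocks, so that the itinerary is fast and the landing is guaranteed by the quoted theorem rather than by an unproven contraction argument.
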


The proof of the above theorem involves directly indicating such an itinerary. Firstly, we present one definition and one lemma. Let us recall that the function $F:\R\longrightarrow\R$ is defined by $F(t)=e^t-1$.

\begin{definition}
    The itinerary $s=(s_0s_1\ldots)$ is fast if 
   $$\forall_{x,A > 0} \hspace{1mm} \exists_{N\in\N} \hspace{1mm} \forall_{n\geqslant N} \hspace{1mm} \exists_{k\in\N} \hspace{1mm} |s_{n+k}| > A F^k(x).$$
\end{definition}

According to \cite[Corollary 6.9]{SZ}, if $s\in\Omega$ then $\gamma_s$ lands at the escaping point if and only if $s$ is a fast itinerary. On the other hand, if $\gamma_s$ lands at some point, then the closure of $\beta_s$ is not an indecomposable continuum.

\begin{lem}\label{lem:x and A > 2}
        If $x,A > 2$ and $y>Ax$ then $F^n(y)>AF^n(x)$ for all $n\geqslant 1$.
    \end{lem}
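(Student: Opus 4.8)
The plan is to argue by induction on $n$, after reducing the whole statement to a single-step comparison. First I would record two elementary facts about $F(t)=e^t-1$: it is strictly increasing on $\R$, and $F(t)>t$ for every $t>0$ (since $e^t-1>t$). Combined with $x>2>0$, these give $F^n(x)\geqslant x>0$ for all $n$, so every iterate $F^n(x)$ stays positive. This positivity is exactly what makes the induction self-sustaining, since the inductive step will need to apply the base estimate with $F^n(x)$ in place of $x$.

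The key step to isolate is the following \emph{one-step estimate}: if $a>0$ and $A>1$, then $b>Aa$ implies $F(b)>AF(a)$. To prove it I would use monotonicity of $F$ to write $F(b)>F(Aa)=e^{Aa}-1$, so that it suffices to verify $e^{Aa}-1>A(e^a-1)$. Substituting $u=e^a>1$, this becomes $u^A-1>A(u-1)$, which is precisely the strict form of Bernoulli's inequality $(1+(u-1))^A>1+A(u-1)$, valid because $A>1$ and $u-1>0$.

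With the one-step estimate in hand, the induction is immediate. The base case $n=1$ is the estimate applied to $a=x$ and $b=y$, where $x>2>0$, $y>Ax$, and $A>2>1$. For the inductive step, assuming $F^n(y)>AF^n(x)$, I set $a=F^n(x)$ and $b=F^n(y)$: positivity of $a$ follows from the first paragraph, and $b>Aa$ is exactly the inductive hypothesis, so the one-step estimate yields $F^{n+1}(y)=F(b)>AF(a)=AF^{n+1}(x)$.

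The only real content lies in the single-step inequality, and the main thing to get right there is recognizing it as Bernoulli's inequality after the substitution $u=e^a$. The hypotheses $x,A>2$ are comfortably stronger than what the argument actually uses (namely $x>0$ and $A>1$), so no delicate bookkeeping with the constants is required; the stronger bounds presumably serve the applications of this lemma rather than its proof.
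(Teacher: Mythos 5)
Your proof is correct. Both you and the paper argue by induction, reducing everything to the single-step comparison $F(b)>F(Aa)>AF(a)$ via monotonicity of $F$; the only real difference is how the inequality $F(Aa)>AF(a)$ is justified. The paper writes $e^{Aa}-1>e^{A}e^{a}-1>Ae^{a}-A$, using $Aa>A+a$ (valid for $a,A>2$) and then $e^A>A$ — this is exactly where the hypotheses $x,A>2$ get consumed, together with the fact that $F^n(x)>2$ propagates under iteration. Your substitution $u=e^a$ turning the step into the strict Bernoulli inequality $u^A-1>A(u-1)$ is a clean alternative that needs only $a>0$ and $A>1$, so your observation that the stated hypotheses are stronger than your argument requires is accurate: the lemma as used in Proposition 6.11 only ever invokes it with $x,A>2$, so nothing is lost either way, but your version would let the constants be relaxed if that were ever needed.
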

    \begin{proof}
        For $n=0$ the inequality $F^n(y)>AF^n(x)$ is of the form $y>Ax$, therefore it holds. Let us suppose that the inequality $F^n(y) > AF^n(x)$ holds for $y>Ax$ with $x,A>2$. Since $F(t)$ is increasing, then $F^{n+1}(y)>F(AF^n(x))$. For every $r>2$ we have $Ar > A + r$. Therefore
        $$F^{n+1}(y)>F(AF^n(x)) = e^{AF^n(x)}-1 > e^Ae^{F^n(x)}-1>Ae^{F^n(x)}-A = A(e^{F^n(x)}-1)=AF^{n+1}(x).$$
    \end{proof}

\begin{proof}[Proof of Proposition \ref{prop:Not continuum}]

Based on the above findings, it is enough to construct a fast itinerary $u\in\Sigma_M^p$ of the appropriate block form. Let $s=(s_0s_1\ldots)$ be such that $s_k = k$ for all $k\in\N$. Now we want to modify $s$ by inserting zero blocks $0_{n_0},0_{n_1},0_{n_2},\ldots$ in appropriate places. Let us recall that $0_k$ is a block of zeros of length $k$. We insert zero blocks into sequence $s$ as follows. Firstly, let $(s_{l_p})_{p\in\Z_+}$ be a subsequence of $s=(s_0s_1\ldots)$ such that $s_{l_p} \geqslant F^{n_p+p}(1)$ for every $p\in\Z_+$. Then, we insert the block $0_{n_0}$ at the beginning of $s$, and for every $p\in\Z_+$ we insert the block $0_{n_{p}}$ into $s$ between $s_{l_p}$ and $s_{l_p+1}$. In this way, we obtain an itinerary $u$ of the form 
$$u = 0_{n_0}t_{0}0_{n_1}t_{1}0_{n_2}t_{2}\ldots,$$
for which, for every $p\in\Z_+$, the last element of $t_{{p-1}}$ is $s_{l_p}$ which is not smaller than $F^{n_p+p}(1)$. Then, for every $p\in\Z_+$, the first element of $t_{{p}}$ is not smaller than $F^{n_p+p}(1)$ too.
The $p$-th block of zero has a length $n_p$, so the lengths of blocks of zeros are growing because the sequence $(n_p)_{p\in\N}$ is unbounded.

Note that if the condition from the definition of a fast itinerary is satisfied for some $x,A>0$, then it is satisfied for all $x',A'>0$ such that $x'<x$ and $A'<A$. Thus, let us choose $x,A>2$. We want to prove that there is $N\in\N$ such that for every $n\geqslant N$ there exists $k\in\N$ such that $|u_{n+k}| > AF^k(x)$.

    There exists $K\in\N$ such that $F^K(1) > Ax$. Let $N\in\N$ be such that $u_N$ is the last element of the block $t_{K}$. Therefore $u_N \geqslant F^{K+n_K}(1)$. Moreover, for every $n\geqslant N$ either $u_n = 0$ or $u_n \geqslant F^{K+n_K}(1)$. For $n \geqslant N$ such that $u_n \geqslant F^{K+n_K}(1)$ we have $u_{n+0} > F^K(1) > Ax = A F^0(x)$, so $k=0$ is the number we are looking for in this case. If $u_n =0$ then it is an element of the zero block $0_{n_p}$ for some $p\geqslant K+1$. Then there exists $k$ such that $u_{n+k}$ is the first element of block $t_{{p}}$. For such $k$ we have $u_{n+k} \geqslant F^{p+n_p}(1)$. We have $F^p(1)> F^K(1) > Ax$. According to Lemma \ref{lem:x and A > 2} we have $F^{p+k}(1) > A F^k(x)$. Since $k\leqslant n_p$ then $u_{n+k} > A F^k(x)$, so such $k$ is the number we are looking for in this case.

    Therefore, $u$ is a fast itinerary. Thus $\beta(u)$ does not land and the closure of $\beta(u)$ in the Riemann sphere is not an indecomposable continuum. We have $u\in\Sigma_0^1$ because $s\in\Sigma_0^1$, thus $u\in\Sigma_M^p$ for every $M\in\N$ and every $p\in\Z_+$.
\end{proof}

\begin{rem}
    We can ask a similar question for itineraries from the set $\Sigma_M^0$ i.e., itineraries bounded by constant $M$. However, all the itineraries from this set are not fast, so the above proof does not work in this case. In paper \cite{REM2}, L. Rempe shows a sketch of the proof of the theorem that there is a bounded itinerary of the form $0_{n_0}t_0 0_{n_1}t_1\ldots$ for which the set $I_s$ is not an indecomposable continuum.
\end{rem}

\section{Dynamics -- proof of theorem B}\label{sec:Dynamics}

In this section we focus on the dynamics in the set $I_s$. We prove Theorem \ref{thm:B}. Firstly, we show a simple observation.
\begin{lem}
    If $s$ is an unbounded itinerary then for every $z\in I_s$ the set $\{\Ree(w): w\in\Orb(z)\}\cap \R_+$ is unbounded and the orbit of $z$ is unbounded.
\end{lem}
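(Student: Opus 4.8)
The plan is to unwind the definition of the itinerary. Recall that $z \in I_s$ means $\E^n(z) \in R_{s_n}$ for every $n \in \N$, where $R_j = \{w : (2j-1)\pi < \Imm(w) \leqslant (2j+1)\pi\}$. Since $s$ is unbounded, there is a subsequence $(s_{n_k})_{k}$ with $|s_{n_k}| \to \infty$, and for each such index we have $\Imm(\E^{n_k}(z)) > (2|s_{n_k}| - 1)\pi$ in absolute value, so in particular $|\Imm(\E^{n_k}(z))| \to \infty$. The goal is to convert this unboundedness of the imaginary parts along the orbit into unboundedness of both the real parts (in $\R_+$) and of the orbit itself.

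First I would control the real parts using the relation $|\E(w)| = \lambda e^{\Ree(w)}$. For any point $w = \E^{n}(z)$ we have $\Imm(\E^{n+1}(z)) = \Imm(\lambda e^w)$, whose modulus is at most $|\E(w)| = \lambda e^{\Ree(w)}$. Applying this at $n = n_k - 1$ gives
\begin{equation*}
    (2|s_{n_k}| - 1)\pi < |\Imm(\E^{n_k}(z))| \leqslant \lambda e^{\Ree(\E^{n_k - 1}(z))},
\end{equation*}
so that $\Ree(\E^{n_k - 1}(z)) > \ln\bigl((2|s_{n_k}| - 1)\pi / \lambda\bigr) \to +\infty$ as $k \to \infty$. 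This exhibits a subsequence of the orbit (indexed by $n_k - 1$) whose real parts tend to $+\infty$, which immediately shows that $\{\Ree(w) : w \in \Orb(z)\} \cap \R_+$ is unbounded. Unboundedness of the orbit in $\C$ then follows, since a point with arbitrarily large real part has arbitrarily large modulus.

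The only mild subtlety, and the step I would treat most carefully, is the boundary case $n_k = 0$ (when the large symbol is the very first one); but this is harmless, since we may simply pass to the subsequence of indices $n_k \geqslant 1$, which is still infinite because $|s_{n_k}| \to \infty$ rules out only finitely many small indices. An alternative route would bypass the real parts entirely: the orbit is unbounded directly because $|\Imm(\E^{n_k}(z))| \to \infty$ forces $|\E^{n_k}(z)| \to \infty$, so $\Orb(z)$ is already unbounded from the imaginary parts alone. I would present the real-part argument as the main line since the statement explicitly asks for unboundedness of the positive real parts, and the imaginary-to-real transfer via $|\E(w)| = \lambda e^{\Ree(w)}$ is the essential mechanism. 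No genuine obstacle is expected here; the lemma is a direct consequence of the strip decomposition and the exponential growth of $|\E|$ in the real direction.
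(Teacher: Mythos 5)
Your proof is correct and uses the same mechanism as the paper: the strip decomposition forces $|\Imm(\E^{n_k}(z))|\to\infty$ along indices where $|s_{n_k}|$ is large, and the identity $|\E(w)|=\lambda e^{\Ree(w)}$ transfers this to unboundedness of the positive real parts one step earlier in the orbit. The paper's proof is the same argument stated more tersely (it derives orbit unboundedness first and then the real-part claim), so there is nothing to add.
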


\begin{proof}
    For every $z\in I_s$ the set $\Orb(z)$ is unbounded because $\E^n(z)\in R_{s_n}$ and the sequence $(s_n)_{n\in \N}$ is unbounded. Using the equality
    $$|\E^n(z)| = \lambda e^{\Ree(\E^{n-1}(z))}$$
    we get that the set $\{\Ree(w): w\in\Orb(z)\}\cap\R_+$ is unbounded.
\end{proof}
According to the above lemma, if $s$ is an unbounded itinerary, then for every $z\in I_s$ the $\omega$-limit set of $z$ contains $\infty$.

Now we proceed to the proof of Theorem \ref{thm:B}. First, we introduce a series of lemmas and propositions.

\begin{lem}\label{lem:Exp inequality}
    For $A,B > 0$ and $k\in\N$ there exists $N\in\N$ such that for every $n\geqslant N$ 
    $$\displaystyle{An^k + B\sum_{k=0}^{n}\E^k(0) <\E^{n+1}(0).}$$
\end{lem}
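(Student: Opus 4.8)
The plan is to exploit the super-exponential growth of the orbit $x_n := \E^n(0)$ of the singular value, and to reduce the whole inequality to the comparison "polynomial in $x_n$ versus $e^{x_n}$". The one fact about $\E$ that I need is that this orbit escapes. Since $\lambda>\tfrac1e$, the real map $x\mapsto\lambda e^x$ satisfies $\lambda e^x>x$ for every $x\in\R$ (the graph of $\lambda e^x$ lies strictly above the diagonal, because $\max_{x\in\R}xe^{-x}=\tfrac1e<\lambda$), so it has no real fixed point. Hence $x_0=0<x_1<x_2<\cdots$ is strictly increasing, and having no finite limit it must satisfy $x_n\to+\infty$. Throughout I write the summation index as $j$ (in the statement it is written $k$, clashing with the fixed exponent $k$ in $n^k$).

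First I would extract two consequences of the recurrence $x_{n+1}=\lambda e^{x_n}$. Because $x_{n+1}/x_n=\lambda e^{x_n}/x_n\to\infty$, there is $n_0$ such that $x_{j+1}\geqslant 2x_j$ for all $j\geqslant n_0$; summing the resulting geometric tail backwards from $n$ yields
$$\sum_{j=0}^{n}x_j\;\leqslant\;2x_n+C_0\qquad\text{for all }n\geqslant n_0,$$
where $C_0=\sum_{j=0}^{n_0-1}x_j$ is a fixed constant. Likewise, since $x_{n+1}-x_n=\lambda e^{x_n}-x_n\to\infty$, there is $n_1$ with $x_{n+1}-x_n\geqslant 1$ for $n\geqslant n_1$, whence $x_n\geqslant n-n_1$, i.e. $n\leqslant x_n+n_1$ for all large $n$. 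This last estimate is the device that converts the explicit index-dependence of the left-hand side into dependence on the value $x_n$.

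Combining these bounds, for $n\geqslant\max(n_0,n_1)$ the left-hand side is dominated by
$$An^k+B\sum_{j=0}^{n}x_j\;\leqslant\;A(x_n+n_1)^k+2Bx_n+BC_0,$$
which is a polynomial in $x_n$ of degree $k$, whereas the right-hand side equals $x_{n+1}=\lambda e^{x_n}$. Since $x_n\to+\infty$ and $\lambda e^{x}$ dominates every polynomial in $x$ as $x\to\infty$, there is $N$ such that the displayed polynomial is strictly less than $\lambda e^{x_n}$ for all $n\geqslant N$, which is exactly the claimed inequality.

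The only genuine obstacle is the mismatch between the two ways the index enters: the term $An^k$ is polynomial in the running index $n$, while the dominating term $x_{n+1}$ is exponential in the value $x_n$, and a priori $n$ and $x_n$ need not be comparable. The unit-increment estimate $x_n\geqslant n-n_1$ resolves this completely, allowing me to replace $n^k$ by $(x_n+n_1)^k$ and thereby phrase everything as a single polynomial-versus-exponential comparison in $x_n$, after which the conclusion is immediate from $x_n\to+\infty$.
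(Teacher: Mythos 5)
Your proof is correct. It rests on the same underlying facts as the paper's --- the orbit $x_n=\E^n(0)$ grows super-exponentially, and an exponential beats any polynomial --- but the bookkeeping is organized differently. The paper splits the target into the two inequalities $2B\sum_{j=0}^n x_j<\E^{n+1}(0)$ and $2An^k<\E^{n+1}(0)$, proving the first by showing that the difference $\E^{n+1}(0)-2B\sum_{j=0}^n x_j$ eventually increases by more than $1$ at each step (hence is eventually positive), and the second via $Dn^k<\lambda e^{n}<\lambda e^{x_n}=\E^{n+1}(0)$ using $n<x_n$. You instead absorb both terms into a single polynomial in $x_n$, using the explicit geometric bound $\sum_{j=0}^n x_j\leqslant 2x_n+C_0$ coming from the eventual doubling $x_{j+1}\geqslant 2x_j$, together with the unit-increment bound $n\leqslant x_n+n_1$, and then make one polynomial-versus-exponential comparison. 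Your handling of the sum is arguably cleaner, since it produces an explicit linear bound in $x_n$ rather than arguing through a monotone difference sequence; otherwise the two arguments are equivalent in content, and your step converting the index $n$ into the value $x_n$ is exactly the paper's $n<\E^n(0)$ estimate in different clothing.
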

\begin{proof}
    First, let us prove that for any $C>0$ there exists $N$ such that for every $n\geqslant N$
    \begin{equation}\label{eq:Exp inequality 1}
        \displaystyle{C\sum_{k=0}^{n}\E^k(0) <\E^{n+1}(0).}
    \end{equation}
    Let the sequence $k_n$ be given by
    $$k_n = \displaystyle{\E^{n+1}(0) - C\sum_{k=0}^{n}\E^k(0).}$$
    We want to prove that there exists $N$ such that for every $n\geqslant N$ we have $k_n > 0$. To achieve this, we prove that for sufficiently large $n$ the inequality $k_{n+1} > k_n + 1$ holds.
    Using the definition of $k_n$, and reducing some terms, we transform this inequality into
    $$\E^{n+2}(0) > (1+C)\E^{n+1}(0) + 1,$$
    which is equivalent to the inequality
    $$\lambda e^{\E^{n+1}(0)} - (1+C)\E^{n+1}(0) - 1 > 0.$$
    Since there exists $a$ such that the real function $g(x) = \lambda e^x - (1+C)x -1$ is positive in the interval $(a,\infty)$, the above inequality is satisfied for sufficiently large $n$. Therefore, for $n$ large enough $k_{k+1} > k_n + 1$. Hence, the sequence $k_n$ is strictly increasing in the interval of the form $[N',\infty)$, and $\lim_{x\to +\infty}k_n = +\infty$. Thus, there exists the desired $N$ such that for every $n\geqslant N$ the inequality $k_n > 0$ holds.

    Now, let us prove that for any $D>0$ and $k\in\N$ there exists $N$ such that for every $n\geqslant N$
    \begin{equation}\label{eq:Exp inequality 2}
        Dn^k<\E^{n+1}(0).
    \end{equation}
    There exists $N'$ such that for every $n\geqslant N'$ we have $Dn^k < \lambda e^n$. Thus, it is enough to prove that there exists $N$ such that for every $n\geqslant N$ the inequality $\lambda e^n < \E^{n+1}(0)$ holds. This inequality is equivalent to $e^n < e^{\E^n(0)}$, which is equivalent to $n<  \E^n(0)$. For $n$ large enough $\E^n(0)-\E^{n-1}(0) > 2$, so there exists $N$ such that for every $n\geqslant N$ we have $n< \E^n(0)$.

    Now, it is enough to sum inequalities \eqref{eq:Exp inequality 1} and \eqref{eq:Exp inequality 2} for $C=2B$ and $D=2A$.   
\end{proof}

For $n\in\N$ let
$$r_n = \E^n(0) - 1\quad\textrm{ and }\quad T_n = [r_n, r_{n+1}) \subseteq \R.$$
For $n\in\N$ and $j\in\N$ let
$$\displaystyle{\rho_{j,n} = \E\left(-\frac{\E^{n+1}(0)}{e}\right) e^{j+1}\prod_{k=1}^{j}\E^k(0).}$$

\begin{lem}\label{lem:Limit set 1.5}
    Provided $n$ is large enough, if $z\in \C$ is such that $\Ree(z) < -\E(r_n) + 1$ then for every $j\in\{0, 1, \ldots, n+1\}$ we have $\E^{j+1}(z)\in B(\E^{j}(0),\rho_{j,n})$ and  $\rho_{j,n}<1$. In particular, $\Ree(\E^{n+2}(z))\in T_{n+1}$.
\end{lem}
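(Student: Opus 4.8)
The plan is to argue by induction on $j$, exploiting the fact that a point $z$ with very negative real part is sent by $\E$ into a tiny disc around $0$, and then that the subsequent iterates shadow the orbit of $0$ so closely that they stay within the shrinking discs $B(\E^j(0),\rho_{j,n})$. First I would record the identities that make the definitions transparent. Since $\lambda>0$, each $\E^k(0)$ is a nonnegative real, and a direct computation gives $\E(r_n)=\lambda e^{\E^n(0)-1}=\tfrac1e\,\lambda e^{\E^n(0)}=\tfrac{1}{e}\E^{n+1}(0)$, so the hypothesis reads $\Ree(z)<-\tfrac{1}{e}\E^{n+1}(0)+1$. I would also unwind the product defining $\rho_{j,n}$ into the one-step recursion
\[
\rho_{j+1,n}=e\,\E^{j+1}(0)\,\rho_{j,n},\qquad \rho_{0,n}=e\,\lambda\, e^{-\E(r_n)},
\]
which is the crucial feature that lets the induction close exactly.

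Before running the induction I would establish, as a separate step, that $\rho_{j,n}<1$ simultaneously for all $j\in\{0,1,\ldots,n+1\}$ once $n$ is large. Using $\E^k(0)\leq\E^{n+1}(0)$ and $e^{j+1}\leq e^{n+2}$, the recursion yields the uniform bound $\rho_{j,n}\leq \lambda\, e^{-\E^{n+1}(0)/e}\,e^{n+2}\bigl(\E^{n+1}(0)\bigr)^{n+1}$ for all such $j$. Taking logarithms, the term $-\tfrac1e\E^{n+1}(0)$ is doubly exponential in $n$, while the correction $(n+2)+(n+1)\log\E^{n+1}(0)=(n+2)+(n+1)(\log\lambda+\E^n(0))$ is only linear in $\E^n(0)$; hence $\log\rho_{j,n}\to-\infty$ uniformly in $j\leq n+1$, so $\max_{0\leq j\leq n+1}\rho_{j,n}\to 0$. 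This smallness estimate, balancing the growing product $\prod_{k=1}^{j}\E^k(0)$ against the super-exponentially small prefactor, is the one genuinely delicate point; everything else is mechanical.

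With $\rho_{j,n}<1$ secured, I would carry out the induction. The base case $j=0$ is immediate: $|\E(z)|=\lambda e^{\Ree(z)}<\lambda e^{-\E(r_n)+1}=\rho_{0,n}$, so $\E(z)\in B(0,\rho_{0,n})=B(\E^0(0),\rho_{0,n})$. For the inductive step, writing $w=\E^{j+1}(z)$ and $w_0=\E^j(0)$ with $|w-w_0|<\rho_{j,n}<1$, I would use $|e^{u}-1|\leq |u|e^{|u|}$ with $u=w-w_0$ to get
\[
|\E^{j+2}(z)-\E^{j+1}(0)|=\lambda e^{\Ree(w_0)}\,|e^{w-w_0}-1|< e\,\E^{j+1}(0)\,|w-w_0|< e\,\E^{j+1}(0)\,\rho_{j,n}=\rho_{j+1,n},
\]
where $\lambda e^{\Ree(w_0)}=\lambda e^{\E^j(0)}=\E^{j+1}(0)$ since $\E^j(0)$ is real, and the final equality is exactly the recursion. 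This places $\E^{j+2}(z)\in B(\E^{j+1}(0),\rho_{j+1,n})$ and completes the induction for all $j\in\{0,\ldots,n+1\}$.

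Finally, for the "in particular" clause, applying the conclusion at $j=n+1$ gives $|\E^{n+2}(z)-\E^{n+1}(0)|<\rho_{n+1,n}<1$, whence $\Ree(\E^{n+2}(z))\in(\E^{n+1}(0)-1,\E^{n+1}(0)+1)$. Since $r_{n+1}=\E^{n+1}(0)-1$ and $r_{n+2}=\E^{n+2}(0)-1$, and since $\E^{n+2}(0)-\E^{n+1}(0)\geq 2$ for $n$ large, this interval is contained in $[r_{n+1},r_{n+2})=T_{n+1}$, giving $\Ree(\E^{n+2}(z))\in T_{n+1}$ as claimed.
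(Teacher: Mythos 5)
Your proposal is correct and follows essentially the same route as the paper: first establish $\rho_{j,n}<1$ uniformly for $j\leqslant n+1$ by a logarithmic estimate in which $-\E^{n+1}(0)/e$ dominates the polynomially-many factors of the product, then induct using the one-step recursion $\rho_{j+1,n}=e\,\E^{j+1}(0)\rho_{j,n}$, bounding the distortion of $\E$ on the unit-radius disc around $\E^j(0)$. The only cosmetic differences are that you use the pointwise inequality $|e^u-1|\leqslant|u|e^{|u|}$ where the paper bounds $\max|\E'|$ over $B(\E^j(0),\rho_{j,n})$, and your crude bound $\prod_{k=1}^{j}\E^k(0)\leqslant(\E^{n+1}(0))^{n+1}$ replaces the paper's use of $\sum_{k=0}^{n}\E^k(0)\leqslant 2\E^n(0)$; both work.
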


\begin{proof}
    First, we prove that for sufficiently large $n$ we have $\rho_{j,n} < 1$ for each $j\in\{0,1,\ldots, n+1\}$.
    The sequence $(\E^j(0))_{j\in\mathbb{Z}_+}$ is a real increasing sequence, and $\E(0) = \lambda$. Thus, for every $j\geqslant 1$, we have $e\cdot \E^j(0) \geqslant e\cdot\lambda > 1$. Therefore, we only need to prove that $\rho_{n+1,n} < 1$ which is equivalent to $\ln(\rho_{n+1,n}) < 0$. We have
    
    \begin{align*}
        \ln(\rho_{n+1,n}) & = \ln \left(\lambda\right) -\frac{\E^{n+1}(0)}{e} + n+2 + (n+1)\ln(\lambda) + \sum_{k=0}^{n} \E^k(0)\\
        & \leqslant (n+2)(\ln(\lambda) + 1) -\frac{\E^{n+1}(0)}{e} + 2\E^n(0).
    \end{align*}
    In the above calculation we use the inequality $\sum_{k=0}^{n} \E^k(0) \leqslant 2\E^n(0)$ which follows from Lemma \ref{lem:Exp inequality}
    Thus, if 
    $$ (n+2)(1+\ln(\lambda))  + 2\E^n(0) < \frac{\E^{n+1}(0)}{e},$$
    then $\rho_{n+1,n} < 1$, but for $n$ large enough the above inequality holds, which again follows from Lemma \ref{lem:Exp inequality}.
    
    Now, let us move on to the proof of the second part.
    We have $\Ree(\E(z)) < -\E(r_n)+1 = -\E^{n+1}(0)e^{-1} +1$. Thus $\E(z)\in B(0, \rho_{0,n})$, which is our base of induction. Now, assume that for some $j\in \{0,1,\ldots,n\}$ we have $\E^{j+1}(z)\in  B(\E^j(0), \rho_{j,n})$. Then
    \begin{align*}
        |\E(\E^{j+1}(z)) - \E(\E^j(0))| & \leqslant  |\E^{j+1}(z) - \E^j(0)| \cdot \max_{v\in B_{j,n}} |\E(z)|\\
        &= \lambda|\E^{j+1}(z) - \E^j(0)| \cdot\max_{z\in B_{j,n}}e^{\operatorname{Re}(z)},
    \end{align*}
    where $B_{j,n} = B(\E^j(0), \rho_{j,n})$. By the inductive assumption, $|\E^{j+1}(z) - \E^j(0)| < \rho_{j,n}$. We know that $\rho_{j,n} < 1$ for sufficiently large $n$. Thus, for such $n$,
    $$\max_{z\in B_{j,n}}e^{\operatorname{Re}(z)} < e^{\E^j(0) + 1}.$$
    Therefore,
    \begin{align*}
        |\E^{j+2}(z) - \E^{j+1}(0)| & <  \lambda \rho_{j,n} \cdot e^{\E^j(0) + 1} = e\rho_{j,n}\cdot \E^{j+1}(0)\\
        &= \rho_{j+1,n}.
    \end{align*}
    Thus $\E^{j+2}(z)\in B_{j+1,n}$, which completes the proof. At the end, the corollary $\Ree(\E^{n+2}(z))\in T_{n+1}$ follows from the statement just proven applied to $j=n+1$: $\E^{n+2}(z)\in B_{n+1,n} = B(\E^{n+1}(0), \rho_{n+1,n})$.
\end{proof}

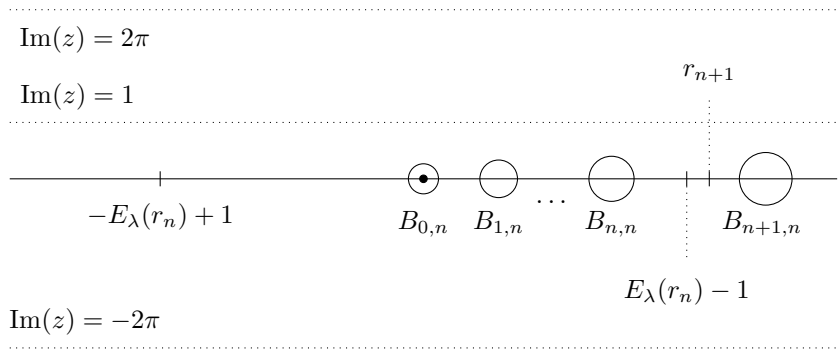
\begin{figure}[h]
        \centering
        \begin{tikzpicture}[font=\small]
    
            \draw [dotted] (-5.5,2.25) -- (5.5,2.25);
            \draw [solid] (-5.5,0) -- (5.5,0);
            \draw [dotted] (-5.5,-2.25) -- (5.5,-2.25);
            \draw [dotted] (-5.5,0.75) -- (5.5,0.75);

            \draw node at (-4.5,1.85) {$\Imm(z)=2\pi$};
            \draw node at (-4.5,-1.9) {$\Imm(z)=-2\pi$};
            \draw node at (-4.6,1.1) {$\Imm(z)=1$};

            \filldraw (0,0) circle (0.05);
            \draw node at (0,-0.6) {$B_{0,n}$};
            \draw node at (1,-0.6) {$B_{1,n}$};
            \draw node at (2.5,-0.6) {$B_{n,n}$};
            \draw node at (4.5,-0.6) {$B_{n+1,n}$};

            \centerarc[solid](0,0)(-180:180:0.2)
            \centerarc[solid](1,0)(-180:180:0.25)
            \centerarc[solid](2.5,0)(-180:180:0.3)
            \centerarc[solid](4.55,0)(-180:180:0.35)

            \draw [solid] (-3.5,0.1) -- (-3.5,-0.1);
            \draw node at (-3.5,-0.5) {$-\E(r_n)+1$};
            \draw [solid] (3.5,0.1) -- (3.5,-0.1);
            \draw [dotted] (3.5,-0.1) -- (3.5,-1.1);
            \draw node at (3.5,-1.5) {$\E(r_n)-1$};
            \draw [solid] (3.8,0.1) -- (3.8,-0.1);
            \draw [dotted] (3.8,0.1) -- (3.8,1.1);
            \draw node at (3.8,1.4) {$r_{n+1}$};
            \draw node at (1.7,-0.3) {$\ldots$};
            
        \end{tikzpicture}
        \caption{The disks $B_{j,n}$, $j\in\{0,1,\ldots,\}$. Their radii are smaller than $1$.} \label{fig:Images of left half-plane}
        \end{figure}
        
\begin{lem}\label{lem:Limit set 2}
    Provided $n$ is large enough, if $z\in \C$ is such that $\Ree(z)\in T_n$ for some $n\in \N$, $\E(z)\in R_k$ for some $k\in\Z$, $\kappa(\E(r_n))$ is $1-$vertical at height $(2|k|+1)\pi$ and $\Ree(\E(z)) < 0$, then for every $j\in\{0, 1, \ldots, n+1\}$ we have $\E^{j+2}(z)\in B(\E^{j}(0),\rho_{j,n})$ and $\rho_{j,n} < 1$. In particular, $\Ree(\E^{n+3}(z))\in T_{n+1}$.
\end{lem}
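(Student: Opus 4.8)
The plan is to reduce the statement to Lemma \ref{lem:Limit set 1.5}. The key observation is that the desired conclusion is exactly the conclusion of Lemma \ref{lem:Limit set 1.5} shifted forward by one iterate: setting $w=\E(z)$, we have $\E^{j+2}(z)=\E^{j+1}(w)$ for every $j$ and $\E^{n+3}(z)=\E^{n+2}(w)$. Thus it suffices to check that $w=\E(z)$ satisfies the hypothesis of Lemma \ref{lem:Limit set 1.5} for the same $n$, namely
$$\Ree(\E(z)) < -\E(r_n)+1,$$
and then to apply that lemma verbatim to $w$. This immediately yields $\E^{j+1}(w)=\E^{j+2}(z)\in B(\E^j(0),\rho_{j,n})$ and $\rho_{j,n}<1$ for $j\in\{0,1,\ldots,n+1\}$, together with the ``in particular'' conclusion $\Ree(\E^{n+2}(w))=\Ree(\E^{n+3}(z))\in T_{n+1}$.

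The main work, therefore, is the displayed inequality, which I would prove by a direct geometric estimate. Since $\Ree(z)\in T_n$ gives $\Ree(z)\geqslant r_n$, we have
$$|\E(z)| = \lambda e^{\Ree(z)} \geqslant \lambda e^{r_n} = \E(r_n).$$
Since $\E(z)\in R_k$, a case check on the strip $R_k$ shows $|\Imm(\E(z))|\leqslant (2|k|+1)\pi$. The hypothesis $\Ree(\E(z))<0$ lets us write $\Ree(\E(z)) = -\sqrt{|\E(z)|^2 - \Imm(\E(z))^2}$. Finally, Remark \ref{rem:Vertical circle} translates the $1$-verticality of $\kappa(\E(r_n))$ at height $(2|k|+1)\pi$ into $((2|k|+1)\pi)^2 \leqslant 2\E(r_n)-1$. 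Combining the three bounds,
$$|\E(z)|^2 - \Imm(\E(z))^2 \geqslant \E(r_n)^2 - ((2|k|+1)\pi)^2 \geqslant \E(r_n)^2 - 2\E(r_n) + 1 = (\E(r_n)-1)^2,$$
so that $\Ree(\E(z)) \leqslant -(\E(r_n)-1) = -\E(r_n)+1$, which is the required estimate. Lemma \ref{lem:Limit set 1.5} then applies to $w=\E(z)$ and delivers the claim.

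I expect the strictness of the inequality $\Ree(\E(z)) < -\E(r_n)+1$ to be the one point needing care, since Lemma \ref{lem:Limit set 1.5} is stated with a strict inequality and open balls. Equality in the chain above forces three boundary conditions simultaneously: $\Ree(z)=r_n$ (the left endpoint of $T_n$), $|\Imm(\E(z))|=(2|k|+1)\pi$ (so $\E(z)$ lies on a strip boundary, i.e.\ on a line $\Imm=$ odd multiple of $\pi$), and tightness of the verticality estimate $\E(r_n)=\tfrac{1}{2}(((2|k|+1)\pi)^2+1)$. As soon as any one of these fails --- in particular whenever $\Ree(z)>r_n$, or $\E(z)$ does not sit on the line $\Imm=(2|k|+1)\pi$ --- the inequality is strict. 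A point with $\Imm(\E(z))$ an odd multiple of $\pi$ and $\Ree(\E(z))<0$ is mapped onto the negative real axis and hence has an itinerary ending in all zeros, which is excluded from $\Omega$; so this degenerate coincidence does not occur for the points to which the lemma is applied, and the reduction to Lemma \ref{lem:Limit set 1.5} goes through as stated.
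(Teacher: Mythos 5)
Your proposal is correct and follows essentially the same route as the paper: the paper likewise reduces to Lemma \ref{lem:Limit set 1.5} by combining $\Ree(z)\geqslant r_n$, the strip membership $\E(z)\in R_k$, the $1$-verticality of $\kappa(\E(r_n))$, and $\Ree(\E(z))<0$ to obtain $\Ree(\E(z))<-\E(r_n)+1$. Your version merely spells out the Pythagorean computation behind the verticality step (and the strictness discussion), which the paper leaves implicit.
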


\begin{proof}       
    We have $\Ree(z) \geqslant r_n = \E^n(0) - 1$ and $\kappa(\E(r_n))$ is $1-$vertical at height $(2|k|+1)\pi$, where $k$ is such that $\E(z)\in R_k$. Thus $|\Ree(\E(z))| > \E(r_n)-1$. We have also $\Ree(\E(z)) < 0$, so $\Ree(\E(z)) < - \E(r_n)-1$. Using Lemma \ref{lem:Limit set 1.5} completes the proof.
\end{proof}

\begin{lem}\label{lem:Limit set 3}
    Provided $n$ is large enough, if $s\in\Sigma_M^p$, $z\in I_s$, $k\in\N$, $\Ree(\E^k(z))\in T_n$ and the circle $\kappa(\E(r_n))$ is $1-$vertical at height $(2M_{k+1} + 1)\pi$, then there exists $m \leqslant n+4$ such that $\Ree(\E^{k+m}(z))\in T_{n+1}$.
\end{lem}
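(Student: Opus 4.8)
The plan is to trace the orbit of $z$ under $\E$ by hand for at most two steps and then hand off to the shadowing Lemmas \ref{lem:Limit set 1.5} and \ref{lem:Limit set 2}. Write $w_j=\E^{k+j}(z)$ and $x_j=\Ree(w_j)$, so $x_0\in T_n$, and since $z\in I_s$ with $s\in\Sigma_M^p$, each $w_j$ lies in $R_{s_{k+j}}$ with $|s_{k+j}|\leqslant M_{k+j}$, hence $|\Imm(w_j)|\leqslant(2M_{k+j}+1)\pi$. The guiding principle is that a point whose modulus is large enough for its circle to be $1$-vertical at the height of its strip (Remark \ref{rem:Vertical circle}) has real part within $1$ of $\pm$ its modulus; I will repeatedly combine this with $|w_{j+1}|=\lambda e^{x_j}=\E(x_j)$ and the relations $\E(r_n)=\E^{n+1}(0)/e$, $\E(r_{n+1})=\E^{n+2}(0)/e$.

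First I examine the step $w_0\mapsto w_1$. From $x_0\in T_n=[r_n,r_{n+1})$ I get $|w_1|=\E(x_0)\geqslant\E(r_n)$, so by the hypothesis and Remark \ref{rem:Vertical circle} the circle $\kappa(|w_1|)$ is $1$-vertical at height $(2M_{k+1}+1)\pi$, whence $|x_1|\geqslant|w_1|-1\geqslant\E(r_n)-1$. If $x_1<0$, then $x_1\leqslant-\E(r_n)+1$, and all hypotheses of Lemma \ref{lem:Limit set 2} hold for $w_0$: the required verticality at height $(2|s_{k+1}|+1)\pi$ follows from the one at $(2M_{k+1}+1)\pi$, since $|s_{k+1}|\leqslant M_{k+1}$ and verticality passes to smaller heights. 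Lemma \ref{lem:Limit set 2} then yields $\Ree(\E^{n+3}(w_0))=x_{n+3}\in T_{n+1}$, so $m=n+3$ works.

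If instead $x_1\geqslant0$, then $\E(r_n)-1\leqslant x_1\leqslant|w_1|<\E(r_{n+1})$; substituting $\E(r_n)=\E^{n+1}(0)/e$ and $\E(r_{n+1})=\E^{n+2}(0)/e$ and comparing with $r_n,r_{n+1},r_{n+2}$, for $n$ large one finds $\E(r_n)-1>r_n$ and $\E(r_{n+1})<r_{n+2}$, so $x_1\in T_n\cup T_{n+1}$. If $x_1\in T_{n+1}$ we are done with $m=1$. Otherwise $x_1$ lies in the top part $[\E(r_n)-1,r_{n+1})$ of $T_n$, and I take one further step: now $|w_2|=\E(x_1)\geqslant\lambda e^{\E(r_n)-1}$ is large enough that $\kappa(|w_2|)$ is $1$-vertical at height $(2M_{k+2}+1)\pi$, so $|x_2|\geqslant|w_2|-1$. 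If $x_2\geqslant0$, then $x_2\geqslant\lambda e^{\E(r_n)-1}-1>r_{n+1}$ while $x_2\leqslant\E(x_1)<\E(r_{n+1})<r_{n+2}$, so $x_2\in T_{n+1}$ and $m=2$; if $x_2<0$, then $x_2<-\E(r_n)+1$, and Lemma \ref{lem:Limit set 1.5} applied to $w_2$ gives $\Ree(\E^{n+2}(w_2))=x_{n+4}\in T_{n+1}$, so $m=n+4$. In every case $m\leqslant n+4$.

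The main obstacle is the bookkeeping of the verticality conditions at the heights $(2M_{k+j}+1)\pi$, which grow with $j$: one must check that the moduli $|w_1|$ and $|w_2|$ are large enough to keep their circles $1$-vertical at the relevant height. This is precisely where the standing hypothesis that $\kappa(\E(r_n))$ is $1$-vertical at $(2M_{k+1}+1)\pi$ enters, as it ties the size of $n$ to $k$ and guarantees, together with "$n$ large enough", that $\E(r_n)$ and the doubly-exponential $\lambda e^{\E(r_n)-1}$ dominate the polynomially growing quantities $M_{k+1}^2$ and $M_{k+2}^2$ governing verticality. The remaining interval comparisons are routine consequences of the rapid growth of the sequence $\bigl(\E^n(0)\bigr)_{n}$.
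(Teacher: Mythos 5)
Your proof is correct and follows essentially the same route as the paper's: a two-step case analysis on the signs of $\Ree(\E^{k+1}(z))$ and $\Ree(\E^{k+2}(z))$, with the positive cases resolved by interval arithmetic (yielding $m=1$ or $m=2$) and the negative cases handed off to the shadowing lemmas (yielding $m=n+3$ or $m=n+4$). The only cosmetic differences are that in the final sub-case you invoke Lemma \ref{lem:Limit set 1.5} directly at $\E^{k+2}(z)$ where the paper invokes Lemma \ref{lem:Limit set 2} at $\E^{k+1}(z)$, and that you re-derive the propagation of $1$-verticality to the height $(2M_{k+2}+1)\pi$ rather than citing Lemma \ref{lem:Const for vertical}; both steps are equivalent to the paper's.
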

\begin{proof}
    Let us consider the point $\E^{k+1}(z)$. There are two cases: $\Ree(\E^{k+1}(z)) < 0$ or $\Ree(\E^{k+1}(z)) > 0$. Suppose that $\Ree(\E^{k+1}(z)) < 0$. For $n$ large enough, according to Lemma \ref{lem:Limit set 2} we have \linebreak $\Ree(\E^{k+n+3}(z))\in T_{n+1}$. Now, let us consider the second case. The circle $\kappa(\E(r_n))$ is $1-$vertical at height $(2M_{k+1} + 1)\pi$. Thus, $\Ree(\E^{k+1}(z)) \geqslant \E(r_n)-1$. We have also $\Ree(\E^{k}(z)) < r_{n+1}$, so for $n$ large enough $\Ree(\E^{k+1}(z)) < r_{n+2}.$ Thus, $\Ree(\E^{k+1}(z))\in T_n$ or $\Ree(\E^{k+1}(z))\in T_{n+1}$. If $\Ree(\E^{k+1}(z))\in T_{n+1}$ then the number $m$ we are looking for is $m=1$. If $\Ree(\E^{k+1}(z))\in T_n$ then, based on Lemma \ref{lem:Const for vertical} and provided $n$ is large enough, the circle $\kappa(\E(\Ree(\E^{k+1}(z))))$ is $1$-vertical at height $(2M_{k+2} + 1)\pi$. Again we have two cases: $\Ree(\E^{k+2}(z)) <0$ or $\Ree(\E^{k+2}(z)) > 0$. Suppose that $\Ree(\E^{k+2}(z)) < 0$. According to Lemma \ref{lem:Limit set 2} we have $\Ree(\E^{k+n+4}(z))\in T_{n+1}$, similarly as in the case considered previously. Now, suppose that $\Ree(\E^{k+2}(z)) > 0$. For $n$ large enough $\E(r_n) > \E^n(0) + 1$ so $\Ree(\E^{k+1}(z)) > \E^n(0)$. Since $\kappa(\E(\Ree(\E^{k+1}(z))))$ is $1$-vertical at height $(2M_{k+2} + 1)\pi$, then $\Ree(\E^{k+2}(z)) > \E(\E^n(0)) - 1 = r_{n+1}$. Thus $\Ree(\E^{k+2}(z))\in T_{n+1}$.
\end{proof}

For $N,m\in\N$ let us define function $g(N,m)$. If $m\geqslant 0$ then 
$$g(N,m) = \sum_{j=0}^{m-1} (N+4+j) = m(N+4) + \frac{m(m-1)}{2}.$$ For $n\in\N$ let $y_n = (2M_n + 1)\pi = (2M + 2np + 1)\pi$.
\begin{lem}\label{lem:Function g}
    There exists $N\in\N$ such that for every $N'\geqslant N$ and $m\in\N$ the circle $\kappa(\E(r_{N'+m}))$ is $1-$vertical at height $y_{g(N',m)+1}$.
\end{lem}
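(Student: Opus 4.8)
The plan is to translate the $1$-vertical condition into a single scalar inequality via Remark \ref{rem:Vertical circle}, and then to exploit the fact that the tower-like growth of $\E^n(0)$ overwhelms the merely polynomial growth of the target heights $y_{g(N',m)+1}$.

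First I would record the two explicit quantities involved. On one hand, since $r_n = \E^n(0) - 1$ and $\E(x) = \lambda e^x$, the relevant radius is
$$\E(r_{N'+m}) = \lambda e^{\E^{N'+m}(0) - 1} = \tfrac{1}{e}\,\E^{N'+m+1}(0).$$
On the other hand, applying Remark \ref{rem:Vertical circle} with $\delta = 1$, the circle $\kappa\bigl(\E(r_{N'+m})\bigr)$ is $1$-vertical at height $y_{g(N',m)+1}$ if and only if
$$\E(r_{N'+m}) \geqslant \frac{y_{g(N',m)+1}^2 + 1}{2},\qquad y_{g(N',m)+1} = \bigl(2M + 2(g(N',m)+1)p + 1\bigr)\pi.$$
Thus the whole statement reduces to establishing this one inequality uniformly over all $N'\geqslant N$ and all $m\in\N$.

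The key device is to pass to the single variable $n = N' + m$, observing that $n \geqslant N' \geqslant N$ and that both $N' \leqslant n$ and $m \leqslant n$. From the closed form $g(N',m) = m(N'+4) + \tfrac{m(m-1)}{2}$ I would derive a crude polynomial bound of the shape $g(N',m) \leqslant 2n^2$, valid once $n$ is large; this step is legitimate precisely because $N'$ and $m$ are both at most $n$, and it is what delivers uniformity in $N'$. Consequently the target height satisfies $y_{g(N',m)+1} \leqslant C_1 n^2$, hence $\tfrac{1}{2}\bigl(y_{g(N',m)+1}^2 + 1\bigr) \leqslant C_2 n^4$ for constants $C_1, C_2$ depending only on $M$ and $p$, once $n$ is large. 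It then suffices to find $N$ with $\tfrac{1}{e}\E^{n+1}(0) \geqslant C_2 n^4$ for every $n \geqslant N$, which is exactly the polynomial-versus-tower comparison supplied by Lemma \ref{lem:Exp inequality} (take $k = 4$ and absorb the constant into $A$, the extra summation term only helping). Combining the bounds yields the desired inequality for all $n \geqslant N$, that is, for all $N' \geqslant N$ and all $m \in \N$.

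The one point needing care — and the only genuine obstacle — is the uniformity over $N'$: a priori $g(N',m)$ depends on $N'$, so one might fear that enlarging $N'$ pushes the threshold up faster than the radius grows. The resolution is structural: for fixed $n = N'+m$, increasing $N'$ only decreases $m$ and hence $g(N',m)$, while increasing $n$ raises the radius $\tfrac{1}{e}\E^{n+1}(0)$ doubly-exponentially. Bounding every quantity by a fixed polynomial in $n = N'+m$ and then invoking $n \geqslant N$ makes a single threshold $N$ work simultaneously for all admissible pairs $(N',m)$.
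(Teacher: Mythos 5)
Your proof is correct, and it takes a genuinely more direct route than the paper's. The paper first reduces to the case $N'=N$ by observing $g(N+1,m-1)<g(N,m)$, then uses Lemma \ref{lem:Const for vertical} to trade the radius $\E(r_{N+m})$ at height $y_{g(N,m)+1}$ for the radius $r_{N+m}$ at height $y_{g(N,m)}$, and finally runs an induction on $m$: expanding $y_{g(N,m+1)}=y_{g(N,m)}+2p\pi(N+4+m)$, using $r_{n+1}\geqslant 2r_n$, and reducing the inductive step to the statement that $r_{N+m}$ dominates an explicit polynomial in $N$ and $m$. You bypass both the reduction and the induction by plugging $\delta=1$ into Remark \ref{rem:Vertical circle}, computing $\E(r_{N'+m})=\frac{1}{e}\E^{N'+m+1}(0)$ explicitly, and bounding the required threshold $\frac{1}{2}\bigl(y_{g(N',m)+1}^2+1\bigr)$ by $C_2 n^4$ with $n=N'+m$ — the crucial observation being that $N'\leqslant n$ and $m\leqslant n$, so $g(N',m)\leqslant 2n^2$ uniformly. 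A single application of Lemma \ref{lem:Exp inequality} with $k=4$ then finishes the argument. Both proofs ultimately rest on the same tower-versus-polynomial comparison; yours applies it once globally, whereas the paper distributes it across the inductive steps. Your version is shorter and makes the uniformity in $N'$ transparent (larger $N'$ at fixed $n$ only shrinks $g$), at the cost of slightly cruder constants, which are irrelevant here.
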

\begin{proof}
    Firstly, let us note that if $m\geqslant 1$ and $\kappa(\E(r_{N+m}))$ is $1-$vertical at height $y_{g(N,m)+1}$, then $\kappa(\E(r_{(N+1)+(m-1)}))$ is $1-$vertical at height $y_{g(N+1,m-1)+1}$. This is because $g(N+1,m-1) < g(N,m)$. Therefore, it is enough to prove that there exists $N\in\N$ such that for every $m\in\N$ the circle $\kappa(\E(r_{N+m}))$ is $1-$vertical at height $y_{g(N,m)+1}$.

    Let us choose $N$ large enough such that for every $x\geqslant r_N$ we have $\E(x)\geqslant xe^q$, where $q$ is the constant with properties following from Lemma \ref{lem:Const for vertical}. Thus, according to Lemma \ref{lem:Const for vertical} it is enough to prove that there exists $N\in\N$ such that for every $m\in\N$ the circle $\kappa(r_{N+m})$ is $1-$vertical at height $y_{g(N,m)}$. 
    Let $N$ be large enough such that $r_N$ is $1-$vertical at height $y_{g(N,0)} = y_0$. Let us use induction. Assume that $r_{N+m}$ is $1$-vertical at height $y_{g_{(N,m)}}$. Thus, according to Remark \ref{rem:Vertical circle}, we have
    $$2r_{N+m} \geqslant y_{g_{(N,m)}}^2 + 1.$$
    We want to show that $r_{N+m+1}$ is $1$-vertical at height $y_{g_{(N,m+1)}}$, so it is enough to show that
    $$2r_{N+m+1} \geqslant y_{g_{(N,m+1)}}^2 + 1.$$
    We have
    $$g_{(N,m+1)} = g_{(N,m)} + (N+4+m).$$
    Thus
    $$y_{g_{(N,m+1)}} = y_{g_{(N,m)}} + 2p\pi(N+4+m).$$
    We apply this equality to the inequality we want to prove:
    \begin{align*}
        2r_{N+m+1} & \geqslant y_{g_{(N,m+1)}}^2 + 1 = \\
        & =  (y_{g_{(N,m)}}^2 + 1) + 4p^2\pi^2(N+4+m)^2 + 4p\pi(N+4+m)y_{g_{(N,m)}}.
    \end{align*}
    Using the inductive assumption, it is enough to prove that
    \begin{align*}
        2r_{N+m+1} - 2 r_{N+m}& \geqslant 4p^2\pi^2(N+4+m)^2 + 4p\pi(N+4+m)y_{g_{(N,m)}}.
    \end{align*}
    For $n$ large enough $r_{n+1}\geqslant 2r_n$ which follows from Lemma \ref{lem:Exp inequality}. Thus, we want to prove that 
    \begin{align*}
        r_{N+m} & \geqslant p^2\pi^2(N+4+m)^2 + \\
        & + 2p\pi^2(N+4+m)(2M+2pm(N+4) + pm(m-1) + 1),
    \end{align*}
    provided $N$ is large enough. The right side of the above inequality is the polynomial of the form
    $$C_0 + C_1 N + C_2 N^2 + C_3m + C_4 m^2 + C_5 m^3 + C_6 Nm + C_7 N^2m + C_8 Nm^2$$
    for some constants $C_j$, $j\in\{0,1,\ldots, 8\}$.
    Since $r_n = \E^n(0) - 1$, the above inequality is true for every $m\in\N$, provided $N$ is large enough.
    This completes the proof of the lemma.
\end{proof}

\begin{prop}\label{prop:Omega-limit set}
    Let $s\in\Sigma_M^p$. There exists $x>0$ such that for every $z\in I_s\setminus\gamma_s$ satisfying $\Ree(z) \geqslant x$ the $\omega$-limit set of $z$ is equal to $\Orb(0)\cup\{\infty\}$.
\end{prop}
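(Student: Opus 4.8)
The plan is to track, along the orbit of $z$, the index $n$ of the interval $T_n=[r_n,r_{n+1})$ containing $\Ree(\E^k(z))$. First I would fix $x$ large enough that $\Ree(z)\geqslant x$ forces the initial level to be some $N'\geqslant N$, where $N$ is the threshold produced by Lemma \ref{lem:Function g}, and simultaneously to exceed every ``provided $n$ is large enough'' bound in Lemmas \ref{lem:Limit set 1.5}, \ref{lem:Limit set 2} and \ref{lem:Limit set 3}. The engine is Lemma \ref{lem:Limit set 3}: once the real part lies in $T_n$ at time $k$ and the circle $\kappa(\E(r_n))$ is $1$-vertical at height $(2M_{k+1}+1)\pi$, the real part returns to $T_{n+1}$ within at most $n+4$ further steps. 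To guarantee the verticality hypothesis at the start of every level, I would bound the arrival time of level $N'+m$ by the cumulative count $g(N',m)=\sum_{j=0}^{m-1}(N'+4+j)$, so the required height $(2M_{k+1}+1)\pi$ is at most $y_{g(N',m)+1}$; since verticality at a larger height is the stronger condition (Remark \ref{rem:Vertical circle}), Lemma \ref{lem:Function g} supplies exactly this. Consequently Lemma \ref{lem:Limit set 3} produces an increasing sequence of checkpoint times $k_{N'}<k_{N'+1}<\cdots$ with $\Ree(\E^{k_n}(z))\in T_n$ and $k_{N'+m}\leqslant g(N',m)$, and in particular the phase index of $\E^m(z)$ tends to $+\infty$ as $m\to+\infty$.

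Next I would isolate the dichotomy hidden in the proof of Lemma \ref{lem:Limit set 3}. At the start of a level-$n$ phase either (Case A) the real part stays positive and the orbit reaches $T_{n+1}$ in one or two steps, or (Case B) some iterate $\E^{k'}(z)$ acquires real part below $-\E(r_n)+1$, after which Lemma \ref{lem:Limit set 1.5} (through Lemma \ref{lem:Limit set 2}) puts $\E^{k'+1+j}(z)\in B(\E^j(0),\rho_{j,n})$ for $j=0,\ldots,n+1$, so the orbit shadows $\E^0(0),\E^1(0),\ldots$ before returning to $T_{n+1}$. The key observation is that if Case B occurred only finitely often, then beyond some time every advancement would be of Case A, the real part would eventually increase without bound, and $\E^k(z)\to\infty$; this would place $z$ on $\gamma_s$, contradicting $z\in I_s\setminus\gamma_s$. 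Hence Case B recurs at infinitely many levels $n\to+\infty$.

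From this I would read off the inclusion $\Orb(0)\cup\{\infty\}\subseteq\omega(z)$. The checkpoints $\E^{k_n}(z)$ have modulus at least $r_n\to+\infty$, so $\infty\in\omega(z)$. For fixed $j_0$, each Case-B phase at a level $n\geqslant j_0$ contributes an iterate within $\rho_{j_0,n}$ of $\E^{j_0}(0)$; since Case B recurs with $n\to+\infty$ and $\rho_{j_0,n}=\lambda e^{-\E^{n+1}(0)/e}e^{j_0+1}\prod_{k=1}^{j_0}\E^k(0)\to0$, we obtain $\E^{j_0}(0)\in\omega(z)$, whence $\Orb(0)\subseteq\omega(z)$.

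For the reverse inclusion I would take a convergent subsequence $\E^{m_i}(z)\to w$ with $w\neq\infty$ and show $w\in\Orb(0)$. Classifying every orbit point of a phase, it is either one of at most two large-positive points in $T_n$, or a single very-negative iterate (both of modulus tending to $\infty$ as the phase index grows, hence excluded once $\E^{m_i}(z)$ is bounded), or a shadowing point in some $B(\E^{j_i}(0),\rho_{j_i,n_i})$ with $\rho_{j_i,n_i}<1$. Boundedness of $\E^{m_i}(z)$ together with $\E^j(0)\to\infty$ forces the indices $j_i$ to stay bounded, so a subsequence has $j_i\equiv j^{*}$; since the phase indices $n_i\to+\infty$ give $\rho_{j^{*},n_i}\to0$, we conclude $w=\E^{j^{*}}(0)\in\Orb(0)$. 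The main obstacle, I expect, is the bookkeeping of the first paragraph: matching the linearly growing heights $(2M_{k+1}+1)\pi$ against the doubly-exponential radii $\E(r_n)$ along the entire orbit, so that the verticality hypothesis of Lemma \ref{lem:Limit set 3} is available at every level — this is precisely why the cumulative schedule $g(N',m)$ and Lemma \ref{lem:Function g} are arranged as they are.
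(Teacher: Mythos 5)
Your proposal is correct and follows essentially the same route as the paper's proof: the level-climbing through the intervals $T_n$ via Lemma \ref{lem:Limit set 3} scheduled by $g(N',m)$ and Lemma \ref{lem:Function g}, the use of $z\notin\gamma_s$ to force the ``negative real part'' (shadowing) case to recur at infinitely many levels, Lemma \ref{lem:Limit set 2} with $\rho_{j,n}\to 0$ for the inclusion $\Orb(0)\cup\{\infty\}\subseteq\omega(z)$, and the classification of orbit points for the reverse inclusion. Your write-up is somewhat more explicit than the paper's (notably the Case A/Case B dichotomy and the subsequence extraction in the converse direction), but no new ideas are introduced and none are missing.
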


\begin{proof}
Let $N$ be a number whose existence results from the above lemma. Let $x=r_{N+1}$, and let $z\in I_s\setminus\gamma_s$ be such that $\Ree(z)\geqslant x$. Let $N'\geqslant N + 1$ be such that $\Ree(z)\in T_{N'}$. Combining Lemma \ref{lem:Limit set 3} and Lemma \ref{lem:Function g} we obtain the increasing sequence of natural numbers $(n_j)_{j\in\N}$ such that $\Ree(\E^{n_j}(z))\in T_{N'+j}$. Since $z\notin\gamma_s$ then $\Ree(\E^n(z))\not\to \infty$, 
so we can choose the sequence of $n_j$ such that for infinitely many $n_j$ we have $\Ree(\E^{n_j + 1}(z)) <0$ or $\Ree(\E^{n_j + 2}(z)) <0$. Thus, according to Lemma \ref{lem:Limit set 2}, the set $\Orb(0)\cup \{\infty\}$ is a subset of the $\omega$-limit set of $z$. If $v\in \C\setminus\Orb(0)$, then let $d = \min\{|v - \E^n(0)|: n\in\N\}$. Then, there exists $K\in\N$ such that for every $k\geqslant K$, either $|\Ree(\E^k(z))| > |\Ree(v)| + 1$ or $\E^k(z)\in B(\E^m(0),\frac{d}{2})$ for some $m\in\N$. Thus, $v$ is not the element of the $\omega$-limit set of $z$.
\end{proof}

Let us introduce the following notation. This notation has already been partially used in the proof of Theorem \ref{thm:A}. For the itinerary $s$ of the form 
$$0_{n_0}t_{0}0_{n_1}t_{1}0_{n_2}t_{2}\ldots$$ let $b_j$ be the first non-zero element of the block $t_{j}$ and let $e_j$ be the last non-zero element of the block $t_{j}$. We can assume that $b_j$ is the first element of $t_{j}$ and that $e_j$ is the last element of $t_{j}$.
Let $a_j$ be such that
\begin{equation}\label{eq:ab}
    \sigma^{a_j}(s) = \underbrace{b_j\ldots e_j}_{t_{j}} 0_{n_{j+1}} t_{{j+1}} 0_{n_{j+2}} t_{{j+2}}\ldots
\end{equation}
and let $d_j$ be such that
\begin{equation}\label{eq:de}
    \sigma^{d_j}(s) = e_j 0_{n_{j+1}} t_{{j+1}} 0_{n_{j+2}} t_{{j+2}}\ldots.
\end{equation}
    
\begin{prop}\label{prop:At most one point}
    Let $t_{0}, t_{1}, \ldots$ be a sequence of finite blocks of integers, such that for every $m\in\N$ at least one term in block $t_m$ is non-zero.
    Then, there exists a sequence of natural numbers $(n_q)_{q\in\N}$ such that if
    $$s = 0_{n_0}t_{0}0_{n_1}t_{1}0_{n_2}t_{2}\ldots,$$
   then there exists at most one point in $I_s\setminus\gamma_s$ whose $\omega$-limit set is not a subset of $\Orb(0)\cup\{\infty\}$.
\end{prop}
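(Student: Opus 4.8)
The plan is to argue by contradiction. Suppose $z_1\neq z_2$ are two points of $I_s\setminus\gamma_s$, each with $\omega$-limit set not contained in $\Orb(0)\cup\{\infty\}$; I will show $z_1=z_2$. The first task is to describe the orbit of such a special point. Because $\omega$-limit sets are invariant under $\E$ and $\E^k(z_i)\in I_{\sigma^k(s)}\setminus\gamma_{\sigma^k(s)}$ with $\sigma^k(s)\in\Sigma_{M_k}^p$, applying Proposition \ref{prop:Omega-limit set} to the shifted itineraries shows that the real part of the orbit of each $z_i$ can never climb past the threshold attached to its position: were it to do so, the $\omega$-limit set would equal $\Orb(0)\cup\{\infty\}$. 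In particular, during each sufficiently long block of zeros $0_{n_j}$ the orbit can neither run off to large real part -- the escaping behaviour of $\gamma_s$ and of the climbing mechanism of Lemmas \ref{lem:Limit set 3} and \ref{lem:Function g} -- nor, at every return, drop into the negative half-plane and undergo the full $\Orb(0)$-shadowing of Lemma \ref{lem:Limit set 2}; the only way to remain in $R_0$ throughout such a long block is for the orbit to track the repelling fixed points $q_+,q_-$ of $\E$ in $R_0$.

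The second step is to turn this into a quantitative estimate. Reading the passage through a block of zeros as iteration of the inverse branch $L_{\lambda,0}$, and using that $L_{\lambda,0}$ is contracting near its two fixed points $q_+,q_-$ with multiplier $|\E'(q_\pm)|^{-1}=|q_\pm|^{-1}<1$ (at a fixed point $\E'(q)=\E(q)=q$, so the repelling multiplier of $\E$ is $|q_\pm|=:\mu>1$), I obtain that the point $\E^{\tau_j}(z_i)$ at which the $j$-th block of zeros begins lies within $C\mu^{-n_j}$ of $\{q_+,q_-\}$. Since $\lambda$ is real one has $q_-=\overline{q_+}$, and the side of $R_0$ from which the orbit enters the block -- hence which of $q_\pm$ it tracks -- will be seen to be fixed by the itinerary, and so is the same for $z_1$ and $z_2$. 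Consequently
$$\bigl|\E^{\tau_j}(z_1)-\E^{\tau_j}(z_2)\bigr|\leqslant 2C\mu^{-n_j},$$
which tends to $0$ as $j\to\infty$.

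The final step is to pull this back to time $0$. Using Lemma \ref{lem:Mis} to bound $|(\E^{\tau_j})'|$ from below on a disc of definite radius about $\E^{\tau_j}(z_i)$, and Koebe's theorem (Theorem \ref{thm:Koebe}) to control the distortion, the inverse branch of $\E^{\tau_j}$ selected by $s$ contracts, giving
$$|z_1-z_2|\leqslant C'\,\frac{\bigl|\E^{\tau_j}(z_1)-\E^{\tau_j}(z_2)\bigr|}{|(\E^{\tau_j})'(\xi_j)|}$$
for a suitable $\xi_j$ on the pullback; letting $j\to\infty$ forces $z_1=z_2$, the desired contradiction.

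The hard part will be this last estimate. A special orbit may still make excursions into the negative half-plane between blocks, where $|\E'|$ is small, so the inverse-branch distortion is not automatically bounded; one must choose the pullback discs and the times $\tau_j$ so that the exponential gain $\mu^{-n_j}$ from the fixed-point tracking overwhelms any loss of derivative, precisely the kind of Koebe-plus-Lemma \ref{lem:Mis} bookkeeping already carried out in the proof of Lemma \ref{lem:Continuum 4}. A second delicate point is to justify rigorously, via the quantitative forms of Lemmas \ref{lem:Limit set 2} and \ref{lem:Limit set 3}, that a special orbit tracks one and the same fixed point throughout each long block and enters consecutive blocks on the itinerary-determined side, so that the two orbits are genuinely close at the times $\tau_j$ rather than sitting near opposite fixed points.
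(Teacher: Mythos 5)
Your outline is not wrong in spirit, but it leaves open exactly the two steps that carry the weight, and it routes the argument through machinery that the paper's proof shows to be unnecessary for uniqueness. The paper does not use fixed-point tracking here at all. It shows, via Proposition \ref{prop:Omega-limit set} together with a case analysis (and after padding every zero block with $N+2$ extra zeros so that the relevant shifts of $s$ lie back in $\Sigma_M^p$), that any point whose $\omega$-limit set is not contained in $\Orb(0)\cup\{\infty\}$ must satisfy $|\Ree(\E^{d_j}(z))|\leqslant c$ for \emph{every} $j$, i.e.\ $\E^{d_j}(z)$ lies in a fixed bounded rectangle $D_{e_j}\subseteq\overline{R_{e_j}}$ at the times $d_j$ of the last non-zero entries. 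Since $e_j\neq 0$ forces $|\Imm|\geqslant\pi$ on $D_{e_j}$, Lemma \ref{lem:Mis} gives $|\Phi_j'|\leqslant\frac1\pi$ for each inverse branch $\Phi_j$ of $\E^{d_j-d_{j-1}}$, so the nested compacta $\Phi_0\circ\cdots\circ\Phi_j(D_{e_j})$ containing all such points have diameters $\leqslant D\pi^{-(j+1)}\to 0$. Mere boundedness at the times $d_j$ plus this uniform contraction finishes the proof; no Koebe distortion, no hyperbolic contraction towards $q_\pm$, and no exponential closeness are needed.

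Measured against this, your proposal has two genuine gaps. First, the confinement step: you invoke Proposition \ref{prop:Omega-limit set} for the shifted itineraries $\sigma^k(s)\in\Sigma_{M_k}^p$, but the threshold $x$ there depends on the parameters and so grows with $k$; to get a \emph{uniform} bound one must apply the proposition only at times where the shifted itinerary returns to $\Sigma_M^p$, and one must separately rule out the alternative that the orbit drops far left and shadows $\Orb(0)$ through the zero block — this requires the padding-by-$N+2$-zeros device and the $a,b$ case analysis, which you only gesture at. Second, the pullback estimate that you yourself flag as ``the hard part'' is genuinely problematic at your chosen reference times $\tau_j$: there the orbit sits near $q_\pm$, and $|\Imm(q_\pm)|\to 0$ as $\lambda\downarrow\frac1e$, so Lemma \ref{lem:Mis} does not yield a per-stage contraction, and beating the accumulated derivative loss by the gain $\mu^{-n_j}$ forces yet another layer of bookkeeping (with constants from Lemma \ref{lem:Bus contraction 2} that themselves depend on $j$). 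Both difficulties dissolve if you contract at the times $d_j$ instead. Incidentally, your claim that both orbits track the \emph{same} fixed point is correct — $\E$ preserves the upper/lower half of $R_0$ among points remaining in $R_0$, so the tracked fixed point is determined by the sign of the first non-zero entry after the block — but you neither prove it nor need it. As written, the proposal is an incomplete sketch of a harder proof; the fixed-point analysis you build belongs to the existence half of Theorem \ref{thm:B}, not to this uniqueness statement.
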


\begin{proof}
    Let the blocks of zeros in the itinerary $s$ be large enough that this itinerary has the properties proven in the proof of Theorem \ref{thm:A}. Therefore, the itinerary $s$ is an element of the set $\Sigma_M^p$ for some $M$ and $p$. Let $x$ be a real number given by Proposition \ref{prop:Omega-limit set}. Therefore, if for $z\in I_s\setminus\gamma_s$ there exists $n\in\N$ such that $\sigma^n(s)\in\Sigma_M^p$ and $\Ree(\E^n(z))\geqslant x$, then $\omega$-limit set of $z$ is equal to $\Orb(0)\cup\{\infty\}$. Let $N$ be the smallest natural number such that $\E^N(0) - 1 > x $. The itinerary $s$ is of the form 
    $$s = 0_{n_0}t_{0}0_{n_1}t_{1}0_{n_2}t_{2}\ldots.$$
    The blocks of zeros in the itinerary $s$ are large enough that for every $j\in\N$ the itinerary 
    $$0_{n_j}t_{j}0_{n_{j+1}}t_{{j+1}}\ldots$$
    is an element of $\Sigma_M^p$. Let us modify every block of zeros by adding $N+2$ zeros to it. The itinerary $s$ is still of the form 
    $$s=0_{n_0}t_{0}0_{n_{1}}t_{{1}}\ldots.$$
    For every $j\in\N$ and every $l\in\{1,2,\ldots,N+2\}$ we have $\sigma^{d_j+1+l}(s)\in\Sigma_M^p$. Let $b>0$ be real number such that if $\Ree(w) < -b$ then $|\E^{N+1}(w) - \E^N(0)| < 1$. Thus, if  $\Ree(w) < -b$ then $\Ree(\E^{N+1}(w)) > x$. Let $a>0$ be a real number such that the circle $\kappa(\E(a))$ intersects the strip $R_0$ at points with a real value greater than $\max\{x,b\}$ or less than $-\max\{x,b\}$. Suppose that for some $z$ there exists $j\in\N$ such that $|\Ree(\E^{d_j}(z))|\geqslant\max\{a,b\}$. Then, there are two cases: $\Ree(\E^{d_j}(z))<-\max\{a,b\}$ or $\Ree(\E^{d_j}(z)) > \max\{a,b\}$. In the first case, we have $|\E^{d_j+N+1}(z) - \E^N(0)| < 1$, so $\Ree(\E^{d_j+N+1}(z)) > x $. But the itinerary $S(\E^{d_j+N+1}(z))\in\Sigma_M^p$. Thus, $\omega$-limit set of $z$ is equal to $\Orb(0)\cup\{\infty\}$. In the second case, we have $\Ree(\E^{d_j+1}(z)) > \max\{x,b\} \geqslant x$ or $\Ree(\E^{d_j+1}(z)) < -\max\{x,b\} \leqslant -b$. If $\Ree(\E^{d_j+1}(z)) > x$, then $\omega$-limit set of $z$ is equal to $\Orb(0)\cup\{\infty\}$, because $S(\E^{d_j+1}(z))\in\Sigma_M^p$. If $\Ree(\E^{d_j+1}(z)) < -b$ then $\Ree(\E^{d_j+N+2}(z))> x$, and again $\omega$-limit set of $z$ is equal to $\Orb(0)\cup\{\infty\}$, because $S(\E^{d_j+N+2}(z))\in\Sigma_M^p$. Therefore, if there exists $z$ whose $\omega$-limit set is not a subset of $\Orb(0)\cup\{\infty\}$ then for every $j\in\N$ we have $|\Ree(\E^{d_j}(z))| \leqslant c = \max\{a,b\}$. Let us denote by $G$ the set of every $z$ whose $\omega$-limit set is not a subset of $\Orb(0)\cup\{\infty\}$. Let us assume that $G$ is non-empty. Let $D_k = \{z\in \overline{R_k}: \Ree(z) \leqslant c\}$ for $k\in\Z$.
    For every $j\in\N$, and every $z\in G$ we have $\E^{d_j}(z)\in D_{e_j}$. Let $D$ be the diameter of every rectangle $D_k$. Let us represent the map $\E^{d_j}$ as 
    $$\E^{d_j} = \E^{d_{j} - d_{j-1}} \circ \E^{d_{j-1} - d_{j-2}} \circ \ldots \E^{d_{1} - d_{0}}\circ\E^{d_{0}}.$$
    For $j\geqslant 1$, let $\Phi_j$ be the inverse function of $\E^{d_j-d_{j-1}}$ which correspond with itinerary $s$ ($\Phi_j$ is a composition of functions $L_{\lambda,q}$ for a proper choice of indexes $q$), and let $\Phi_0$ be the inverse function of $\E^{d_0}$. Then 
    $$\displaystyle{G \subseteq \bigcap_{j\in\N} \Phi_0\circ \ldots \circ\Phi_j (D_{e_j})}.$$
    According to Lemma \ref{lem:Mis}, for every $j\in\N$ we have $|\Phi_j'| \leqslant \frac{1}{\pi}$ on $\overline{R_{e_j}}$. Thus, the diameter of $\Phi_0\circ \ldots \Phi_j (D_{e_j})$ is not greater than $\frac{D}{\pi^{j+1}}$. Therefore, the family of sets $\{\Phi_0\circ \ldots \circ\Phi_j (D_{e_j})\}_{j\in\N}$ is a decreasing sequence of non-empty, compact sets in $\C$ with diameters tending to $0$. Therefore, $G$ consists of only one element.
\end{proof}

\begin{definition}
        Let $n\in\N$. Let 
        $$\displaystyle{A^{+}(n) = \{z\in\C: |\Ree(z)| \leqslant\E(r_n)-1\textrm{, }\Imm(z)\in[0,\pi]\}\setminus \bigcup_{k=1}^{n+1}\E^k(H(n)),}$$
        where $H(n)=\{z\in\C:\Ree(z) < -\E(r_n)+1\}$. Similarly, let  
        $$\displaystyle{A^{-}(n) = \{z\in\C: |\Ree(z)| \leqslant\E(r_n)-1\textrm{, }\Imm(z)\in[-\pi,0]\}\setminus \bigcup_{k=1}^{n+1}\E^k(H(n)).}$$
    \end{definition}
    Both sets, $A^{+}$ and $A^{-}$, are closed. Now, we present a series of lemmas about $A^+(n)$. Analogous properties hold for $A^-(n)$.

    \begin{lem}\label{lem: Bus contraction}
       For every $n\in\N$ large enough there exists $m$ such that $L_{\lambda,0}^m(A^+(n))\subseteq \intt A^+(n)$.
    \end{lem}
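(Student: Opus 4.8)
The plan is to show that $L_{\lambda,0}$ maps $A^+(n)$ into itself and that the nested intersection $\bigcap_{m\geqslant 0}L_{\lambda,0}^m(A^+(n))$ avoids the boundary $\partial A^+(n)$; a compactness argument then produces the required $m$. Throughout I regard $L_{\lambda,0}$ as defined on the closed upper half-plane minus the origin by taking the argument in $[0,\pi]$, which is continuous there and agrees with the branch $L_{\lambda,0}$ on the open upper half-plane. Note that $A^+(n)$ lies in this set: it is contained in $\{0\leqslant\Imm(z)\leqslant\pi\}$, and the hole $\E(H(n))=B(0,\rho_{0,n})$ with $\rho_{0,n}=e\lambda e^{-\E(r_n)}$ removes a neighbourhood of $0$. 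I also use repeatedly that $\E(r_n)=\E^{n+1}(0)/e$ and, from Lemma \ref{lem:Limit set 1.5}, that $\E^{k}(H(n))\subseteq B(\E^{k-1}(0),\rho_{k-1,n})$ with $\rho_{k-1,n}<1$ for $1\leqslant k\leqslant n+2$ once $n$ is large.

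First I would prove the invariance $L_{\lambda,0}(A^+(n))\subseteq A^+(n)$. For $z\in A^+(n)$ one has $\Imm(L_{\lambda,0}(z))=\arg(z)\in[0,\pi]$, so the imaginary constraint is preserved. For the real part $\Ree(L_{\lambda,0}(z))=\ln|z|-\ln\lambda$: the bound $|z|\leqslant\sqrt{(\E(r_n)-1)^2+\pi^2}$ gives $\Ree(L_{\lambda,0}(z))\leqslant r_n<\E(r_n)-1$ for large $n$, while $|z|\geqslant\rho_{0,n}$ (as $z\notin B(0,\rho_{0,n})$) gives exactly $\Ree(L_{\lambda,0}(z))\geqslant\ln\rho_{0,n}-\ln\lambda=-(\E(r_n)-1)$. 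To see that $L_{\lambda,0}(z)$ avoids every hole, suppose $L_{\lambda,0}(z)\in\E^{k}(H(n))$ for some $1\leqslant k\leqslant n+1$; applying $\E$ and using $\E\circ L_{\lambda,0}=\mathrm{id}$ gives $z\in\E^{k+1}(H(n))$. For $k\leqslant n$ this contradicts $z\in A^+(n)$, and for $k=n+1$ it gives $z\in\E^{n+2}(H(n))\subseteq B(\E^{n+1}(0),\rho_{n+1,n})$, a disc whose points have real part exceeding $\E^{n+1}(0)-1=e\,\E(r_n)-1>\E(r_n)-1$, hence lying outside the rectangle containing $A^+(n)$ — again a contradiction. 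Thus $L_{\lambda,0}(A^+(n))\subseteq A^+(n)$, and by iteration $L_{\lambda,0}^m(A^+(n))\subseteq A^+(n)$ for all $m$.

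Next I would show that no point of $\partial A^+(n)$ has its entire forward $\E$-orbit inside $A^+(n)$. The boundary consists of the horizontal edges $\Imm(z)\in\{0,\pi\}$, the vertical edges $|\Ree(z)|=\E(r_n)-1$, and the hole boundaries $\partial\E^{k}(H(n))$. On a horizontal edge the orbit becomes real within one step (for $\Imm(z)=\pi$ the image is a negative real, whose image is a small positive real), and since $\lambda e^{x}=x$ has no real solution for $\lambda>\frac{1}{e}$, every real orbit increases to $+\infty$ and leaves the rectangle. On the right edge $|\E(z)|=\lambda e^{\E(r_n)-1}$ is enormous, and since $\cos$ and $\sin$ cannot both be small, either $\Ree(\E(z))$ or $\Imm(\E(z))$ exceeds the bounds of $A^+(n)$, so $\E(z)\notin A^+(n)$. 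On the left edge and on the hole boundaries the point lies in $\overline{\E^{k}(H(n))}$ for some $k\geqslant 0$ (the left edge being $z\in\overline{H(n)}$, i.e.\ $k=0$), so by Lemma \ref{lem:Limit set 1.5} and continuity $\E^{j}(z)\in\overline{B(\E^{k+j-1}(0),\rho_{k+j-1,n})}$ as long as $k+j\leqslant n+2$; taking $j$ with $k+j=n+2$ places the image near $\E^{n+1}(0)$, outside the rectangle. Hence every boundary point is expelled from $A^+(n)$ in finitely many steps.

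Finally, set $\Lambda=\bigcap_{m\geqslant 0}L_{\lambda,0}^m(A^+(n))$, which by the invariance is a nested intersection of non-empty compacta equal to the set of points whose full forward orbit lies in $A^+(n)$. The previous step gives $\Lambda\cap\partial A^+(n)=\varnothing$, so the compact sets $L_{\lambda,0}^m(A^+(n))\cap\partial A^+(n)$ form a decreasing family with empty intersection; by the finite intersection property one of them is empty, say for $m=m_0$. Since $L_{\lambda,0}^{m_0}(A^+(n))\subseteq A^+(n)$ and meets no boundary point, it lies in $\intt A^+(n)$, as required. (For intuition $\Lambda=\{q_+\}$, since $q_+$ is the attracting fixed point of $L_{\lambda,0}$ in the upper half-plane and lies in $\intt A^+(n)$ for large $n$, but this identification is not needed.) I expect the main obstacle to be the boundary-escape analysis — in particular the bookkeeping showing that points near the holes and the left edge shadow the orbit of $0$ via Lemma \ref{lem:Limit set 1.5} and are expelled after roughly $n$ iterates — together with handling the branch cut and the hole combinatorics cleanly in the invariance step.
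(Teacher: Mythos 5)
Your proof is correct and follows essentially the same route as the paper: establish that $L_{\lambda,0}$ maps $A^+(n)$ into itself (the paper phrases this equivalently as $A^+(n)\subseteq \E(A^+(n))$, with the same computations for the outer radius and for $\E^{n+2}(H(n))$ via Lemma \ref{lem:Limit set 1.5}), then show each type of boundary point is expelled under forward iteration of $\E$, and conclude by compactness. The only difference is that you spell out the finite-intersection-property step that the paper leaves implicit, which is a welcome clarification rather than a deviation.
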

    \begin{proof}
        Firstly, let us prove that, provided $n$ is large enough, we have $A^+(n) \subseteq \E(A^+(n)).$ The image of $A^+(n)$ under $\E$ is given by
        $$\E(A^+(n)) = \overline{B(0,\E(\E(r_n)-1))} \setminus \bigcup_{k=1}^{n+2}\E^k(H(n)).$$

        \begin{figure}[h]
        \centering
        \begin{tikzpicture}[font=\small]
    
            \draw [dotted] (-5.5,1) -- (-3.5,1);
            \draw [dotted] (3.5,1) -- (5.5,1);
            \draw [dotted] (-5.5,0) -- (-5,0);
            \draw [dotted] (5,0) -- (5.5,0);

            \filldraw (0,0) circle (0.05);
            \draw node at (0,-0.25) {$0$};
            \draw node at (-3.5,-0.75){$-\E(r_n)+1$};
            \draw node at (3.5,-0.75){$\E(r_n)-1$};

            \draw node at (-1,0.5){$A^+(n)$};
            \draw node at (-1,2.5){$\E(A^+(n))$};
    
            \centerarc[dashed](0,0)(0:180:5)
            \draw[dashed] (-5,0) -- (-3.5,0);
            \draw[dashed] (3.5,0) -- (3.95,0);
            \draw[dashed] (4.65,0) -- (5,0);

            \draw (-3.5,0) -- (-3.5,1); \draw (-3.5,0) -- (-3.5,1);
            \draw (-3.5,1) -- (3.5,1);
            \draw (3.5,0) -- (3.5,1);

            \draw[dotted] (-3.5,0) -- (-3.5,-0.5);
            \draw[dotted] (3.5,0) -- (3.5,-0.5);

            \draw (-3.5,0) -- (-0.2,0);
            \centerarc[solid](0,0)(0:180:0.2)
            \draw (0.2,0) -- (0.75,0);
            \centerarc[solid](1,0)(0:180:0.25)
            \draw (1.25,0) -- (2.2,0);
            \centerarc[solid](2.5,0)(0:180:0.3)
            \draw (2.8,0) -- (3.5,0);
            \centerarc[dashed](4.3,0)(0:180:0.35)
            
        \end{tikzpicture}
        \caption{The set of $A^+(n)$ and $\E(A^+(n))$.} \label{fig:Bus}
        \end{figure}
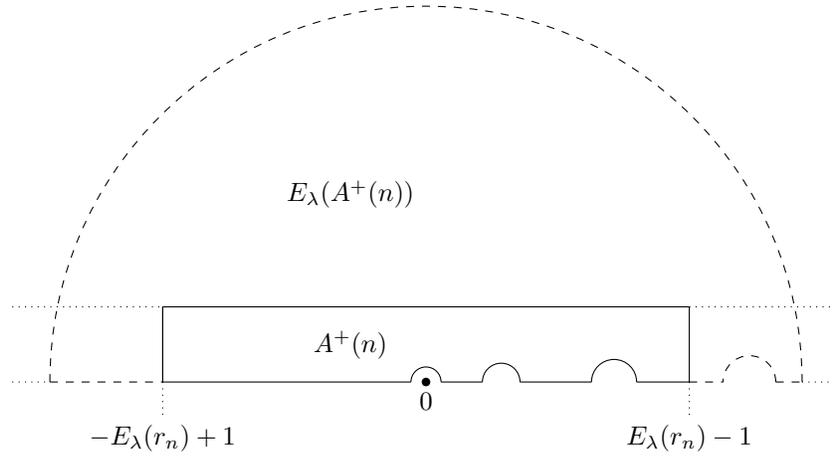
        Provided $n$ is large enough, $\E(\E(r_n)-1) > \E(r_n)-1 + \pi$ so 
        $$A^+(n)\subseteq B(0,\E(\E(r_n)-1)).$$
        Therefore, it is enough to show that $\E^{n+2}(H(n))\cap A^+(n) =\varnothing$.
        According to Lemma \ref{lem:Limit set 1.5}, provided $n$ is large enough, the set $\E^{n+2}(H)$ lies in the half-plan $\{z\in\C:\Ree(z)\geqslant r_{n+1}\}$. But $\E(r_n) - 1 < r_{n+1}$. Thus $A^+(n)\subseteq\E(A^+(n))$. Therefore, for every $m$ we have $L_{\lambda,0}^m(A^+(n))\subseteq A^+(n)$.

        To prove, that there exists $m$ such that $L_{\lambda,0}^m(A^+(n))\subseteq \intt A^+(n)$ let us suppose that $z\in\partial A^+(n)$. If $\Imm(z)=\pi$ or $\Imm(z) = 0$, then there exists $l$ such that $\E^l(z)\in\R$ and $\E^l(z) > \E(r_n)+1$, so $\E^l(z)\notin A^+(n)$. Therefore, such $z$ can not be an element of $L_{\lambda,0}^m(A^+(n))$ for $m\geqslant l$. If $\Ree(z) = -\E(r_n)-1$ or $z\in\partial \E^k(H(n))$ for $k\in\{0,1,\ldots,n+1\}$, then there exists $l$ such that $\E^l(z)\in\partial\E^{n+2}(H(n))$. According to Lemma \ref{lem:Limit set 1.5} such $z$ is not an element of $L_{\lambda,0}^m(A^+(n))$ for $m\geqslant l$. If $\Ree(z)=\E(r_n)-1$, then $|z| = \E(\E(r_n)-1)$, so $z\notin L_{\lambda,0}^m(A^+(n))$ for $m\geqslant 1$. Therefore, there exists $m$ such that $L_{\lambda,0}^m(A^+(n))\subseteq \intt A^+(n)$.

    \end{proof}

Let $q_+$ and $q_-$ be the repelling fixed points of $\E$ in $R_0$ with positive and negative imaginary parts respectively (see \cite{D}). These points are mentioned in the Theorem \ref{thm:B}.

    \begin{lem}\label{lem:Bus contraction 2}
        Provided $n$ is large enough, for every $j\in\Z_+$ there exists $K$ such that for every $k\geqslant K$ we have $L_{\lambda,0}^{k}(A^+(n))\subseteq B(q_+,\frac{1}{j})$.
    \end{lem}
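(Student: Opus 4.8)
The plan is to exploit that $q_+$ is an \emph{attracting} fixed point of the inverse branch $L_{\lambda,0}$ and that $A^+(n)$ is, by Lemma~\ref{lem: Bus contraction}, an (eventually) trapped region for $L_{\lambda,0}$. Since $q_+$ is a repelling fixed point of $\E$ and $\E'(q_+)=\E(q_+)=q_+$, we have $|\E'(q_+)|=|q_+|>1$, so $|L_{\lambda,0}'(q_+)|=1/|q_+|<1$; thus $q_+$ attracts a neighbourhood under $L_{\lambda,0}$, and $L_{\lambda,0}(q_+)=q_+$ since $q_+\in R_0$ lies off the branch cut. First I would check that for $n$ large the point $q_+$ lies in $U:=\intt A^+(n)$: it has finite real part and $\Imm(q_+)\in(0,\pi)$, while for large $n$ the rectangle defining $A^+(n)$ is huge and the removed sets $\E^k(H(n))$ are contained in discs of radius $<1$ centred on the real axis (Lemma~\ref{lem:Limit set 1.5}), which do not reach $q_+$. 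In particular $U$ is a bounded (hence hyperbolic) domain, and since every point of $U$ has positive imaginary part, $U$ avoids the branch cut $\{\Imm z=0,\ \Ree z\le 0\}$, so $L_{\lambda,0}$ is holomorphic on $U$.

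The engine of the argument is the Schwarz--Pick contraction. Let $m$ be the integer from Lemma~\ref{lem: Bus contraction}, so that $g:=L_{\lambda,0}^m$ maps $A^+(n)$ into $\intt A^+(n)=U$. Because $A^+(n)$ is compact and $g$ maps it into the open set $U$, the compactness of $g(A^+(n))$ gives $g(A^+(n))\Subset U$, and since $U\subseteq A^+(n)$ this yields $g(U)\Subset U$. A holomorphic self-map of a hyperbolic domain whose image is compactly contained in the domain is a strict contraction for the hyperbolic metric of $U$; hence $g$ has a unique fixed point in $U$ and $g^k$ converges, uniformly on compact subsets of $U$ (in particular on $g(A^+(n))$), to that fixed point. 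As $q_+\in U$ is fixed by $L_{\lambda,0}$, the unique fixed point is $q_+$. Therefore $L_{\lambda,0}^{mk}(A^+(n))=g^{k-1}\!\big(g(A^+(n))\big)$ shrinks to $\{q_+\}$ as $k\to\infty$.

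It remains to pass from the subsequence of multiples of $m$ to all iterates. Using that $q_+$ is attracting for $L_{\lambda,0}$, for the given $j$ I would fix a forward-invariant neighbourhood $V\subseteq B(q_+,\frac1j)$ with $L_{\lambda,0}(V)\subseteq V$ (such $V$ exists because $|L_{\lambda,0}'(q_+)|<1$ and $q_+$ lies away from the branch cut). By the previous paragraph there is $k_0$ with $L_{\lambda,0}^{mk_0}(A^+(n))\subseteq V$; setting $K=mk_0$ and writing any $k\ge K$ as $k=mk_0+t$ with $t\ge 0$, forward-invariance of $V$ gives
$$L_{\lambda,0}^{k}(A^+(n))=L_{\lambda,0}^{t}\big(L_{\lambda,0}^{mk_0}(A^+(n))\big)\subseteq L_{\lambda,0}^{t}(V)\subseteq V\subseteq B\!\left(q_+,\tfrac1j\right),$$
as required. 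The statement for $A^-(n)$ and $q_-$ is identical by symmetry.

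The main obstacle is the second paragraph: one must set up the holomorphic-dynamics framework cleanly, i.e.\ verify that $U=\intt A^+(n)$ is a connected hyperbolic domain on which $L_{\lambda,0}$ is holomorphic, and---most importantly---that Lemma~\ref{lem: Bus contraction} really yields the \emph{compact} containment $g(U)\Subset U$ needed for Schwarz--Pick to give a strict contraction rather than mere non-expansion. The care is in the book-keeping around the branch cut: the open set $U$ avoids it, but $\overline U$ meets the negative real axis, so one works on $U$ and argues separately that $g(A^+(n))$ is compact (using the natural upper continuation of $L_{\lambda,0}$ across the slit). Once the strict contraction is in hand, everything after it is routine.
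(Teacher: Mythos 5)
Your proposal is correct and follows essentially the same route as the paper: both use Lemma~\ref{lem: Bus contraction} to get $L_{\lambda,0}^m(A^+(n))\Subset \intt A^+(n)$ and then apply the Schwarz--Pick lemma on the hyperbolic domain $U=\intt A^+(n)$ to obtain a strict contraction whose iterates collapse onto the fixed point $q_+$. The only (immaterial) difference is in handling iterates that are not multiples of $m$: you pass through a forward-invariant neighbourhood of $q_+$, whereas the paper relies on the nestedness of the sets $L_{\lambda,0}^{k}(A^+(n))$.
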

    \begin{proof}
    Let $\rho_U$ be the hyperbolic metric on $U$ (see \cite{CG}). According to Lemma \ref{lem: Bus contraction} there exists $m$ such that $L_{\lambda,0}^m(A^+(n))\subseteq \intt A^+(n)$. Let us temporarily use the notation $f = L_{\lambda,0}^m$, $U=\intt A^+(n)$, $V = \intt f(A^+(n))$. Using Schwarz-Pick lemma (see \cite{CG}), for every $z\in U$ we have 
        $$|f'(z)|\frac{\rho_V(f(z))}{\rho_U(z)}\leqslant 1.$$
    Using again Schwarz-Pick lemma but for inclusion map from $V$ into $U$ we have $\rho_U(f(z)) < \rho_V(f(z))$. The strict inequality results from the fact that $V\neq U$. Therefore, for every $z\in U$ we have 
        $$|f'(z)|_{\rho_U} :=  |f'(z)|\frac{\rho_U(f(z))}{\rho_U(z)} < 1.$$
    The set $L_{\lambda,0}^m(A^+(n))$ is compact, therefore there exists $\mu\in(0,1)$ such that $|f'(z)|_{\rho_U} < \mu$ on $L_{\lambda,0}^m(A^+(n))$. Thus, $\mathrm{diam}(L_{\lambda,0}^{lm}(A^+(n)))\to 0$ with $l\to +\infty$. Therefore, 
    for every $j\in\Z_+$ there exists $N\in\Z_+$ such that for every $k\geqslant Nm$ we have $\mathrm{diam}(L_{\lambda,0}^{k}(A^+(n))) < \frac{1}{j}$. Moreover, the fixed point $q_+$ of $L_{\lambda,0}$ is an element of $L_{\lambda,0}^{k}(A^+(n))$. Therefore, for such $k$ we have $L_{\lambda,0}^{k}(A^+(n))\subseteq B(q_+,\frac{1}{j})$.
    \end{proof}
    \begin{rem}
        The analogous proof shows that, provided $n$ is large enough, for every $j\in\Z_+$ there exists $K$ such that for every $k\geqslant K$ we have $L_{\lambda,0}^k(A^-(n))\subseteq B(q_-,\frac{1}{j})$.
    \end{rem}

\begin{proof}[Proof of Theorem \ref{thm:B}]
    Let us suppose that $c>0$ is large enough that sets $L_{\lambda, j}(B(q_+,1))$ and $L_{\lambda, j}(B(q_-,1))$ are subsets of $\{z\in\C: \Ree(z)\in[-c,c]\}$ for every $j\in\Z$. Such $c$ is independent of $j$ because for some $U\subseteq\C$ we can obtain $L_{\lambda, j_2}(U)$ as a translation of $L_{\lambda, j_1}(U)$ by $2\pi(j_2-j_1)i$.
    Let $D_k = \{z\in \overline{R_k}: \Ree(z) \leqslant c\}$.
    Let $G$ be the set of $z\in I_s$ such that $\E^{d_j}(z)\in D_{e_j}$ for every $j\in\N$, where $d_j$ and $e_j$ are defined by \eqref{eq:ab} and \eqref{eq:de}. 
    
    Firstly, we want to show that $G$ is non-empty. We have
    $$\displaystyle{G = \bigcap_{j\in\N} \Phi_0\circ \ldots \circ \Phi_j (D_{e_j})},$$
    where $\Phi_j$ is the inverse function of $\E^{d_j-d_{j-1}}$ which corresponds with the itinerary $s$, as in the proof of Proposition \ref{prop:At most one point}.
    Every $D_{e_j}$ is a compact set. Therefore, to prove that $G$ is non-empty it is enough to show that $\Phi_0\circ \ldots \circ\Phi_j (D_{e_j})$ is non-empty for every $j\in\N$.
    Let us show that $\Phi_0\circ \ldots \circ\Phi_j (U)$ is non-empty for every non-empty $U\subseteq D_{e_j}$ and for every $j\in\N$.
    For $j=0$ this statement is true because $U$ is non-empty and therefore $\Phi_0(U)$ is non-empty for every non-empty $U\subseteq D_{e_0}$. Let us suppose that for some $j\in\Z_+$ the set $\Phi_0\circ \ldots \circ\Phi_{j-1} (U)$ is non-empty for every non-empty $U\subseteq D_{e_{j-1}}$. Let the block $t_j$ contained in the itinerary $s$ be given by
    $$t_{j} = (b_j\nu_j^0\nu_j^1\ldots \nu_j^{k_j}e_j),$$
    where $b_j\neq 0$.
    The set
    $$C_j = L_{\lambda,b_j}\circ L_{\lambda,\nu_j^1}\circ\ldots\circ L_{\lambda,\nu_j^{k_j}}(D_{e_j})$$
    is a compact subset of $R_{b_j}$. Let us assume that $b_j>0$. Therefore, there exists $n$ large enough that Lemma \ref{lem:Bus contraction 2} holds and that $C_j\subseteq \E(A^+(n))$. Therefore, $L_{\lambda,0}(C_j)\subseteq A^+(n)$. 
    \begin{figure}[h]
        \centering
        \begin{tikzpicture}[font=\small]
    
            \draw [dotted] (-5.5,1) -- (-3.5,1);
            \draw [dotted] (3.5,1) -- (5.5,1);
            \draw [dotted] (-5.5,0) -- (-5,0);
            \draw [dotted] (5,0) -- (5.5,0);

            \draw[dotted] (-5.5,2) -- (5.5,2);
            \draw[dotted] (-5.5,3) -- (5.5,3);

            \filldraw (0,0) circle (0.05);
            \draw node at (0,-0.25) {$0$};

            \draw node at (-1,0.5){$A^+(n)$};
            \draw node at (-1,3.75){$\E(A^+(n))$};
    
            \centerarc[dashed](0,0)(0:180:5)
            \draw[dashed] (-5,0) -- (-3.5,0);
            \draw[dashed] (3.5,0) -- (3.95,0);
            \draw[dashed] (4.65,0) -- (5,0);

            \draw (-3.5,0) -- (-3.5,1); \draw (-3.5,0) -- (-3.5,1);
            \draw (-3.5,1) -- (3.5,1);
            \draw (3.5,0) -- (3.5,1);

            \draw (-3.5,0) -- (-0.2,0);
            \centerarc[solid](0,0)(0:180:0.2)
            \draw (0.2,0) -- (0.75,0);
            \centerarc[solid](1,0)(0:180:0.25)
            \draw (1.25,0) -- (2.2,0);
            \centerarc[solid](2.5,0)(0:180:0.3)
            \draw (2.8,0) -- (3.5,0);
            \centerarc[dashed](4.3,0)(0:180:0.35)

            \draw node at (1,2.5){$C_j$};

            \draw plot [smooth, tension=0.7] coordinates {(1.5,2.5) (1.7, 2.7) (3.5, 2.8) (4.25, 2.4) (4, 2.2) (3, 2.5) (2, 2.3) (1.5,2.5)};
            
        \end{tikzpicture}
        \caption{The sets $C_j$ and $\E(A^+(n))$ for $n$ large enough.} \label{fig:Bus 2}
        \end{figure}
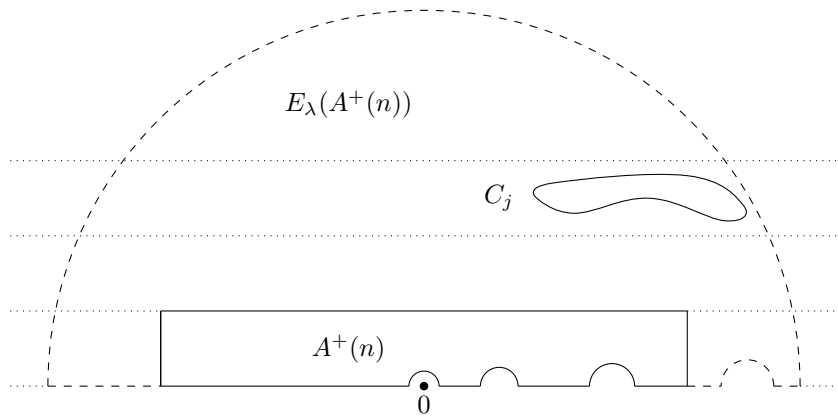
    According to Lemma~\ref{lem:Bus contraction 2} there exists $m_j$ such that $L_{\lambda,0}^{m_j}(C_j)\subseteq B(q_+,\frac{1}{j})$. If $b_j<0$ then, analogously, there exists $m_j$ such that $L_{\lambda,0}^{m_j}(C_j)\subseteq B(q_-,\frac{1}{j})$.
    Therefore, $$\Phi_j(D_{e_j})=L_{\lambda,e_{j-1}}\circ L_{\lambda,0}^{m_j}(C_j)\subseteq D_{e_{j-1}}.$$ 
    Let $V\subseteq D_{e_j}$ be non-empty. Therefore, $$\varnothing \neq \Phi_j(V)\subseteq\Phi(D_{e_j})\subseteq D_{e_{j-1}}.$$
    Therefore, the set $\Phi_0\circ \ldots \circ\Phi_{j-1} (\Phi_{j}(V))$ is non-empty for every non-empty $V\subseteq D_{e_j}$, which ends the inductive part of the proof.
    Therefore, if for every $j\in\N$ the $j$-th block of zeros in $s$ has length at least $m_j$ then
    $G$ is non-empty. 

    By our construction, for every $z\in G$ and every $j\in\Z_+$ the point $\E^{e_{j-1}+1}(z)$ is inside the disc $B(q_+,\frac{1}{j})$ or $B(q_-,\frac{1}{j})$. The disk where the point $\E^{e_{j-1}+1}(z)$ is located depends on the sign of $b_j$. Therefore, at least one of the points $q_+$, $q_-$ is the element of the $\omega$-limit set of $z$. 

    Now, let the blocks of zeros in the itinerary $s$ be large enough that we can use Proposition \ref{prop:At most one point}. Therefore, there exists exactly one point in $z_s\in I_s$ with $q_+$ or $q_-$ in $\omega$-limit set, and for every $z\in I_s\setminus(\gamma_s\cup\{z_s\})$ the $\omega$-limit set is $\Orb(0)\cup\{\infty\}$.
\end{proof}


\bibliographystyle{acm}
\bibliography{biblio.bib}

\end{document}